\numberwithin{equation}{section} 
\DeclareRobustCommand\cyr{%
  \renewcommand\rmdefault{wncyr}%
  \renewcommand\sfdefault{wncyss}%
  \renewcommand\encodingdefault{OT2}%
  \normalfont
  \selectfont}
\DeclareTextFontCommand{\textcyr}{\cyr}
\definecolor{red}{rgb}{.7,0,0}
\definecolor{blue}{rgb}{0,0,1}
\def\mcP{\mathcal{P}}
\def\mcS{\mathcal{S}}
\def\mcT{\mathcal{T}}
\def\mcZ{\mathcal{Z}}
\def\bbD{\mathbb{D}}
\def\bbF{\mathbb{F}}
\def\bbR{\mathbb{R}}
\def\bbZ{\mathbb{Z}}
\def\bbN{\mathbb{N}}
\def\bbI{\mathbb{I}}
\def\bbQ{\mathbb{Q}}
\def\bbC{\mathbb{C}}
\def\bbP{\mathbb{P}}
\def\bbE{\mathbb{E}}
\def\fkc{\mathfrak{c}}
\def\fkc{\mathfrak{c}}
\def\fkf{\mathfrak{f}}
\def\fkx{\mathfrak{x}}
\def\fky{\mathfrak{y}}
\def\fkz{\mathfrak{z}}
\newcommand{\uObD}{{\overline{\mathcal{D}}}\xspace}
\newcommand{\dObD}{{\underline{\mathcal{D}}}\xspace}
\newcommand{\ObL}{\mathcal{L}\xspace}
\newcommand{\ObA}{\mathcal{A}\xspace}
\newcommand\xqed[1]{%
  \leavevmode\unskip\penalty9999 \hbox{}\nobreak\hfill
  \quad\hbox{#1}}
\newcommand{\eproof}{\xqed{\qed}}
\title{A $p$-adic Descartes solver: the Strassman solver}
\author{Josu\'{e} Tonelli-Cueto\thanks{Supported by a postdoctoral fellowship of the 2020 ``Interaction'' program of the Fondation Sciences Mathématiques de Paris. Partially supported by ANR JCJC
GALOP (ANR-17-CE40-0009), the PGMO grant ALMA, and the PHC GRAPE.}\\
Inria Paris \& IMJ-PRG\\ 
Sorbonne Université\\ 
Paris, FRANCE\\
{\tt  josue.tonelli.cueto@bizkaia.eu}
}
\def\th@plain{%
  \thm@notefont{}
  \slshape 
}
\def\th@definition{%
  \thm@notefont{}
  \normalfont 
}
\theoremstyle{plain}
\newtheorem{lem}{Lemma}[section]
\newtheorem{prop}[lem]{Proposition}
\newtheorem{theo}[lem]{Theorem}
\newtheorem{theoM}{Theorem}
\newtheorem{cor}[lem]{Corollary}
\theoremstyle{definition}
\newtheorem{defi}[lem]{Definition}
\theoremstyle{remark}
\newtheorem{problem}{Problem}
\newtheorem{exam}[lem]{Example}
\newtheorem{remark}[lem]{Remark}
\def\bfd{\boldsymbol{d}}
\def\Pd{\mcP_{n,\bfd}}
\def\Oh{\mathcal{O}}
\DeclareMathOperator{\dist}{dist}
\def\Oh{\mathcal{O}}
\def\diff{\mathrm{D}}
\def\newton{\mathrm{N}}
\DeclareMathOperator{\St}{\mathrm{St}}
\DeclareMathOperator{\V}{\mathrm{V}}
\def\enumber{\mathrm{e}}
\def\Qp{\bbQ_p}
\def\Zp{\bbZ_p}
\def\Cp{\bbC_p}
\let\oldnl\nl
\newcommand{\nonl}{\renewcommand{\nl}{\let\nl\oldnl}}
\let\original@algocf@latexcaption\algocf@latexcaption
\long\def\algocf@latexcaption#1[#2]{%
  \@ifundefined{NR@gettitle}{%
    \def\@currentlabelname{#2}%
  }{%
    \NR@gettitle{#2}%
  }%
  \original@algocf@latexcaption{#1}[{#2}]%
}
\begin{document}
\date{}
\maketitle

\begin{abstract}
Solving polynomials is a fundamental computational problem in mathematics. In the real setting, we can use Descartes' rule of signs to efficiently isolate the real roots of a square-free real polynomial. In this paper, we translate this method into the $p$-adic worlds. We show how the $p$-adic analog of Descartes' rule of signs, Strassman's theorem, leads to an algorithm to isolate the roots of a square-free $p$-adic polynomial. Moreover, we show that this algorithm runs in $\mathcal{O}(d^2\log^3d)$-time for a random $p$-adic polynomial of degree $d$. To perform this analysis, we introduce the condition-based complexity framework from real/complex numerical algebraic geometry into $p$-adic numerical algebraic geometry.
\end{abstract}
\section{Introduction}

Analogies and comparison between the real and $p$-adic worlds are both a bless and a curse. On the one hand, it inspires us to translate results from on world to the other; on the other hand, this translation is not always obvious and we might loss the intuition in the translation process. Yet, this allows us to see better how different and how similar these worlds are.

An example of such a translation is fewnomial theory. In the real world, Khovanskii~\cite{fewnomialbook} showed that the number of isolated real zeros of a real polynomial system can be bounded solely in terms of the number of variables and the number of monomial terms of the system. In the $p$-adic worlds, Rojas~\cite{rojas2001} produced an analogous bound for the number of isolated $p$-adic zeros. In both cases, the big open problem is to obtain bounds that are polynomial in the number of monomials~\cite{phillipsonrojas2013}.

Recently, there has been a surge of interest in translating the results from random real algebraic geometry, e.g.~\cite{edelmankostlan1995}, to the $p$-adic world to create a random $p$-adic algebraic geometry~\cite{caruso2021,kulkarnilerario2021} that goes beyond the seminal result of Evans~\cite{evans2006}.

In this paper, we aim to contribute to the translation of random real algebraic geometry into $p$-adic algebraic geometry by translating the condition-based complexity framework from real/complex numerical algebraic geometry to $p$-adic numerical algebraic geometry. We will illustrate this framework with an algorithm based in a $p$-adic analogue of Descartes' rules of signs: Strassman's theorem.

\subsection{Numerical algorithms and condition-based complexity}

In the real/complex world, the complexity of a numerical algorithm is not uniform, the algorithm might need more computational resources---run-time or precision---for some input than for others. The condition-based framework~\cite{conditionbook} (initiated by Turing~\cite{turing1948} and von Neumann and Goldstine~\cite{vonneumanngoldstine1947}) is based on the fact that the computational cost of processing an input depends on the \emph{condition number} of this input, which is a measure of the numerical sensitivity of the input for the problem we are trying to solve. The bigger the condition number of an input is, the bigger the effect of small perturbations of the input in the solution is, and so the bigger the computational resources needed to handle this input are.

In this way, the condition-based framework of complexity aims to understand the complexity of numerical algorithms in terms of the size and condition number of the input. However, although effective for understanding how a numerical algorithm behaves at a particular input, condition-based estimates don't necessarily give an idea of how a numerical algorithm behaves in general. 

To go beyond input-dependent complexity estimate, we randomize the input to study how the algorithm behaves statistically for a random input. This idea, which goes back to Goldstine and von Neumann~\cite{goldstinevonneumann1951}, Demmel~\cite{demmel1987,demmel1988} and Smale~\cite{smale1997}, is the key to transform input-dependent condition-based complexity estimates into input-independent probabilistic ones. Moreover, one can consider the smoothed framework~\cite{spielmanteng2002}, in which we consider an arbitrary input perturbed by random noise, to get a more realistic estimate of the behaviour of an algorithm in practice. 

In the $p$-adic worlds, we can tell the same story as above. Unfortunately, up to the knowledge of the author, there is not an analog condition-based framework. However, there are probabilistic complexity analyses~\cite{caruso2015,caruso2017} for the precision of numerical algorithm in $p$-adic linear algebra, where it is common to consider experiments based on random inputs. This paper fills the gap by illustrating the condition-based framework for a novel algorithm for solving univariate $p$-adic polynomials. We note that the advantage of this complexity framework relies on the fact that for many problems it explains the behaviour of the algorithms better than the worst-case bit-complexity framework, where we bound the worst possible complexity in terms of the bit-size of the input.

In section~\ref{sec:condition}, we introduce condition numbers for solving univariate $p$-adic polynomial in $\Zp$, by adapting the techniques in~\cite{TCTcubeI-journal} (cf.~\cite{TCTcubeI}), which are based in the condition number for solving real polynomial systems~\cite{CKMW1}. In section~\ref{sec:random}, we translate the probabilistic techniques of the real setting to the $p$-adic one. For this, we adapth the techniques of Erg\"ur, Paouris and Rojas~\cite{EPR2018,EPR2019} which are based on~\cite{CKMW3} and geometric functional analysis~\cite{vershyninbook}. In the end, in Section~\ref{sec:complexity}, we show how all these results are applied to analyze the complexity of an algorithm: \nameref{alg:strassman}.

\begin{remark}
In this work, we only give the first steps towards a condition-based complexity framework in the $p$-adic worlds, so we focus on the average complexity analysis. We leave for future work to develop the smoothed probabilistic model, where we consider an arbitrary $p$-adic polynomial perturbed by random noise.
\end{remark}

\subsection{The \nameref{alg:strassman} solver}

The Descartes' rule of signs (see Theorem~\ref{theo:descartesruleofsigns}) allows us to bound the number of real roots of a univariate only in terms of the sign variations of its coefficients. A famous corollary of this is that the number of isolated real roots of a real univariate polynomial is linear in the number of monomials. The latter was generalized to the $p$-adic setting by Lenstra~\cite{lenstra1999}.

Now, the generalization of Lenstra~\cite{lenstra1999} is not a direct generalization of the Descartes' rule of signs, but of its famous corollary. As the sign is the discrete valuation of $\bbR$, we can ask the following: is there a bound on the number of $p$-adic roots of a $p$-adic polynomial that only depends on the $p$-adic valuation of the coefficients? The answer is yes: Strassman's theorem (Theorem~\ref{theo:strassmanbound}).

The analogy between Descartes' rule of signs and Strassman's theorem does not end here. We can establish many parallelisms as we will show in Section~\ref{sec:descartesvsstrassman}. Among the most important one, we have that in the same way that Descartes' rule of signs leads to a univariate solver for the reals, \nameref{alg:descartes}, Strassman's theorem lead to a univariate solver for the $p$-adics: \nameref{alg:strassman}.

Imitating the condition-based analyses for \nameref{alg:descartes} in \cite{TCTcubeI-journal} and \cite{ETCT-descartes}, we provide such an analysis for \nameref{alg:strassman}. We show the following:

\begin{theoM}\label{theo:maintheorem}
Let $\fkf=\sum_{k=0}^d\fkf_kT^k\in\Zp[T]$ be a random $p$-adic polynomial of degree $d$, i.e., the $\fkf$ are independent random $p$-adic variables uniformly distributed in $\Zp$. Then the algorithm \nameref{alg:strassman} finds an approximations of all roots of $\fkf$ in $\Zp$ using $\Oh(d^2\log^3d\log p)$ arithmetic operations on the average. Furthermore, the precision needed by \nameref{alg:strassman} in the average to guarantee correctness is $d+\Oh(1)$.

Moreover, if $p\leq \Oh(d)$, then the average number of arithmetic operations can be reduced to $\Oh(dp)\leq \Oh(d^2)$.
\end{theoM}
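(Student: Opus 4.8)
The plan is to prove Theorem~\ref{theo:maintheorem} by combining three ingredients in the standard condition-based fashion: (1) a worst-case complexity bound for \nameref{alg:strassman} expressed in terms of the size of the input and a condition number $\kappa(\fkf)$ (plus the valuations of the coefficients, or equivalently the Newton polygon of $\fkf$); (2) a probabilistic tail bound showing that for a uniformly random $\fkf\in\Zp[T]$ of degree $d$ the condition number $\kappa(\fkf)$ is small with high probability, with $\bbE\log\kappa(\fkf)=\Oh(\log d)$; and (3) a probabilistic control of the Newton polygon, showing that the number of segments and the slopes behave tamely on average. Concretely, I would first invoke the results of Section~\ref{sec:complexity}: the cost of \nameref{alg:strassman} on input $\fkf$ is bounded by something like $\Oh\big(d^2(\log d+\log\kappa(\fkf))(\log d + \log\kappa(\fkf) + \text{something involving the Newton polygon})\log p\big)$, and the precision required is $d + \Oh(\log\kappa(\fkf))$ or similar. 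This is exactly the $p$-adic analog of what was done for \nameref{alg:descartes} in \cite{TCTcubeI-journal,ETCT-descartes}, so I would cite those worst-case estimates rather than re-derive them.

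The heart of the argument is step (2): bounding $\bbE_\fkf[\log\kappa(\fkf)]$ and, more precisely, $\prob[\kappa(\fkf)\geq t]$. Here I would use the machinery imported in Section~\ref{sec:random} from Erg\"ur--Paouris--Rojas~\cite{EPR2018,EPR2019}: the condition number $\kappa(\fkf)$ is (up to standard manipulations) controlled by how close, in the $p$-adic metric, the coefficient vector $\fkf$ comes to the ``ill-posed'' locus --- the discriminant variety, or more operationally, the set of $\fkf$ for which two roots are $p$-adically close or a root is close to the boundary of $\Zp$. The key probabilistic input is an \emph{anticoncentration} (small-ball) estimate for the uniform measure on $\Zp$: for a fixed nonzero $p$-adic linear-or-polynomial functional, the probability that its value at a uniform random point lands in a ball of radius $p^{-k}$ is $\Oh(p^{-k})$ (or $\Oh(p^{-k}\cdot(\text{degree factor}))$ for polynomial functionals, via the $p$-adic analog of the fact that a degree-$m$ polynomial takes each value at most $m$ times, controlled by Strassman/Weierstrass preparation). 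Integrating the tail $\prob[\kappa(\fkf)\geq p^k]\lesssim \mathrm{poly}(d)\,p^{-k}$ against $k$ gives $\bbE\log_p\kappa(\fkf)=\Oh(\log d)$, hence $\bbE\log\kappa(\fkf)=\Oh(\log d\log p)$ --- wait, more carefully one gets $\bbE\log\kappa = \Oh(\log d)$ in natural log because the $\log p$ cancels against the $p^{-k}$ decay; I would track this constant carefully since it is what makes the final $\log^3 d$ come out clean and free of extra $\log p$ factors beyond the single one from bit-cost of arithmetic mod $p$.

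Next, for step (3), I would show that the expected number of Newton-polygon segments of a random $\fkf$ --- which governs how many subdivisions/factors \nameref{alg:strassman} produces --- is $\Oh(\log d)$ on average (each ``new lower-hull vertex'' event for a uniform coefficient sequence has probability $\Oh(1/k)$ at position $k$, a $p$-adic record-type computation), and that the slopes are $\Oh(\log_p d)$ on average so the total work of handling all segments telescopes. Combining (1)+(2)+(3): plug the high-probability bounds into the worst-case cost estimate, split the expectation into the good event (cost $\Oh(d^2\log^3 d\log p)$) and the bad event (whose probability decays fast enough that its contribution to the expected cost is lower-order), and conclude. For the precision claim, the dominant term is the intrinsic $d$ coming from working with degree-$d$ polynomials over $\Zp$ (one needs $\Omega(d)$ digits just to separate $d$ roots in the worst tame case), and the condition-number correction contributes only $\Oh(\log d)=\Oh(1)$-on-average beyond that --- actually to get the stated ``$d+\Oh(1)$'' I would need $\bbE\log_p\kappa=\Oh(1)$, which does hold since $\bbE\log_p\kappa=\Oh(\log d/\log p)$ is $\Oh(1)$ only if we are slightly more careful, so I would instead argue the precision bound directly via a union bound over the $\leq d$ roots rather than through $\kappa$. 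Finally, the regime $p\leq\Oh(d)$: here Strassman's bound forces at most $\Oh(d)$ roots but more importantly the arithmetic in $\Zp/p^N$ with $N=d+\Oh(1)$ and the combinatorics of the Newton polygon simplify, and a direct count (each of the $\Oh(d)$ root-isolating recursion nodes costs $\Oh(p)$ because branching is $p$-ary but total coefficient size is bounded) gives the $\Oh(dp)$ bound; I would present this as a separate short computation.

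The main obstacle I anticipate is step (2), specifically getting the anticoncentration constant sharp enough that no spurious $\log p$ factors appear in the $d^2\log^3 d$ term --- the naive bound would give $d^2\log^3 d\log^2 p$ or worse, and squeezing out the extra $\log p$ requires the observation that the condition number is \emph{$p$-adically} anticoncentrated with rate matching the measure, so the $\log$ of it has bounded (not $\log p$-growing) expectation in the right units. A secondary subtlety is ensuring the worst-case cost bound from Section~\ref{sec:complexity} depends on $\kappa$ only \emph{logarithmically} (so that a polynomial tail in $\kappa$ integrates to a logarithmic expectation); if the dependence were polynomial in $\kappa$ the whole scheme would fail, so I would make sure to quote the correct form of that estimate.
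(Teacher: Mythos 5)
Your overall framework (worst-case cost in terms of $\kappa(\fkf)$, then integrate against a probabilistic tail bound) is the right one, and it is the one the paper uses, invoking Theorem~\ref{theo:correctness}, Corollaries~\ref{cor:arithmeticcomplexity} and~\ref{cor:estimateprecision}, and the probabilistic estimates of Section~\ref{sec:random}. However, your proposal misidentifies where the dominant complexity term comes from. You anticipate the $d^2\log^3 d$ factor arising from a Descartes-style tree accounting (depth $\times$ width, each $\Oh(\log d)$, with extra logs from arithmetic). That is not what happens here: in Corollary~\ref{cor:arithmeticcomplexity} the $\Oh(d^2\log^3 d\log p)$ term is the average cost of a \emph{single} Cantor--Zassenhaus factorization at the first subdivision step, when the polynomial still has degree $d$. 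After that first step, all subsequent polynomials have degree bounded by $\St(\fkf;n,p^{-1})$, which by Corollary~\ref{cor:strassmanprob3} is $\Oh(1)$ in expectation (not $\Oh(\log d)$), so the rest of the Strassman tree contributes only lower-order cost. Without noticing that the degree collapses after one step, your bookkeeping would give a substantially worse bound. Relatedly, your ``Newton-polygon segment count $\Oh(\log d)$ via a records argument'' is not what controls the width of the tree; the paper bounds $\bbE\sum_{n}\St(\fkf;n,p^{-s})^k$ directly, and the answer is dimension-free.

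A second gap is in your tail bound for the condition number. You anticipate $\prob[\kappa(\fkf)\ge p^k]\lesssim \mathrm{poly}(d)\,p^{-k}$ and then worry about losing $\log p$ factors and about whether $\bbE\log_p\kappa$ is truly $\Oh(1)$. The paper's bound (Theorem~\ref{theo:probcondition} via Proposition~\ref{prop:padicprobability}) is strictly better and simpler: since $f\mapsto (f(x),f'(x))$ extends to a $\mathrm{GL}_{d+1}(\Zp)$-transformation, one gets the clean, dimension-free estimate $\bbP(\kappa(\fkf,x)\ge p^s)\le p^{-2s}$ pointwise and $\bbP(\kappa(\fkf)\ge p^s)\le p^{-s}$ globally, hence $\bbE\ln^k\kappa(\fkf)\le k^k$ with no $d$-dependence at all. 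This removes the need for your proposed union bound over the $\le d$ roots for the precision claim; the $d$ in the $d+\Oh(1)$ precision estimate comes from the initial normalization/truncation step (Corollary~\ref{cor:estimateprecision}), not from root separation. Finally, one technical step you omit: the cost bound is a product of $\log\kappa(\fkf)$ with powers of Strassman counts, which are not independent, and the paper handles this with Cauchy--Schwarz plus the moment bounds $\bbE\ln^{2a}\kappa$ and $\bbE\sum_n\St^{2b}$; your sketch assumes you can ``split into a good event and a bad event,'' which would work but requires the same moment control you would then need to state explicitly.
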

\begin{remark}
By ``approximations of all roots of $\fkf$ in $\Zp$'', we mean that the Newton method---Hensel's lifting---starting at these approximations converge quadratically. A precise convergence statement is given in Proposition~\ref{prop:strassmantoalpha}.
\end{remark}
\begin{remark}
Even though the average precision is $d+\Oh(1)$, one can see from the proofs that this is only needed at the beginning of the algorithm. Afterwards the average precision goes down to $\Oh(1)$. 
\end{remark}

In the precision analysis of \nameref{alg:strassman}, we use a flat precision model~\cite{carusoroevaccon2016} where all numbers involved are written with the same precision. In the future, it would be interesting to see how \nameref{alg:strassman} behave under more sophisticated precision analyses such as those in~\cite{carusoroevaccon2014,carusoroevaccon2016}.

We note that \nameref{alg:strassman} is what we can call a subdivision algorithm. In the real world, these algorithms are quite extensive (see~\cite{yap2019towards}); but they are underexplored compared to the so-called homotopy continuation---used in the solution Smale's 17th problem~\cite{lairez2017}. Breiding~\cite{breiding2013} made an attempt to generalize homotopy continuation methods, but the metric/topological properties of the $p$-adics made such an attempt fail. In contrast, subdivision methods are commonplace in the $p$-adic world~\cite{Dubhashi-phd-92,Loos-rootsQ-83,MalWhi-cell-decomp-99} and also in the related world of prime power rings~\cite{chengaorojas2019,kopprandallrojaszhu2020}. Nevertheless, none of these algorithms seems to use Strassman's theorem as the guiding rule of the subdivision, as \nameref{alg:strassman} does. A notable exception to subdivision-based method in $p$-adic polynomial system solving is \cite{kulkarni2020}, which uses $p$-adic linear algebra but no complexity analysis is provided.

We describe \nameref{alg:strassman} in Section~\ref{sec:descartesvsstrassman}. Then we provide a complexity analysis in Section~\ref{sec:complexity}, using the results in Section~\ref{sec:condition}; which we turn into a probabilistic analysis in Section~\ref{sec:random}.

\subsection[A p-adic Smale's 17th problem]{A $p$-adic Smale's 17th problem}

At the core of the classical Smale's 17th problem~\cite{smale2000}, we have the question of whether a random complex polynomial system can be solved fast? Over non-algebraically closed fields, we don't ask whether we can solve fast, but whether we determine feasibility fast. Given how fruitful Smale's 17th problem was for complex numerical algebraic geometry, we do the same in the $p$-adic setting with the objective of developing the condition-based framework in $p$-adic numerical algebraic geometry.

We state two versions. One for the random model that takes coefficients with respect to the monomial basis and one that it takes coefficients with respect to the binomial basis---considered already by Evans~\cite{evans2006}.

\begin{problem}[$p$-adic Smale's 17th Problem M]
Let $\fkf\in\Zp[X_1,\ldots,X_n]^n$ be a random $p$-adic polynomial system such that
\[\fkf_i=\sum_{|\alpha|\leq d_i}\fkf_{i,\alpha}X^\alpha\]
with the $\fkf_{i,\alpha}$ independent random $p$-adic variable uniformly distributed in $\Zp$. Is there a deterministic algorithm that decides whether or not $\fkf$ has a zero in $\Zp^n$ (resp. $\Qp^n$) in average polynomial-time with respect the number of coefficients?
\end{problem}
\begin{problem}[$p$-adic Smale's 17th Problem B]
Let $\fkf\in\Zp[X_1,\ldots,X_n]^n$ be a random $p$-adic polynomial system such that
\[\fkf_i=\sum_{|\alpha|\leq d_i}\fkc_{i,\alpha}\prod_{j=1}^n\binom{X_j}{\alpha_j}\]
with the $\fkc_{i,\alpha}$ independent random $p$-adic variable uniformly distributed in $\Zp$. Is there a deterministic algorithm that decides whether or not $\fkf$ has a zero in $\Zp^n$ (resp. $\Qp^n$) in average polynomial-time with respect the number of coefficients?
\end{problem}

We note that we can be more ambitious and consider also the sparse version as Rojas and Ye~\cite{rojasye2005} in the real world. We note that the results of \cite{avendanoibrahimrojasrusek2012,bichengrojas2016,rojaszhu2020} impose restrictions for an input that is not random, so the above problems might have a positive solutions. 

\begin{remark}
We note that \nameref{alg:strassman} does not solve Problem 1 for $n=1$, since \nameref{alg:strassman} relies on the Cantor-Zassenhaus factorization algorithm~\cite{cantorzassenhaus1981} which is not deterministic. 
\end{remark}

\subsection[Beyond Qp]{Beyond $\Qp$}

We note that the results in this paper can be generalized to the finite extensions of $\Qp$ in a reasonable way. However, for the sake of avoiding getting unnecessarily technical, we restrict to computations over $\Qp$.

\paragraph{Notation} $\Qp$ will denote the fiel of $p$-adic numbers and $\Zp$ the ring of $p$-adic integers. To denote the norm in them, we will simply use $|~|$. Similarly, we will denote by $\Cp$ the analytic closure of the algebrac closure of $\Qp$, denoting its absolute value also by $|~|$. We will als use $\|~\|$ for the corresponding norm of $p$-adic vectors and polynomials. To denote random variables we will use fraktur letters.

\paragraph{Acknowledgements} The author is grateful to Elias Tsigaridas for various discussions, suggestions and support; to Matías Bender for suggestions; and to Evgenia Lagoda for her constant moral support and Gato Suchen for a critical suggestion regarding the proof of Theorem~\ref{theo:conditionproperties}.

\section{Descartes vs. Strassman}\label{sec:descartesvsstrassman}

Given a real polynomial $f=\sum_{k=0}^df_kT^k\in\bbR[T]$, we can consider the number of signs variations of its list of coefficients:
\begin{equation}
    \V(f):=\{k\in\bbN\mid (f_k\geq 0\text{ and }f_{k+1}<0)\text{ or }(f_k\leq 0\text{ and }f_{k+1}>0)\}.
\end{equation}
Note that a sign change means that the
coefficient changes from 
positive to negative or negative to positive, i.e., we are counting sign changes in sequences where we omit the zeros. The so-called Descartes' rule of signs relates the number of positive roots of $f$ to the number of sign changes in the coefficient list.

\begin{theo}[Descartes' rule of signs]\label{theo:descartesruleofsigns}
Let $f=\sum_{k=0}^df_kT^k\in\bbR[T]$ be a real polynomial. Then
\[
\mcZ(f,\bbR_+)\leq \V(f).
\]
Moreover, we have equality if $\V(f)$ is zero or one.\eproof
\end{theo}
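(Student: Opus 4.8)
The plan is to prove the inequality by isolating a single multiplicative step and iterating it, and to settle the two equality cases by elementary sign/continuity arguments.

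First I would normalize: factoring out the largest power of $T$ dividing $f$ changes neither $\mcZ(f,\bbR_+)$ nor $\V(f)$, so I may assume $f_0\neq 0$ (and $f\neq 0$). The heart of the matter is the claim that for every $h\in\bbR[T]$ and every $r>0$ one has $\V\big((T-r)h\big)\geq \V(h)+1$. Granting this, write $f=g\cdot\prod_{i=1}^{m}(T-r_i)$, where $r_1,\dots,r_m>0$ enumerate the positive roots of $f$ with multiplicity (so $\mcZ(f,\bbR_+)\leq m$) and $g\in\bbR[T]$ has no positive root; applying the claim $m$ times yields $\V(f)\geq \V(g)+m\geq m\geq \mcZ(f,\bbR_+)$, which is the desired bound.

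For the claim, write $q:=(T-r)h=\sum_k q_kT^k$, so that $q_k=h_{k-1}-rh_k$ (with the usual boundary conventions); in particular $q_0=-rh_0$ has sign $-\sign(h_0)$, while the leading coefficient of $q$ has the sign of that of $h$. Partition the exponents into maximal runs on which the nonzero coefficients of $h$ have constant sign, with signs $\epsilon_0,\epsilon_1,\dots,\epsilon_s$ (so $\V(h)=s$). At the exponent where $h$ switches from run $i-1$ to run $i$, the two relevant terms $h_{k-1}$ and $-rh_k$ have the common sign $\epsilon_{i-1}$, so $q$ has sign $\epsilon_{i-1}$ there. Reading $q$ at the checkpoints ``constant term, first switch, $\dots$, $s$-th switch, leading term'' gives the sign pattern $-\epsilon_0,\epsilon_0,\epsilon_1,\dots,\epsilon_{s-1},\epsilon_s$, which has $s+1$ sign changes; since the number of sign changes of a subsequence never exceeds that of the full sequence, $\V(q)\geq s+1$. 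I expect the genuinely delicate point to be the bookkeeping of the zero coefficients of $h$, which obscure exactly which index realizes each ``switch''; this is where I would be careful, either by taking each checkpoint at the first nonzero exponent of the new run, or by perturbing $h$ slightly to make all coefficients nonzero and using that $\V$ cannot decrease under such a perturbation.

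For the ``moreover'' part: if $\V(f)=0$ then, after the normalization, all nonzero coefficients of $f$ share one sign, so $f(x)\neq 0$ for $x>0$ and $\mcZ(f,\bbR_+)=0=\V(f)$. If $\V(f)=1$, the bound already gives $\mcZ(f,\bbR_+)\leq 1$; conversely a single sign change forces $\sign(f_0)$ and $\sign(f_d)$ to be opposite, hence $f(0)$ and $\lim_{x\to+\infty}f(x)$ have opposite signs, and the intermediate value theorem produces a root in $(0,+\infty)$, so $\mcZ(f,\bbR_+)=1=\V(f)$. (An alternative route to the inequality is induction on $\deg f$ via Rolle's theorem, using that $\V(f')\leq \V(f)$ because the sign sequence of the coefficients of $f'$ is that of $f$ with the constant term deleted; but then one must do extra work to kill an apparent off-by-one, so I prefer the multiplicative argument above.)
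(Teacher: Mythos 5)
The paper does not prove this theorem: it is stated as a classical result and closed immediately with a qed mark, so there is no in-paper argument to compare against. I therefore assess your proof on its own terms.

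Your proof is correct and is the standard multiplicative argument. The lemma that $\V\bigl((T-r)h\bigr)\geq\V(h)+1$ for $r>0$ is the classical engine of the proof, and you handle the one genuinely delicate point (zero coefficients of $h$ obscuring the ``switch'' indices) correctly by anchoring each checkpoint at the first nonzero index of the new run: if the run of sign $\epsilon_{i-1}$ ends at index $a$ and the next nonzero coefficient of $h$ is $h_b$ with sign $\epsilon_i$, then $q_b=h_{b-1}-rh_b$ has sign $\epsilon_{i-1}$ whether $b=a+1$ (both terms contribute with that sign) or $b>a+1$ (only $-rh_b$ survives). Together with $q_0=-rh_0$ and the leading coefficient, this gives a subsequence with $s+1$ sign alternations, and since passing to a subsequence cannot increase the sign-change count the lemma follows. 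The equality cases are also handled correctly: $\V(f)=0$ forces a single sign on $(0,\infty)$ after removing the factor $T^\nu$, and $\V(f)=1$ forces $f(0)$ and $\lim_{x\to+\infty}f(x)$ to have opposite signs so the intermediate value theorem produces a positive root, which combined with the upper bound gives exactly one.
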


In particular, the difference between the actual number of positive roots and the
number of sign variations is always an even number.
Moreover, to count the real roots of $f$ in an interval
$I = (a, b)$
we use the transformation $x \mapsto \frac{a T + b}{T + 1}$
that maps $I$ to $(0, \infty)$.
Then
\begin{equation}
\label{eq:I-to-0inf}
	V(f, I) := V( (T + 1)^d f(\tfrac{a T + b}{T + 1}) )
\end{equation}
bounds the number of real roots of $f$ in $I$.

In $p$-adic analysis, there is a theorem with a similar flavour due to Strassman. In this case, the $\infty$-adic valuations play the role of signs. So Strassman's theorem is a $p$-adic analogue of Descartes' rule of signs in the sense that it gives a bound on the number of $p$-adic roots (in $\Zp$) in terms of the $p$-adic valuation of the coefficients.

\begin{theo}[Strassman's theorem]\label{theo:strassmanbound}
Let $f=\sum_{k=0}^df_kT^k\in\Qp[T]$ be a $p$-adic polynomial. Then
\[
\mcZ(f,\Zp)\leq \St(f):=\max\{k\in\bbN\mid\text{for all }l<k,\,|f_l|\leq |f_k|\}.
\]
Moreover, we have equality if $\St(f)$ is zero or one.
\end{theo}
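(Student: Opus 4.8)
The plan is to prove this via the theory of Newton polygons for $p$-adic power series, or more elementarily, by a direct induction on $\St(f)$ using $p$-adic Rolle-type reasoning via division. First I would dispose of trivial cases: if $\St(f) = 0$, then $|f_0| > |f_l|$ for all $l \geq 1$, hence for any $x \in \Zp$ we have $|f_l x^l| \leq |f_l| < |f_0|$ for $l \geq 1$, so by the ultrametric inequality $|f(x)| = |f_0| \neq 0$; thus $\mcZ(f,\Zp) = 0$ and equality holds. This observation — that the term of maximal absolute value dominates the value $f(x)$ for $x \in \Zp$ unless there is a tie — is the engine of the whole argument.

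For the inductive step, suppose $\St(f) = k \geq 1$ and let $n \leq k$ be the largest index with $|f_n| = \max_l |f_l|$; note $|f_l| \leq |f_n|$ for all $l$ and $|f_l| < |f_n|$ for $l > n$ (with $n \leq k$, and in fact for the equality claim when $k=1$ one checks $n=1$ forces $|f_0| \le |f_1|$, $|f_l| < |f_1|$ for $l \ge 2$). If $f$ has no root in $\Zp$, we are done since $0 \leq k$. Otherwise pick a root $\alpha \in \Zp$ and write $f(T) = (T - \alpha) g(T)$ with $g = \sum_{j=0}^{d-1} g_j T^j \in \Qp[T]$. The key computational step is to track how the valuations of the coefficients behave under this division: from $f_j = g_{j-1} - \alpha g_j$ (with the usual edge conventions) and $|\alpha| \leq 1$, I would show that $\max_j |g_j| = \max_j |f_j| = |f_n|$ and, more importantly, that $\St(g) \leq \St(f) - 1 = k - 1$. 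Granting this, the inductive hypothesis gives $\mcZ(g, \Zp) \leq k-1$, and since $\mcZ(f,\Zp) \leq \mcZ(g,\Zp) + 1 \leq k$, we are done.

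The main obstacle is precisely the claim $\St(g) \leq \St(f) - 1$. To see it, let $m = \St(g)$, so $|g_l| \leq |g_m|$ for all $l < m$. I want to conclude $|f_l| \leq |f_{m+1}|$ fails to extend past $m+1$, i.e. that $\St(f) \geq m+1$ would be contradicted — equivalently that $\St(f) \geq m + 1$, giving $m \leq \St(f) - 1$. Using $f_{m+1} = g_m - \alpha g_{m+1}$ and the fact that $|g_m|$ is the running maximum up to index $m$: for $l \leq m$ one has $|f_l| = |g_{l-1} - \alpha g_l| \leq \max(|g_{l-1}|, |g_l|) \leq |g_m|$. If moreover $|g_{m+1}| < |g_m|$ (which holds when $m = \St(g)$ is the \emph{last} index achieving the running max among the first $m+1$ — I would use the maximal choice of $n$-type index here), then $|f_{m+1}| = |g_m - \alpha g_{m+1}| = |g_m| \geq |f_l|$ for all $l \le m$, so $\St(f) \geq m+1$. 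Some care is needed about whether $g_m$ is genuinely the strict running maximum versus merely a maximum, and the clean way to handle this is to define $\St$ via the \emph{largest} index realizing the property (as the statement does) and to carry out the division argument choosing roots so that valuations are controlled; alternatively, one invokes the Newton polygon of $f$ over $\Cp$, where the number of roots with valuation $\geq 0$ is read off as the total horizontal length of the segments of non-positive slope, and $\St(f)$ is exactly the abscissa of the rightmost lower-vertex of slope $\le 0$, making the inequality and the equality-when-$\St(f)\le 1$ transparent. I would likely present the Newton polygon proof as the main line and relegate the elementary induction to a remark, since the Newton polygon also immediately yields the sharpened equality assertion.
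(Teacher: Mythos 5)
Your proposal takes a genuinely different and more self-contained route than the paper. The paper's own proof of this theorem is terse: it cites \cite[Theorem~4.4.6]{gouveabook} for the inequality and defers the equality case to Proposition~\ref{prop:strassmantoalpha}, which packages Hensel's lemma in the language of $p$-adic Smale $\alpha$-theory. Your elementary induction via the factorization $f=(T-\alpha)g$, $\alpha\in\Zp$, is sound: the key claim $\St(g)\leq\St(f)-1$ does hold, and your worry about whether $|g_{m+1}|<|g_m|$ when $m=\St(g)$ resolves cleanly---if instead $|g_{m+1}|\geq|g_m|$, then for every $l\leq m$ one would have $|g_l|\leq|g_m|\leq|g_{m+1}|$, so $m+1$ would also satisfy the defining condition and contradict the maximality of $m$. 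Your alternative via the Newton polygon is essentially the mechanism the paper uses for the sharper Theorem~\ref{theo:strassmannexactforcomplex} (the exact count over $\Cp$), so that line is consistent with the paper's development.

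However, your assertion that the Newton polygon ``immediately yields the sharpened equality assertion'' has a gap when $\St(f)=1$. The Newton polygon counts the roots of $f$ in $\Cp$ with $|z|\leq 1$; it shows there is exactly one such root, but it does not by itself place that root in $\Zp$ rather than in $\Cp\setminus\Zp$. You need one more step to descend: either invoke Hensel's lemma / Newton iteration (which is precisely what the paper's Proposition~\ref{prop:strassmantoalpha} supplies), or observe that the unique root $\zeta\in\Cp$ with $|\zeta|\leq 1$ must be fixed by every $\sigma\in\mathrm{Gal}(\overline{\Qp}/\Qp)$, since the extension of $|\cdot|$ to $\overline{\Qp}$ is Galois-invariant and $\sigma$ permutes the roots preserving absolute values; hence $\zeta\in\Qp$, and then $|\zeta|\leq 1$ forces $\zeta\in\Zp$. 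Note also that the elementary induction you suggest relegating to a remark proves only the inequality and is silent on the equality case, so the ``moreover'' clause genuinely requires either the patched Newton-polygon argument or a Hensel-type lemma, exactly as the paper chooses.
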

\begin{proof}
The inequality is well-known; see for example~\cite[Theorem~4.4.6]{gouveabook}. The second part follows from Proposition~\ref{prop:strassmantoalpha}.
\end{proof}
\begin{remark}
Note that if $\St(f)=1$, then we can guarantee that Newton's methods converge quadratically. This contrasts with the situation for Descartes' rule of signs, in which $\V(f)=1$ does not guarantee fast convergence for Newton's method.
\end{remark}

In the $p$-adic case, we can also consider the Strassman count for a particular closed ball as follows:
\begin{equation}
    \St(f;x,p^{-s}):=\St\left(f(x+p^{s}T)\right) , 
\end{equation}
where $x\in\Zp$ and $s\in\bbZ$. 
Similarly to the real case, 
we notice that the zeros of $f(x+p^{s}T)$ in $\Zp$ are in one-to-one correspondence with the roots of $f$ inside $\overline{B}(x,p^{-s})=x+p^{s}\Zp$. 

In the real setting, Descartes' rule of signs is 
an important ingredient of subdivision-based algorithms for 
isolating the real roots of real univariate polynomials.
Such algorithms, they also have excellent practical performance.
We aim to show that the same is true in the $p$-adic setting.
First, we describe what do the Descartes' rule of signs and Strassman's count actually count. Second, we demonstrate how both approaches lead to algorithms for solving polynomials.

\subsection{Exact counting}

In general, Descartes' rule of signs and Strassman's do not count exactly the number of
roots, in $\bbR$ and $\Zp$, respectively; they overestimate.
However, in both cases, the overestimation is due
to the presence of complex roots (respectively, of $\bbC$ and $\Cp$) nearby.
In the case of Descarte's rule of signs,
we can interpret the overestimation in the number of roots
using 
the so-called Obbherskoff areas and lenses.

Let $\varrho\in\bbN$ and $I= (a, b)$ a bounded interval. The \emph{Obreshkoff disc} $\uObD_{\varrho}(I)$ is the disc given by
\begin{equation}
    \bbD\left(\frac{a+b}{2}\left(1+i\frac{1}{\tan\frac{\pi}{\varrho+2}}\right),\frac{a+b}{2}\frac{1}{\sin\frac{\pi}{\varrho+2}}\right),
\end{equation}
whose boundary passes through the extremes of $I$ and whose center has an angle of $2\varphi:=\frac{\pi}{\varrho+2}$ in the triangle it forms with $I$. The \emph{Obreshkoff disc} $\dObD_{\varrho}(I)$ of $I$ is the conjugate of $\uObD_{\varrho}(I)$, having its center below $I$ instead than above $I$.
The \emph{Obreshkoff area} is 
\begin{equation}
\ObA_{\varrho}(I) = \mathsf{interior}(\uObD_{\varrho}(I) \,\cup\, \dObD_{\varrho}(I)),
\end{equation}
and the
\emph{Obreshkoff lens} is 
\begin{equation}
\ObL_{\varrho}(I) = \mathsf{interior}(\uObD_{\varrho}(I) \,\cap\, \dObD_{\varrho}(I)).
\end{equation}
We shows the Obreshkoff disks, area and lense in Figure~\ref{fig:Obreshkoff}. Note that 
\[
\ObL_d(I) \subset \ObL_{d-1}(I) \subset \cdots \subset \ObL_0(I)
\]
and that
\[
\ObA_0(I) \subset \ObA_1(I) \subset \cdots \subset \ObA_d(I).
\]

\begin{figure}[ht]
  \centering
  \includegraphics[scale=0.4]{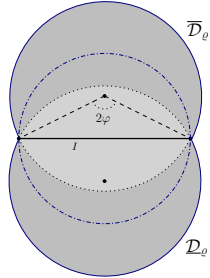}
  \caption{Obreshkoff discs, lens (light grey), and area  (light grey, grey) for an interval $I$.}
  \label{fig:Obreshkoff}
\end{figure}

The following theorem shows how the number of complex roots controls the Descartes count.

\begin{theo}[\cite{Obreshkoff-book}]
  \label{thm:Obr-signs}
  Let $f\in \bbR[T]$ be a real polynomial of degree $d$ and $I$
  a real open interval.
  If the Obreshkoff lens $\ObL_{d - k}(I)$ contains at least $k$ roots (counted with multiplicity) of $f$,
  then $k \leq V(f,I)$.
  If the Obreshkoff area $\ObA_{k}(I)$ contains at most $k$ roots (counted with multiplicity) of $f$,
  then $V(f,I) \leq k$.
  In particular,
  \begin{equation}
    \# \{ z\in\ObL_d(I)\mid f(x)=0 \}
    \le  V(f,I) \le
    \# \{ z\in \ObA_d(I)\mid f(x)=0 \},
  \end{equation}
  where the roots are counted with multiplicity.\eproof
\end{theo}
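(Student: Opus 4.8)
The plan is to conjugate the statement by the Möbius substitution sending $I$ to $(0,\infty)$, reducing it to two purely algebraic inequalities for the sign–variation count $V$ of a real polynomial, and then to appeal to the classical theory of sign variations for the combinatorial heart of the matter. For $I=(a,b)$ bounded, put $g(T):=(T+1)^d f\!\big(\tfrac{aT+b}{T+1}\big)\in\bbR[T]$, so that $V(f,I)=V(g)$ by \eqref{eq:I-to-0inf}, and note that $z\mapsto\tfrac{b-z}{z-a}$ carries the roots of $f$ in $\bbC$ bijectively onto the finite roots of $g$, a drop of $\deg g$ below $d$ recording a root of $f$ at $a$; an unbounded $I$ is treated by an affine substitution. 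A circle through $a$ and $b$ is sent to a line through the origin, so each Obreshkoff disc becomes a half‑plane with $0$ on its boundary; a short trigonometric computation, using that the angle subtended over the chord $[a,b]$ from the disc's center equals $2\varphi_\varrho$ with $\varphi_\varrho:=\tfrac{\pi}{2(\varrho+2)}$, identifies $\ObL_\varrho(I)$ with the open sector $S_{\varphi_\varrho}:=\{z\neq0 : |\arg z|<\varphi_\varrho\}$ about $\bbR_{>0}$ and $\ObA_\varrho(I)$ with the complementary region $R_{\varphi_\varrho}:=\{z\neq0 : |\arg z|<\pi-\varphi_\varrho\}$, i.e.\ $\bbC$ with the closed $\varphi_\varrho$‑sector about $\bbR_{<0}$ deleted (and $\varphi_\varrho$ decreases in $\varrho$, matching the two displayed chains of inclusions). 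It therefore suffices to prove, for $g\in\bbR[T]$ of formal degree $d$: (A) if $R_{\varphi_k}$ contains at most $k$ roots of $g$ (with multiplicity) then $V(g)\le k$; and (B) if $S_{\varphi_{d-k}}$ contains at least $k$ roots of $g$ then $V(g)\ge k$. The two "in particular" statements then follow from (A), (B) and the nesting of lenses and areas. Finally, (A) and (B) are interchanged by $g(T)\mapsto g(-T)$, which negates the roots and, when $g$ has no vanishing coefficient, obeys $V(g)+V(g(-T))=d$ (scaling alternate coefficients by $-1$ toggles each of the $d$ sign comparisons), so it is enough to prove (A), the general case of (B) following from the zero‑free case by a perturbation argument: nudge each vanishing coefficient of $g$ to a tiny value carrying the sign of the nearest lower nonzero coefficient, leaving $V$ unchanged while the roots in the \emph{open} sector $S_{\varphi_{d-k}}$ persist.

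The base case of (A) is the elementary fact that a real polynomial with all roots in the closed left half‑plane $\{\mathrm{Re}\le0\}$ is, up to an overall sign, a product of factors $T+r$ ($r\ge0$) and $T^2+bT+c$ ($b,c\ge0$), hence has one‑signed coefficients and $V=0$; dually, all roots in the open right half‑plane forces strictly alternating coefficients and $V=\deg g$ (induct on $\deg g$: multiplying a strictly alternating polynomial by such a $T+r$ or $T^2+bT+c$ keeps it strictly alternating). Since $\varphi_k<\tfrac\pi2$, a root of $g$ lying outside $R_{\varphi_k}$ sits in the closed $\varphi_k$‑sector about $\bbR_{<0}$ and so has $\mathrm{Re}\le0$; thus when $k=0$ the hypothesis of (A) forces every root into $\{\mathrm{Re}\le0\}$ and (A) follows.

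What remains is (A) for $k\ge1$, when up to $k$ roots may lie inside $R_{\varphi_k}$; here the classical route is induction on $\deg g$ built on $(i)$ the inequality $V(g')\le V(g)\le V(g')+1$ (the coefficient list of $g'$ is that of $g$ with the constant term dropped and the remaining entries rescaled by positive integers) and $(ii)$ a quantitative "Rolle in a sector": the roots of $g'$ lie in the convex hull of those of $g$ (Gauss–Lucas), with sharper estimates bounding how far a root of $g'$ can escape a sector holding all but a few roots of $g$. Balancing $(i)$ and $(ii)$ through the induction is precisely what forces the degree‑dependent half‑angle $\tfrac{\pi}{2(\varrho+2)}$, and this bookkeeping — rather than the reduction or the model case — is the main obstacle; it is the substance of Obreshkoff's theorem and is carried out in~\cite{Obreshkoff-book}. (An alternative to the induction is a continuity argument: deform $g$ within its root region to a real‑rooted polynomial, for which Descartes' rule, Theorem~\ref{theo:descartesruleofsigns}, is an equality, while tracking $V$ and the number of roots in $\overline{R_{\varphi_k}}$.) One should not expect an easy shortcut: the sign–variation count is \emph{not} subadditive under multiplication — for instance $(1-T)(T^2+\tfrac12 T+2)=-T^3+\tfrac12 T^2-\tfrac32 T+2$ has three sign changes while its factors have $1$ and $0$ — so (A) cannot be obtained simply by splitting off the roots with $\mathrm{Re}\le0$.
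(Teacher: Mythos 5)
The paper itself gives no proof of Theorem~\ref{thm:Obr-signs}; it is stated with a citation to~\cite{Obreshkoff-book} and an end-of-proof mark, so there is no argument in the paper to compare against. Your proposal is therefore not ``an alternative proof'' so much as an expanded and carefully organized reduction of the theorem to the result that the paper simply cites. The reduction steps you present are sound: the M\"obius substitution $z\mapsto(b-z)/(z-a)$ carrying $I$ to $(0,\infty)$ and roots of $f$ to roots of $g=(T+1)^d f\bigl(\tfrac{aT+b}{T+1}\bigr)$; the conformal identification of the Obreshkoff lens and area with the sectors $S_{\varphi_\varrho}$ and $R_{\varphi_\varrho}$ with $\varphi_\varrho=\tfrac{\pi}{2(\varrho+2)}$ (consistent with the paper's $2\varphi:=\tfrac{\pi}{\varrho+2}$ and with the nesting $\ObL_d\subset\cdots\subset\ObL_0$, $\ObA_0\subset\cdots\subset\ObA_d$); the $k=0$ base case via one-signed coefficients; the duality $V(g)+V(g(-T))=d$ for gap-free $g$ of formal degree $d$, together with the perturbation that removes vanishing coefficients while preserving $V$ and the open-sector root count; and the derivation of the two ``in particular'' displays from (A), (B) and the monotonicity of the lens and area families. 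Your counterexample to subadditivity of $V$ under multiplication is also correct and is a good warning that one cannot shortcut the argument by peeling off the left-half-plane roots.

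That said, you should be explicit that you have not proved the theorem: the inductive heart of (A) for $k\ge1$ --- the interplay between $V(g')\le V(g)\le V(g')+1$ and a quantitative Gauss--Lucas estimate, which is exactly where the degree-dependent half-angle $\tfrac{\pi}{2(\varrho+2)}$ is forced --- is the entire content of Obreshkoff's theorem, and you defer it to~\cite{Obreshkoff-book}. This matches the paper's own treatment (a bare citation), so it is not a gap relative to the paper, but as a standalone proof the central step is still an appeal to the literature. One small point worth flagging for precision, independent of your argument: the formula for the Obreshkoff disc in the paper appears to have a typo, writing $\tfrac{a+b}{2}$ where the geometry requires $\tfrac{b-a}{2}$ in both the imaginary offset of the center and the radius; your sector derivation implicitly uses the corrected version.
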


In the case of Strassman's counts, the theorem couldn't be more simple: it counts the number of roots in the closed unit ball of $\Cp$.

\begin{theo}\label{theo:strassmannexactforcomplex}
Let $f\in\Qp[T]$ be a $p$-adic polynomial, $x\in\Zp$ and $s\in\bbN$. Then
\begin{equation*}
    \St(f;x,p^{-s})=\#\{z\in\Cp\mid |z-x|\leq p^{-s},\,|z|\leq 1,\,f(z)=0\},
\end{equation*}
where the roots are counted with multiplicity.
\end{theo}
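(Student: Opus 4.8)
The plan is to strip off the center $x$ and the radius parameter $s$ by an affine change of variable, and then to recognize both sides of the claimed equality as the same combinatorial invariant of the resulting polynomial. First I would put $g:=f(x+p^sT)\in\Qp[T]$, so that $\St(f;x,p^{-s})=\St(g)$ by definition. The affine bijection $w\mapsto x+p^sw$ sends $\{w\in\Cp\mid|w|\le1\}$ onto $\{z\in\Cp\mid|z-x|\le p^{-s}\}$, and reading off the factorization $g(T)=g_d\prod_i(T-w_i)$ over $\Cp$ one gets $f(z)=g_dp^{-sd}\prod_i\big(z-(x+p^sw_i)\big)$, so this bijection carries the roots of $g$ with $|w|\le1$, with multiplicities, onto the roots of $f$ with $|z-x|\le p^{-s}$. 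Moreover, since $x\in\Zp$ and $s\ge0$, every such $z$ automatically has $|z|\le1$, so the extra condition $|z|\le1$ in the statement is vacuous. Hence it suffices to prove that for every nonzero $h=\sum_kh_kT^k\in\Cp[T]$ one has $\St(h)=\#\{w\in\Cp\mid|w|\le1,\ h(w)=0\}$, counted with multiplicity (passing to $\Cp[T]$ is harmless, since $\St$ depends only on the absolute values of the coefficients).

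Next I would compute both sides and see that they agree with $N:=\max\{k\mid|h_k|=\|h\|\}$, where $\|h\|:=\max_k|h_k|$. That $\St(h)=N$ is elementary: $N$ satisfies the defining condition of $\St$ because $|h_N|=\|h\|\ge|h_l|$ for all $l<N$, while any $k>N$ violates it, since then $|h_k|<\|h\|=|h_N|$ with $N<k$. For the root count I would factor $h$ over the algebraically closed field $\Cp$ and invoke the standard description of its Newton polygon (the lower convex hull of the points $(k,v(h_k))$, with $v$ the valuation): a root of valuation $\lambda$ contributes a segment of slope $-\lambda$ of horizontal length equal to its multiplicity, and since the slopes increase from left to right, the roots with $|w|\le1$ --- equivalently valuation $\ge0$, the root $0$ of valuation $+\infty$ included --- are exactly those coming from segments of non-positive slope. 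Their total number is therefore the largest abscissa at which the Newton polygon attains its minimal ordinate, namely $\max\{k\mid v(h_k)=\min_l v(h_l)\}=\max\{k\mid|h_k|=\|h\|\}=N$. Combining the two computations yields $\St(h)=N=\#\{w\in\Cp\mid|w|\le1,\ h(w)=0\}$, and the reduction above finishes the proof.

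The step needing the most care is the Newton-polygon count when $h(0)=0$: the polygon then starts at the first nonzero coefficient $(k_0,v(h_{k_0}))$, and the multiplicity $k_0$ of the root $0$ must be added to the count of nonzero roots of valuation $\ge0$, after which the same bookkeeping again totals $N$; one should also note, trivially, that the rightmost lattice point of the polygon at minimal height is a genuine vertex, because all points to its right lie strictly higher. If one prefers to avoid Newton polygons, the same count follows from writing $h=h_1h_2$ with $h_1$ the product of the linear factors $T-\alpha$ having $|\alpha|\le1$ and $h_2$ the product of the rest: then $\|h_1\|=1$ is attained at the leading coefficient of $h_1$, whereas $\|h_2\|$ is attained \emph{only} at the constant coefficient of $h_2$, so multiplicativity of the Gauss norm together with the ultrametric inequality pins the top index $k$ with $|h_k|=\|h\|$ to $\deg h_1=N$.
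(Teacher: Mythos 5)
Your proof is correct and takes essentially the same route as the paper: reduce to $s=0$, $x=0$ via the substitution $T\mapsto x+p^sT$, then use the Newton polygon of the resulting polynomial to count roots of valuation $\ge 0$. You fill in the details the paper leaves implicit --- identifying both quantities with $N=\max\{k:|h_k|=\|h\|\}$, handling the vertical segment when $h(0)=0$, and observing that the $|z|\le1$ condition is vacuous --- and your closing factorization argument is a clean self-contained alternative to citing the Newton-polygon theorem.
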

\begin{proof}
We only need to prove the claim for $s=0$ and $x=0$.
This is a consequence of the Newton polygon of $f$ counting the roots over $\Cp$, see~\cite[Theorem~6.4.7]{gouveabook}. To see the statement we only have to note that $\St(f)$ is the sum of the lengths of the non-positive slopes of the Newton polytope, and so the number of roots of $f$ in $\Cp$ with absolute value bounded by $1$.  
\end{proof}

\subsection{The subadditivity property of counting}

We want to use Descartes's rule of signs and Strassman's count for overcounting fast the number of roots in, respectively, intervals and closed balls. In the real case
we use the transformation in \eqref{eq:I-to-0inf} to count the number of roots
in an interval.
Another fundamental property of Descartes' count is the following one:

\begin{prop}\cite[Proposition 2.26]{eigenwillig_real_2010}
Let $f\in\bbR[T]$ be a real polynomial and $a_0,\ldots,a_n$ an strictly increasing sequence of real numbers. Then
\[
\sum_{i=0}^{n-1}\V(f;a_{i},a_{i+1})+\sum_{i=1}^{n-1}o(f,a_i)\}\leq \V(f;a_1,a_n),
\]
where $o(f,a_i)\in\bbN$ is the order of $f$ at $a_i$. In other words, Descartes' count (in an interval) is subadditive.
\end{prop}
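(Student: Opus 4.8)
The statement is the subadditivity of the Descartes count; one could simply invoke \cite[Proposition~2.26]{eigenwillig_real_2010}, but here is the approach I would take. The plan is to reduce to a single interior subdivision point and then argue with the Bernstein/de Casteljau description of $\V$ (writing $d=\deg f$ throughout).

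\textbf{Step 1: reduction to one interior point.} By induction on $n$ it suffices to prove that for any three reals $a<c<b$,
\[
\V(f;a,c)+\V(f;c,b)+o(f,c)\ \le\ \V(f;a,b).
\]
Granting this, apply it to $a_0<a_1<a_n$ and then the inductive hypothesis to the shorter chain $a_1<\dots<a_n$; the two inequalities telescope into the asserted bound. The base case $n=1$ is an equality.

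\textbf{Step 2: from $\V$ to Bernstein coefficients.} By \eqref{eq:I-to-0inf}, $\V(f;a,b)$ is the number of sign changes of the coefficient list of $(T+1)^{d}f\!\big(\tfrac{aT+b}{T+1}\big)$, and that list is, up to order-reversal and multiplication of each entry by a positive constant, the sequence of Bernstein coefficients $(b_{0},\dots,b_{d})$ of $f$ on $[a,b]$, normalised so that $f(x)=(b-a)^{-d}\sum_i b_i\binom{d}{i}(x-a)^i(b-x)^{d-i}$. The subdivision point $c$ corresponds to $\lambda=\tfrac{c-a}{b-a}\in(0,1)$, and de Casteljau's algorithm — set $b_i^{[0]}=b_i$ and $b_i^{[r]}=(1-\lambda)b_i^{[r-1]}+\lambda b_{i+1}^{[r-1]}$ — produces the Bernstein coefficients of $f$ on $[a,c]$ as $(b_0^{[0]},b_0^{[1]},\dots,b_0^{[d]})$ and those on $[c,b]$ as $(b_0^{[d]},b_1^{[d-1]},\dots,b_d^{[0]})$, the shared middle term being $b_0^{[d]}=f(c)$.

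\textbf{Step 3: the combinatorial core.} Now everything reduces to a statement about sign changes of finite real sequences. The elementary input is that replacing a consecutive pair $(u,v)$ in a sequence by a positive convex combination $(1-\mu)u+\mu v$ never increases the number of sign changes; running this through the triangular de Casteljau array and locating the left edge $(b_0^{[r]})_r$ and the right edge $(b_i^{[d-i]})_i$ inside it yields $\V(f;a,c)+\V(f;c,b)\le\V(f;a,b)$. The extra $o(f,c)$ is recovered from the fact that if $c$ is a root of multiplicity $m$ then $f(c)=f'(c)=\dots=f^{(m-1)}(c)=0$, and, through the Bernstein form of the derivatives, this forces the entries $b_0^{[d]},b_0^{[d-1]},\dots$ near the corner of the array to vanish, so deleting those zeros from the concatenated boundary lists costs exactly $m$ sign changes. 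I expect the main obstacle to be precisely this last piece of bookkeeping: one has to fix once and for all the convention for counting sign changes in sequences containing zeros, verify the vanishing pattern in the de Casteljau array produced by a multiplicity-$m$ root, and check that the two boundary lists do not share more cancellation than $m$. A self-contained alternative would be to feed the same counts through Obreshkoff's theorem (Theorem~\ref{thm:Obr-signs}) — bounding $\V(f;a,c)$ and $\V(f;c,b)$ from above by the number of roots in Obreshkoff areas of the two subintervals and $\V(f;a,b)$ from below by the number of roots in an Obreshkoff lens of $(a,b)$ — but matching up the nested Obreshkoff regions and handling complex-conjugate roots turns out to be at least as delicate, so I would keep the de Casteljau route as the main line.
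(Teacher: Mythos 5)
The paper itself gives no proof here --- it cites \cite[Proposition~2.26]{eigenwillig_real_2010} --- so the question is whether your blind attempt is sound. Your Steps~1 and~2 are fine (and you implicitly correct the paper's typo: the right-hand side should read $\V(f;a_0,a_n)$), and the de~Casteljau / variation-diminishing route you take is indeed the standard one. But there is a genuine gap in Step~3, precisely where you flagged you expected one, and the mechanism you propose for recovering the $o(f,c)$ term does not work as stated. You claim that ``deleting those zeros from the concatenated boundary lists costs exactly $m$ sign changes.'' It costs zero: by your own convention (``we omit the zeros''), removing or inserting zeros in a sequence leaves $V$ unchanged. Concretely, take $f(x)=x^2$ on $[-1,1]$ with $c=0$: the top row of Bernstein coefficients is $(1,-1,1)$, so $\V(f;-1,1)=2$; the de~Casteljau boundary is $(1,0,0,0,1)$, with $V=0$; and $\V(f;-1,0)=\V(f;0,1)=0$. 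The variation-diminishing bound on the boundary gives $0\leq 2$, and deleting the three zeros changes nothing --- the $+m=2$ has to come from the slack in the variation-diminishing inequality, not from a zero-deletion count. Also, your ``elementary input'' lemma (replacing a consecutive pair by a convex combination never increases sign changes) is not the correct statement of variation diminishing for de~Casteljau subdivision: the boundary sequence is obtained by interleaving convex combinations and then \emph{deleting} interior entries, so the right lemma is an inductive statement over the rows (or total nonnegativity of the subdivision operator), not a single-pair replacement.

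A clean way to close the gap is to separate the multiplicity from the subdivision. Write $f=(x-c)^m g$ with $g(c)\neq 0$. A direct computation with the M\"obius substitution $x\mapsto\tfrac{aT+c}{T+1}$ shows $\V(f;a,c)=\V(g;a,c)$ and likewise $\V(f;c,b)=\V(g;c,b)$ (the factor $(x-c)^m$ pulls out as a monomial times a constant after the substitution). For the full interval, the same substitution gives $(T+1)^d f\big(\tfrac{aT+b}{T+1}\big)=(a-c)^m(T-T_0)^m\,G(T)$ with $T_0=\tfrac{b-c}{c-a}>0$ and $G(T)=(T+1)^{d-m}g\big(\tfrac{aT+b}{T+1}\big)$; the classical lemma that multiplication by $(T-T_0)$, $T_0>0$, raises the sign-variation count by at least one then yields $\V(f;a,b)\geq\V(g;a,b)+m$. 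Combining this with the $m=0$ case of your de~Casteljau argument applied to $g$ gives the desired $\V(f;a,c)+\V(f;c,b)+m\leq\V(f;a,b)$.
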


We have an analogous statement
for the subadditivity property,
by substituting a union of disjoint intervals with an union of disjoint closed balls.

\begin{prop}\label{prop:strassmansubadditive}
Let $f\in\Qp[T]$ be a $p$-adic polynomial and also let
\[\overline{B}(x_1,p^{-s_1}),\ldots,\overline{B}(x_n,p^{-s_n})\]
be pairwise disjoint closed balls inside $\overline{B}(x,p^{-s})$. Then
\[
\sum_{i=0}^{r-1}\St(f;x_i,p^{-s_i})\leq \St(f;x,p^{-s}).
\]
In other words, Strassman's count (in a ball) is subadditive.
\end{prop}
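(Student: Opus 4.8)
The plan is to leverage Theorem~\ref{theo:strassmannexactforcomplex}, which identifies the Strassman count $\St(f;x_i,p^{-s_i})$ with the number of roots of $f$ in $\Cp$ lying in the closed ball $\overline{B}_{\Cp}(x_i,p^{-s_i})$, counted with multiplicity. Once we have this reinterpretation, the subadditivity of the counts becomes the almost trivial fact that a disjoint union of sets cannot contain more roots than the whole.

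First I would note that Theorem~\ref{theo:strassmannexactforcomplex} is stated for $s\in\bbN$ and $x\in\Zp$, while here $x_i$ and $x$ live in $\Zp$ and the radii are $p^{-s_i}$, $p^{-s}$; so I should first observe that each ball $\overline{B}(x_i,p^{-s_i})\subseteq\overline{B}(x,p^{-s})$ forces $s_i\ge s$, and after the affine change of variables $T\mapsto x+p^sT$ (which is a bijection of $\Zp$ onto $\overline{B}(x,p^{-s})$ and, extended to $\Cp$, a bijection of $\overline{B}_{\Cp}(0,1)$ onto $\overline{B}_{\Cp}(x,p^{-s})$ preserving the ball structure and root multiplicities) we may assume $x=0$, $s=0$. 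Then the hypothesis is that $\overline{B}(x_1,p^{-s_1}),\dots,\overline{B}(x_n,p^{-s_n})$ are pairwise disjoint closed balls contained in $\Zp=\overline{B}(0,1)$, with each $s_i\ge 0$. (I note in passing that the statement carries an index mismatch — the sum runs to $r-1$ while $n$ balls are named, and the displayed exact-count theorem uses $s\in\bbN$; I would silently read $r=n$ and use the affine reduction to handle general $s$.)

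Next, I would apply Theorem~\ref{theo:strassmannexactforcomplex} to each ball. Writing $Z:=\{z\in\Cp\mid |z|\le 1,\ f(z)=0\}$ for the multiset of roots of $f$ in the closed unit ball of $\Cp$ (with multiplicities), we get
\[
\St(f;x_i,p^{-s_i})=\#\{z\in Z\mid |z-x_i|\le p^{-s_i}\}=:\#(Z\cap \overline{B}_{\Cp}(x_i,p^{-s_i})),
\]
and $\St(f;0,1)=\St(f)=\#Z$. The key geometric point is that the balls $\overline{B}_{\Cp}(x_i,p^{-s_i})$ are pairwise disjoint as subsets of $\Cp$: since the ultrametric balls $\overline{B}(x_i,p^{-s_i})$ in $\Zp$ are pairwise disjoint, and two ultrametric balls are either nested or disjoint, disjointness in $\Zp$ propagates to disjointness of the corresponding balls in $\Cp$ (a point of $\Cp$ in both would, being within distance $p^{-\max(s_i,s_j)}$ of $\Zp$, produce a common $p$-adic center contradiction; more cleanly, $|x_i-x_j|>p^{-\min(s_i,s_j)}$ by disjointness in $\Zp$, which is exactly disjointness of the $\Cp$-balls). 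Hence the sets $Z\cap\overline{B}_{\Cp}(x_i,p^{-s_i})$ are pairwise disjoint submultisets of $Z$, and
\[
\sum_{i=1}^{n}\St(f;x_i,p^{-s_i})=\sum_{i=1}^{n}\#\big(Z\cap\overline{B}_{\Cp}(x_i,p^{-s_i})\big)=\#\Big(\bigcup_{i=1}^{n}Z\cap\overline{B}_{\Cp}(x_i,p^{-s_i})\Big)\le \#Z=\St(f;0,1),
\]
which after undoing the affine change of variables is exactly the claimed inequality.

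I do not expect a genuine obstacle here: the only point requiring a word of care is the transfer of disjointness from $\Zp$-balls to $\Cp$-balls, which is where the ultrametric inequality does the work, together with the mild bookkeeping of the affine reduction so that the already-proved exact-count theorem applies verbatim. Everything else is counting elements of a finite multiset. (Unlike the real Descartes case, there is no analogue of the order term $o(f,a_i)$: in the $p$-adic picture the ``endpoints'' are absorbed into the closed balls themselves, so no boundary correction appears.)
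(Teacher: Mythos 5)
Your proof follows exactly the same route as the paper's: invoke Theorem~\ref{theo:strassmannexactforcomplex} to read each Strassman count as a root count (with multiplicity) in the corresponding $\Cp$-ball, observe that disjointness and containment of the $\Zp$-balls transfer to the $\Cp$-balls, and conclude by counting roots in a disjoint union. The affine reduction to $x=0$, $s=0$ and the spelled-out ultrametric argument for disjointness are just slightly more explicit bookkeeping than what the paper writes, not a different idea.
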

\begin{proof}
By Theorem~\ref{theo:strassmannexactforcomplex}, $\St(f;y,p^{-t})$ counts the number of roots in $\Cp$ inside the closed ball
\[\overline{B}'(y,p^{-t}):=\{z\in\Cp\mid |z-y|\leq p^{-t}\}.\]
Now, if the $\overline{B}(x_i,p^{-s_i})$ are pairwise disjoint, then so are the $\overline{B}'(x_i,p^{-s_i})$; and if the $\overline{B}(x_i,p^{-s_i})$ are contain in $\overline{B}(x,p^{-s})$, so are the $\overline{B}'(x_i,p^{-s_i})$ in $\overline{B}'(x,p^{-s})$. Hence
\begin{multline}
    \sum_{i=0}^{r-1}\St(f;x_i,p^{-s_i})=\#\left\{z\in \bigcup_{i=1}^n\overline{B}'(x_i,p^{-s_i})\mid f(z)=0\right\}\\\leq \#\{z\in \overline{B}'(x,p^{-s})\mid f(z)=0\}=\St(f;x,p^{-s}),
\end{multline}
as we wanted to show.
\end{proof}

\subsection{Algorithms based on counting I: the real case}

We want to find the real roots of a real square-free polynomial $f\in\bbR[T]$ in an interval $I$. How can we do show? To do so, we will be subdividing the interval $I$ and `counting' the roots in each obtained interval $J$ until we can guarantee that every interval either contains no root of $f$ or contains a single root of $f$.

In the above process, one can use methods that produce an exact count such as Sturm sequences~\cite{dusharmayap2005}. However, exactness is not required for the counting method as long as we can guarantee that certain conditions are satisfied:
\begin{enumerate}
\item[(0)] The method is cheap to compute.
\item[(1)] The method does never undercount.
\item[(2)] The method is subadditive: the sum of the counts the method provides for subintervals $J_1,\ldots,J_s$ subdividing $I$ is at most the count it provides for $I$.
\item[(3)] If the method output zero or one, then the count is exact.
\item[(4)] If an interval is sufficiently small, then the method provides an exact count.
\end{enumerate}
Condition (0) justifies using an inexact count instead of an exact one, condition (1) allows us to terminate the algorithm at any point with the guarantee that we are bounding from above the number of roots; condition (2) guarantees that, at each subdivision, we cannot worsen our estimation; condition (3) allows us to easily terminate the algorithm at isolating intervals; and condition (4) guarantees that the algorithm will terminate at some point.

All the results until now shows that Descartes count, $\V$, satisfies these conditions. Because of this, one can use the Descartes count for isolating real roots as Algorithm \ref{alg:descartes}, \nameref{alg:descartes}, shows. We state the algorithm only for the interval $(-1,1)$ and assuming exact operations with real numbers to ease exposition. However, \nameref{alg:descartes} can be run with finite precision after some modification both in theory~\cite{sagraloffmehlhorn2016} and in practice~\cite{kobelrouilliersagraloff2016} (see also~\cite{eigenwillig_real_2010}).

\begin{algorithm*}[!htp]
\DontPrintSemicolon
\SetKwInOut{input}{Input}
\SetKwInOut{output}{Output}
\caption{\textsc{Descartes}}\label{alg:descartes}
\input{$f\in \bbR[T]$}
\precondition{$f$ does not have singular roots in $(-1,1)$}
\nonl\hrulefill

$\mcS\leftarrow \{(-1,1)\}$\tcp*{Set of intervals to be processed}
$\mcZ\leftarrow \{\mcZ\}$\tcp*{Set of isolating intervals}
\tcc{Subdivision loop}
\Repeat{$\mcS=\varnothing$}{
Take $J=(a,b)\in\mcS$\;
$x_m\leftarrow \frac{a+b}{2}$
$J_l\leftarrow \left(a,x_m\right)$\;
$J_r\leftarrow \left(x_m,b\right)$\;
\tcc{Processing $x_m$}
\If{$f(x_m)=0$}{Add $\{x_m\}$ to $\mcZ$\tcp*{$x_m$ is a root of $f$}}
\tcc{Processing $J_l$}
$c_l\leftarrow \V(f,J_l)$\tcp*{We use Descartes count on $J_l$}
\uIf{$c_l>1$}{Add $J_l$ to $\mcS$\tcp*{Count too high, we have to subdivide $J_l$}}
\uElseIf{$c_l=1$}{Add $J_l$ to $\mcZ$\tcp*{$J_l$ contains exactly one root of $f$}}
\Else{Discard $J_l$\tcp*{$c_l=0$, and so $J_l$ does not contain any root.}}
\tcc{Processing $J_r$}
$c_r\leftarrow \V(f,J_r)$\tcp*{We use Descartes count on $J_r$}
\uIf{$c_r>1$}{Add $J_r$ to $\mcS$\tcp*{Count too high, we have to subdivide $J_r$}}
\uElseIf{$c_r=1$}{Add $J_r$ to $\mcZ$\tcp*{$J_r$ contains exactly one root of $f$}}
\Else{Discard $J_l$\tcp*{$c_r=0$, and so $J_r$ does not contain any root.}}
}
\tcc{Return of the isolating intervals}
\KwRet{$\mcZ$}

\nonl\hrulefill\\
\output{$J_1,\ldots,J_r\subseteq (-1,1)$}
\postcondition{The $J_i$ are pairwise disjoint\\
$\mcZ(f,(-1,1))\subset \bigcup_i J_i$\\
For all $i$, $\#\,J_i\cap \mcZ(f,(-1,1))=1$}
\end{algorithm*}

To analyze an algorithm such as \nameref{alg:descartes} is very convenient to consider the associated \emph{Descartes tree} $\mcT(f)$ obtained by the intervals $J$ appearing during the execution of \nameref{alg:descartes}$(f)$ and ordered by inclusion. The size of $\mcT(f)$ controls the run-time of \nameref{alg:descartes} at $f$. Now, to control the size of this tree, one normally separates the width and the height of this tree.

In general, the width of Descartes tree depends on the number of complex roots of $f$ nearby $(-1,1)$, and its depth on the separation of these complex roots, which can be controlled by the real condition number $f$ which is given by
\begin{equation}
    \kappa(f):=\sup_{x\in[-1,1]}\frac{\|f\|_1}{|f(x)|,|f'(x)|/d}\in [1,\infty],
\end{equation}
where $\|f\|_1:=\sum_{k}|f_k|$ is the $1$-norm of $f$. The following theorem summarizes the results,\footnote{The statement of Theorem~\ref{theo:descartescondbasedcomplexity} } in~\cite{TCTcubeI-journal} and in the recent~\cite{ETCT-descartes}, on the size of the Descartes tree---for full statement on complexity we refer to those papers.

\begin{theo}\label{theo:descartescondbasedcomplexity}
Let $f\in\bbR[T]$ be a real polynomial of degree $d$. Then:
\begin{enumerate}
    \item[(w)] The width of $\mcT(f)$ is at most
    \[
    \#\left\{\zeta\in\mcZ(f,\bbC)\mid \zeta\in\bigcup_{x\in [-1,1]}\bbD(x,(1-x)/4)\text{ or }\dist(z,[-1,1])\leq \frac{1}{d}\right\}.
    \]
    \item[(d)] The depth of $\mcT(f)$ is at most
    \begin{equation*}\tag*{\qed}
       6+\log \kappa(f)+\log d. 
    \end{equation*}
\end{enumerate}
\end{theo}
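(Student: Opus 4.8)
\medskip
\noindent\emph{Proof idea.}\quad The plan is to bound the width and the depth of $\mcT(f)$ separately, since the number of intervals in $\mcT(f)$ is at most the product of its width and one plus its depth. The width will come from the Obreshkoff theorem (Theorem~\ref{thm:Obr-signs}) together with a packing argument, and the depth from the same theorem together with the condition number $\kappa(f)$.

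For the depth bound I would argue as follows. Any interval $J$ occurring in $\mcT(f)$ that is not a leaf satisfies $\V(f,J)\ge 2$, since otherwise \nameref{alg:descartes} would not subdivide it. Applying the area half of Theorem~\ref{thm:Obr-signs} with $k=1$ to $J$ — after undoing the M\"obius normalisation of \eqref{eq:I-to-0inf} used to evaluate $\V(f,J)$ — shows that $\ObA_1(J)$ contains at least two roots of $f$ counted with multiplicity; since by hypothesis $f$ has no repeated root in $(-1,1)$, and roots slightly outside $[-1,1]$ are the edge cases handled in the cited references, this means $f$ has two roots at mutual distance $O(2^{-t})$, where $t$ is the depth of $J$. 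Indeed a short computation bounds $\ObA_1(J)$ by the Euclidean disc centred at the midpoint of $J$ of radius a fixed multiple of the width $2^{1-t}$ of $J$. The remaining ingredient is the separation lemma: if $x\in[-1,1]$ and $f$ has two roots, counted with multiplicity, within $\rho$ of $x$, then $\rho\gtrsim 1/(\kappa(f)\,d)$. This I would deduce from the definition of $\kappa(f)$ and Taylor estimates of $f$ and $f'$ at $x$, the bound $|x|\le 1$ keeping the relevant derivative norms proportional to $\|f\|_1$. Combining the two facts gives $2^{-t}\gtrsim 1/(\kappa(f)d)$, i.e.\ $t\le \log\kappa(f)+\log d+O(1)$, and tracking the constants in the Obreshkoff discs and in the separation lemma turns the $O(1)$ into the stated $6$.

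For the width bound the starting point is the same: every non-leaf interval $J$ of $\mcT(f)$ carries at least two complex roots of $f$ inside $\ObA_1(J)$. The new ingredient is a packing argument at a fixed level $\ell$: the intervals occurring at level $\ell$ are pairwise disjoint and all of width $2^{1-\ell}$, so the discs containing their Obreshkoff areas have radius $\Theta(2^{-\ell})$ and centres at mutual distance $\ge 2^{1-\ell}$, whence each point of $\bbC$ lies in only $O(1)$ of them. Since the intervals at level $\ell$ are exactly the children of the non-leaf intervals at level $\ell-1$, their number is twice the number of non-leaf intervals at level $\ell-1$, each of which carries $\ge 2$ complex roots in its Obreshkoff area; the packing bound then makes this at most a constant times the number of complex roots of $f$ in $\bigcup_J \ObA_1(J)$, the union over all intervals of $\mcT(f)$. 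The last step is the geometric inclusion
\[
\bigcup_J\ObA_1(J)\ \subseteq\ \Big(\bigcup_{x\in[-1,1]}\bbD\big(x,\tfrac{1-x}{4}\big)\Big)\ \cup\ \{z\in\bbC\mid \dist(z,[-1,1])\le 1/d\},
\]
where the first region accounts for the Obreshkoff discs of the sub-intervals of $(-1,1)$ in the coordinates of \eqref{eq:I-to-0inf} and the $1/d$-neighbourhood swallows the $O(1)$ deepest levels, at which the width of $J$ is already below $1/d$; absorbing the packing constant, as in the cited works, leaves exactly the count in~(w).

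The conceptual skeleton — $\V(f,J)\ge 2$ forces two nearby roots, disjointness of equal-width intervals forces bounded overlap of the Obreshkoff areas, and a moderate condition number forbids clustered roots — is the easy part. The hard part will be the quantitative bookkeeping: making the union of Obreshkoff areas land \emph{inside} precisely the region in~(w) (in particular the asymmetry inherited from \eqref{eq:I-to-0inf} and the separate treatment of the finest scales), and proving the separation lemma with the single power of $d$ needed for the depth to be $\log d$ rather than $2\log d$. These are exactly the estimates carried out in~\cite{TCTcubeI-journal} and~\cite{ETCT-descartes}.
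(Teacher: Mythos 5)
The paper does not prove Theorem~\ref{theo:descartescondbasedcomplexity}. The statement ends with the terminal QED mark the author uses for results quoted without proof, and the sentence introducing it explicitly presents it as a summary of what is shown in \cite{TCTcubeI-journal} and \cite{ETCT-descartes}, to which the paper defers even for the full complexity statements. There is therefore no in-paper proof against which to measure your reconstruction. Your outline does track the architecture of those cited works: both bounds go through the Obreshkoff area count, the width via disjointness of same-level intervals plus bounded overlap of Obreshkoff regions, the depth via a condition-number-to-separation estimate, and you are upfront that the quantitative bookkeeping lives in the references. Two places worth flagging where, as written, the sketch cannot deliver the theorem's exact constants. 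First, the width bound in (w) carries no multiplicative constant in front of the count, so the $O(1)$ packing constant for $\ObA_1$ is not something you can simply ``absorb''; a tighter route is to observe that $\ObA_0(J)$ is exactly the open disc with diameter $J$, so the $\ObA_0$'s of disjoint same-width intervals are genuinely disjoint, and the Obreshkoff area bound of Theorem~\ref{thm:Obr-signs} with $k=0$ already supplies one root per interval retained at a level with no overcounting at all. Second, in the depth argument, the two roots that Obreshkoff forces into $\ObA_1(J)$ may be a complex-conjugate pair or a near-double root, so the separation lemma you posit must be a statement over $\bbC$ that handles the (near-)repeated-root case; this is exactly where the ``single power of $d$'' you flag is decided, and it is not immediate from the definition of $\kappa(f)$ as you suggest. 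As a reconstruction of the cited argument the sketch is faithful; as a stand-alone proof it is, as you yourself say, not yet one.
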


The importance of the above bound is that it can be used to obtain complexity bounds of \nameref{alg:descartes} for a random real polynomial $\fkf\in\bbR[T]$ of degree $d$. We state the result\footnote{The bound for the width of Theorem~\ref{theo:descartescondbasedcomplexity} can only be found in \cite{ETCT-descartes} for a random integer polynomial, but it can be easily generalized to the continuous case.} in a very specific case, even though the result holds in greater generality as it can be seen in~\cite{TCTcubeI-journal} and~\cite{ETCT-descartes}.

\begin{theo}\label{theo:descartesprobcomplexity}
Let $\fkf\in\bbR[T]$ be a random real polynomial of degree $d$ whose coefficients are independent random variables uniformly distributed in $[-1,1]$. Then:
\begin{enumerate}
    \item[(w)] The expected width of $\mcT(\fkf)$ is at most
    \[
    \Oh(\log^2 d).
    \]
    \item[(d)] The expected depth of $\mcT(\fkf)$ is at most
    \begin{equation*}\tag*{\qed}
     \Oh(\log d). 
    \end{equation*}
\end{enumerate}
\end{theo}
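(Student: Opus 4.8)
The plan is to derive both items directly from the deterministic, condition-based bounds of Theorem~\ref{theo:descartescondbasedcomplexity}, so that the whole argument reduces to two probabilistic estimates: a polynomial tail bound for the real condition number $\kappa(\fkf)$, and a bound on the expected number of complex zeros of $\fkf$ in a thin neighbourhood of $[-1,1]$. Both rest on one elementary anti-concentration fact. Since $\fkf_0,\dots,\fkf_d$ are independent uniform on $[-1,1]$, for any $x$ with $|x|\le 1$ the value $\fkf(x)=\sum_k\fkf_k x^k$ is, conditionally on $\fkf_1,\dots,\fkf_d$, a uniform translate of $\fkf_0$, so $\prob[\,|\fkf(x)|\le\varepsilon\,]\le\varepsilon$ for $\varepsilon\le 1$ and hence $\bbE[-\log|\fkf(x)|]\le\int_0^\infty e^{-u}\,du=\Oh(1)$; likewise, conditioning on $\fkf_2,\dots,\fkf_d$, the pair $(\fkf(x),\fkf'(x))$ is an affine image with unit Jacobian of $(\fkf_0,\fkf_1)\sim\mathrm{Unif}([-1,1]^2)$, so it has density at most $1/4$. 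Also $\|\fkf\|_1\le d+1$ always.

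\emph{Depth.} By Theorem~\ref{theo:descartescondbasedcomplexity}(d), $\operatorname{depth}\mcT(\fkf)\le 6+\log d+\log\kappa(\fkf)$, so it suffices to show $\bbE[\log\kappa(\fkf)]=\Oh(\log d)$; since $\kappa\ge1$ this follows from a tail bound $\prob[\kappa(\fkf)>t]\le\operatorname{poly}(d)/t$ via $\bbE[\log\kappa]=\int_1^\infty\prob[\kappa>t]\,\tfrac{dt}{t}$. To get the tail, fix $t$ and take a $1/(dt)$-net $N$ of $[-1,1]$, of size $\Oh(dt)$. Using $\sup_{[-1,1]}|f'|\le\|f'\|_1\le d\|f\|_1$ and $\sup_{[-1,1]}|f''|\le\|f''\|_1\le d^2\|f\|_1$, one checks that $\kappa(f)>t$ forces, at the nearest net point $x_0$, \emph{both} $|f(x_0)|\le 2\|f\|_1/t$ and $|f'(x_0)|/d\le 2\|f\|_1/t$. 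By the two-dimensional anti-concentration bound above (the evaluation functionals $f\mapsto f(x_0)$ and $f\mapsto f'(x_0)$ being non-degenerate), each such joint event has probability $\Oh\!\big(d\cdot(d/t)^2\big)=\Oh(d^3/t^2)$, and a union bound over $N$ gives $\prob[\kappa(\fkf)>t]\le\Oh(d^4/t)$. Combined with the trivial bound for $t\le\Oh(d^4)$ this yields $\bbE[\log\kappa(\fkf)]\le\log\Oh(d^4)+\Oh(1)=\Oh(\log d)$, hence $\bbE[\operatorname{depth}\mcT(\fkf)]=\Oh(\log d)$.

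\emph{Width.} By Theorem~\ref{theo:descartescondbasedcomplexity}(w), the width of $\mcT(\fkf)$ is at most the number of complex zeros of $\fkf$ in the region $R$ described there: a neighbourhood of $[-1,1]$ pinched to transverse width $\Theta(1-|x|)$ and never exceeding an $\Oh(1/d)$-dilation of $\overline\bbD(0,1)$. The key observation is that $R$ can be covered by $\Oh(\log d)$ discs $\bbD(x_j,r_j)$ with $x_j\in[-1,1]$ and $|x_j|+2r_j\le 1+\Oh(1/d)$: one of radius $\asymp 1$ for the middle, and $\Oh(1)$ of radius $\asymp 2^{-k}$ centred at distance $\asymp 2^{-k}$ from $\pm1$ for each dyadic scale $2^{-k}$, $k=1,\dots,\Oh(\log d)$ (the innermost scale, of size $\asymp 1/d$, also absorbs the $1/d$-tube). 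For each such disc, Jensen's formula gives
\[
\#\{z\in\bbD(x_j,r_j):f(z)=0\}\ \le\ \tfrac{1}{\log 2}\big(\log\|f\|_1+d\log\max(1,|x_j|+2r_j)-\log|f(x_j)|\big),
\]
and since $|x_j|+2r_j\le 1+\Oh(1/d)$ the middle term is $d\cdot\Oh(1/d)=\Oh(1)$. Taking expectations and using $\|\fkf\|_1\le d+1$ and $\bbE[-\log|\fkf(x_j)|]=\Oh(1)$, each disc contributes $\Oh(\log d)$ in expectation; summing over the $\Oh(\log d)$ discs yields $\bbE[\operatorname{width}\mcT(\fkf)]\le\bbE\,\#\{z\in R:\fkf(z)=0\}=\Oh(\log^2 d)$.

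\emph{Main obstacle.} The deterministic reductions are immediate, so the content is probabilistic. For the depth, the delicate point is balancing the net (of size $\Oh(dt)$) against the small-ball probability: using $|f(x_0)|$ alone gives a union bound that does not decay in $t$, so one must exploit the \emph{joint} smallness of $f$ and $f'$ at $x_0$ together with the non-degeneracy of the corresponding pair of evaluation functionals. For the width, the subtlety is organising the covering of $R$ so that every disc stays inside a $1/d$-dilation of $\overline\bbD(0,1)$, since a disc poking out by a constant would put a spurious factor $d$ into Jensen's estimate; this is precisely why the pinched shape of the region in Theorem~\ref{theo:descartescondbasedcomplexity}(w) matters. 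Finally, the anti-concentration inputs are trivial here because the coefficients are continuous uniform; in the integer-coefficient setting of~\cite{ETCT-descartes} this is where the real work lies, and the present statement is the easy specialisation promised there.
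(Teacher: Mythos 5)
The paper itself offers no proof of Theorem~\ref{theo:descartesprobcomplexity}: it is quoted with a terminal \(\square\) and the footnote makes clear that the argument lives in~\cite{TCTcubeI-journal} and~\cite{ETCT-descartes}. Your reconstruction is correct and, as far as I can tell, follows the same route as those references: for the depth you reduce via Theorem~\ref{theo:descartescondbasedcomplexity}(d) to \(\bbE[\log\kappa(\fkf)]=\Oh(\log d)\), obtained from a \(1/(dt)\)-net on \([-1,1]\) together with the observation (correct and essential) that the one-dimensional small-ball bound is not summable over the net, so one must use the joint non-degeneracy of \(f\mapsto(f(x_0),f'(x_0))\) with its unit Jacobian to get \(\Oh(d^3/t^2)\) per net point; for the width you reduce via Theorem~\ref{theo:descartescondbasedcomplexity}(w) to counting complex zeros in the pinched neighbourhood \(R\), then cover \(R\) by \(\Oh(\log d)\) dyadic discs and control the zero count per disc by Jensen's inequality with \(\|\fkf\|_1\le d+1\) and \(\bbE[-\log|\fkf(x_j)|]=\Oh(1)\). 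Two small imprecisions worth tightening before this is a clean proof: the central disc must have radius at most \(1/2+\Oh(1/d)\), not merely \(\asymp1\), since otherwise the Jensen dilation term \(d\log(|x_j|+2r_j)\) stops being \(\Oh(1)\); and the dyadic discs at scale \(2^{-k}\) centred at \(1-c2^{-k}\) must have radius at most \(c2^{-k}/2+\Oh(1/d)\), which is why you genuinely need \(\Theta(1)\) discs per scale rather than one. Both are matters of constants and do not affect the conclusion, and your ``main obstacle'' paragraph correctly locates where the probabilistic content is.
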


Combining these results with cost of arithmetic operations, we can obtain precise complexity bounds for \nameref{alg:descartes}. Our objective is to complete the analogy between the Descartes count and the Strassman count, by giving an algorithm in the $p$-adic case that uses Strassman count with a similar complexity analysis.

\subsection[Algorithms based on counting II: the p-adic case]{Algorithms based on counting II: the $p$-adic case}

Over the $p$-adic numbers, subdivisions are a lot nicer due to the metric (and topological structure) which allows us to subdivide $\Zp$ into into pairwise disjoint balls that are closed and open at the same time. However, the subdivision step of a closed ball $\overline{B}(x,p^{-s})$ into the $p$ closed balls
\[\overline{B}(x,p^{-(s+1)}),\overline{B}(x+2p^{-s},p^{-(s+1)}),\ldots,\overline{B}(x+(p-1)p^{-s},p^{-(s+1)})\]
can be problematic for big primes. Fortunately for us, we can handle this using fast factorization over $\bbF_p$.

We can now use Strassman count in order to provide an algorithm for finding roots as we did with Descartes count to provide one in the real case. We give this algorithm, \nameref{alg:strassman}, in  Algorithm~\ref{alg:strassman}. We postpone the full complexity and precision analysis to Section~\ref{sec:complexity}.

\begin{algorithm*}[!htp]
\DontPrintSemicolon
\SetKwInOut{input}{Input}
\SetKwInOut{output}{Output}
\caption{\textsc{Strassman}}\label{alg:strassman}
\input{$f\in \Qp[T]$}
\precondition{$f$ does not have singular roots in $\Zp$}
\nonl\hrulefill

\tcc{Initial preparation}
$\ell_{\mathrm{in}}\leftarrow \St(f;0,1)$\;
$f_{\mathrm{in}}\leftarrow \sum_{k=0}^{\ell_{\mathrm{in}}} \frac{f_k}{f_{\ell_{\mathrm{in}}}}T^k$\tcp*{Normalization and truncation}
$\mcS\leftarrow \{(f_{\mathrm{in}};0,1;\ell_{\mathrm{in}})\}$\tcp*{Balls to subdivide}
$\mcZ\leftarrow \varnothing$\tcp*{Approximations to $p$-adic roots}
\tcc{Subdivision loop}
\Repeat{$\mcS=\varnothing$}{
Take $(g;x,p^{-s};\ell)\in\mcS$\;
\tcc{Subdivision step}
Find all $a_1,\ldots,a_l\in\{0,\ldots,p-1\}$ such that $|g(a_i)|<1$\tcp*{This means to solve $g\text{ (mod }p\text{)}$ since $\|g\|=1$ in our data structure}
\For{$i\in\{1,\ldots,l\}$}{
$x'\leftarrow x+a_ip^s$\tcp*{Update of the center of the ball}
$h\leftarrow f(x'+p^{s+1}T)$ truncated at degree $\ell$\tcp*{Note that $\St(h)=\St(f;x',p^{-(s+1)})$}
$\ell'\leftarrow \St(h)$\tcp*{Strassman count for $\overline{B}(x',p^{-(s+1)})$}
\uIf{$\ell'>1$}{
$\tilde{h}\leftarrow \sum_{k=0}^{\ell'} \frac{h_k}{h_{\ell'}}T^k$\tcp*{Normalization and truncation}
Add $\left(\tilde{h};x',p^{-(s+1)};\ell'\right)$ to $\mcS$\tcp*{Count too high, we need to subdivide}
}
\uElseIf{$\ell'=1$}{
Add $(x',p^{-(s+1)})$ to $\mcZ$\tcp*{Isolating ball found}
}
\Else{
\tcp{No root of $f$ in $\overline{B}(x',p^{-(s+1}))$}
}
}
}
\tcc{Return of the approximations}
\KwRet{$\mcZ$}

\nonl\hrulefill\\
\output{$(x_1,p^{-s_1}),\ldots,(x_r,p^{-s_r})\in \Zp\times p^{-\bbN}$}
\postcondition{$x_i\in [0,p^{s_i}-1]\cap\bbZ$\\
$\mcZ(f,\Zp)\subset \bigcup_i \overline{B}(x_i,p^{-s_i})$\\
For all $i$, $\#\,\overline{B}(x_i,p^{-s_i})\cap \mcZ(f,\Zp)=1$\\
the Newton iteration for $f$ starting at $x_i$ converges quadratically\\\hfill to the zero of $f$ in $\overline{B}(x_i,p^{-s_i})$}
\end{algorithm*}

To analyze \nameref{alg:strassman} we will follow the same path as we did with \nameref{alg:descartes}. In this way, we define the \emph{Strassman tree} $\mcT_p(f)$ as the tree whose vertices are the $(x,p^{-s})$ of the $(g;x,p^{-s};\ell)$ that belong to $\mcS$ during the computation of \nameref{alg:strassman}$(f)$ and are ordered by the inclusion of the $\overline{B}(x,p^{-s})$. Our objective is to bound not only the width and depth of this tree, but also the precision need for this algorithm to run correctly.

\begin{remark}
We note that there is a step of normalization and truncation of the polynomials. We observe that since $\St(f;x,p^{-s})$ decreases as we subdivide (Proposition~\ref{prop:strassmansubadditive}), we can just truncate the polynomials to that degree to save computation.
\end{remark}

\begin{remark}\label{rem:line7}
Line 7 of \nameref{alg:strassman} is the more problematic one. If $p$ is small, we can just go through the full $\bbF_p$ and do brute force. This would have a run-time of $\Oh(dp)$. If $p$ is large, this is not feasible. In that case, we compute $\gcd(g,x^p-x)\text{(mod }p\text{)}$, which can be done with run-time $\Oh(d^2\log p)$, and then we apply the Cantor-Zassenhaus factorization algorithm~\cite{cantorzassenhaus1981} which will take an average run-time of $\Oh(d^2\log^3d\log p)$. 

Hence, we have that line 7 can be done either in deterministic $\Oh(dp)$-time or in average $\Oh(d^2\log^3d\log p)$-time.
\end{remark}

\section{Condition numbers, separation and precision}\label{sec:condition}

In this section, we introduce the norm that we will working with $p$-adic polynomials. Using this norm, we define condition numbers, following a recipe analogous to that in~\cite{TCTcubeI-journal}, and show how it relates to the separation of the roots, Strassman count and the convergence of Newton's method---Hensel's lifting.

\subsection{Norms on polynomials}

Given a $p$-adic univariate polynomial $f=\sum_{k=0}^df_kT^k\in\Qp[T]$, we consider the following ultranorm
\begin{equation}
    \|f\|:=\max_{k}|f_k|.
\end{equation}
Using a norm we can quantify the perturbation of a polynomial.
The following proposition gives the main properties of the defined norm.

\begin{prop}\label{prop:norminequalities}
Let $f\in\Qp[T]$ be a $p$-adic polynomial. Then the following holds:
\begin{enumerate}
    \item[(e)] For every $k\in\bbN$ and $x\in\Zp$,
    \[|f^{(k)}(x)/k!|\leq \|f\|.\]
    In particular, $|f(x)|\leq \|f\|$ and $|f'(x)|\leq \|f\|$.
    \item[(i)] For every $x\in \Zp$,
    \[
    \|f(x+T)\|=\|f\|.
    \]
    In other words, the ultranorm $\|~\|$ is invariant under changes of variables coming from translations by an element in $\Zp$.
\end{enumerate}
\end{prop}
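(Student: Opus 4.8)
The plan is to derive both statements from the Taylor expansion of $f$ around a point of $\Zp$, together with the elementary fact that binomial coefficients are rational integers, hence $p$-adic integers of norm at most $1$. This is where the ultrametric nature of $|\cdot|$ on $\Qp$ does all the work.

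For part (e), I would write $f=\sum_{k=0}^d f_kT^k$ and expand
\[
\frac{f^{(j)}(T)}{j!}=\sum_{k=j}^d\binom{k}{j}f_kT^{k-j}.
\]
Evaluating at $x\in\Zp$, using $|x|\le 1$ and $\binom{k}{j}\in\bbZ\subset\Zp$ (so $|\binom{k}{j}|\le 1$), the ultrametric inequality gives
\[
\left|\frac{f^{(j)}(x)}{j!}\right|\le\max_{j\le k\le d}\left|\binom{k}{j}f_kx^{k-j}\right|\le\max_k|f_k|=\|f\|.
\]
The two ``in particular'' claims are just the cases $j=0$ and $j=1$.

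For part (i), I would use the Taylor expansion $f(x+T)=\sum_{j=0}^d\frac{f^{(j)}(x)}{j!}T^j$, so that the coefficients of $f(x+T)$ are precisely the numbers $f^{(j)}(x)/j!$; by part (e) each has norm at most $\|f\|$, hence $\|f(x+T)\|\le\|f\|$. For the reverse inequality I would apply the same bound to $g(T):=f(x+T)$ and the translation by $-x\in\Zp$: since $g(-x+T)=f(T)$, the coefficients of $f$ are the numbers $g^{(j)}(-x)/j!$, each of norm at most $\|g\|=\|f(x+T)\|$ by part (e) applied to $g$; hence $\|f\|\le\|f(x+T)\|$. Combining the two inequalities yields the equality.

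There is no real obstacle here; the only points requiring care are (a) recalling that $\binom{k}{j}$ is an \emph{integer}, so it lies in $\Zp$ with norm $\le 1$, and (b) the small bookkeeping needed to get both directions of the equality in (i) by exploiting that $\Zp$ is closed under $x\mapsto -x$, so translations by $\Zp$-elements are symmetric.
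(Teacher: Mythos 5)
Your proof is correct and follows essentially the same argument as the paper: part (e) via the Taylor-expansion identity $f^{(j)}(T)/j! = \sum_k \binom{k}{j} f_k T^{k-j}$ together with $|\binom{k}{j}|\le 1$, $|x|\le 1$, and the ultrametric inequality; part (i) via Taylor's theorem to identify the coefficients of $f(x+T)$ with $f^{(j)}(x)/j!$, applying (e) to get $\le$, and then translating back by $-x$ to get the reverse inequality.
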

\begin{proof}
(e) We have that
\[
|f^{(k)}(x)/k!|=\left|\sum_{l\geq k}\binom{l}{k}f_lx^{l-k}\right|\leq \max_{l\geq k}\left|\binom{l}{k}\right||f_l||x|^{l-k}\leq \|f\|,
\]
since $\left|\binom{l}{k}\right|\leq 1$ and $|x|\leq 1$.

(i) Note that the coeffcients of $f(x+T)$ are precisely the $f^{(k)}(x)/k!$ due to Taylor's theorem. Hence, by (e), $\|f(x+T)\|\leq\|f\|$. Now, $f$ is obtained from $f(x+T)$ by doing a translation by $-x$. Therefore, by the same argument, $\|f\|\leq\|f(x+T)\|$, obtaining the desired equality.
\end{proof}

The following proposition will be useful later on. It shows that the norm controls the Lipschitz property of the derivative of a polynomial.

\begin{prop}\label{prop:Lipschitzpolynomial}
Let $f\in\Qp[T]$ be a $p$-adic polynomial. Then for all $x,y\in\Zp$,
\[
\left|\frac{|f^{(k)}(y)/k!|}{\|f\|}\right|\leq \max\left\{\left|\frac{|f^{(k)}(y)/k!|}{\|f\|}\right|,|x-y|\right\}.
\]
The previous relation holds with 
equality if $|x-y|<\left|\frac{|f^{(k)}(y)/k!|}{\|f\|}\right|$.
\end{prop}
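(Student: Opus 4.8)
\emph{Proof plan.} I read the displayed inequality as the Lipschitz-type bound
\[
\frac{|f^{(k)}(x)/k!|}{\|f\|}\ \le\ \max\Bigl\{\,\frac{|f^{(k)}(y)/k!|}{\|f\|},\ |x-y|\,\Bigr\},
\]
valid for every $k\in\bbN$ (the statement as typeset appears to have $x$ replaced by $y$ on the left). The entire argument is a single application of the ultrametric inequality after re-expanding $f$ around $y$.

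First I would record the identity
\[
\frac{f^{(k)}(x)}{k!}=\sum_{j\ge k}\binom{j}{k}\,\frac{f^{(j)}(y)}{j!}\,(x-y)^{j-k},
\]
which one gets by writing $f(x+T)=f\bigl(y+(x-y)+T\bigr)$, expanding with Taylor's theorem around $y$ and with the binomial theorem in $(x-y)+T$, and comparing the coefficients of $T^k$. Splitting off the $j=k$ summand $f^{(k)}(y)/k!$, every remaining term has $j-k\ge 1$; since $\binom{j}{k}\in\bbZ$ gives $\bigl|\binom{j}{k}\bigr|\le 1$, since $|f^{(j)}(y)/j!|\le\|f\|$ by Proposition~\ref{prop:norminequalities}(e), and since $|x-y|\le 1$ (because $x,y\in\Zp$) forces $|x-y|^{j-k}\le|x-y|$, the ultrametric inequality yields
\[
\Bigl|\frac{f^{(k)}(x)}{k!}\Bigr|\le\max\Bigl\{\Bigl|\frac{f^{(k)}(y)}{k!}\Bigr|,\ \|f\|\,|x-y|\Bigr\}.
\]
Dividing by $\|f\|$ gives the asserted bound.

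For the equality claim, suppose $|x-y|<|f^{(k)}(y)/k!|/\|f\|$, i.e. $\|f\|\,|x-y|<|f^{(k)}(y)/k!|$. Then in the sum above the term $f^{(k)}(y)/k!$ has strictly larger absolute value than every other term (each of the latter being bounded by $\|f\|\,|x-y|^{j-k}\le\|f\|\,|x-y|$), so the strict form of the ultrametric inequality gives $|f^{(k)}(x)/k!|=|f^{(k)}(y)/k!|$; and since in this regime $\max\{|f^{(k)}(y)/k!|/\|f\|,\,|x-y|\}=|f^{(k)}(y)/k!|/\|f\|$, this is exactly equality in the proposition. I do not expect any real obstacle here: the only points demanding care are the combinatorial re-expansion identity for the Taylor coefficient and bookkeeping which term dominates in the ultrametric estimates, together with the (automatic) use of $|x-y|\le 1$ to bound $|x-y|^{j-k}\le|x-y|$.
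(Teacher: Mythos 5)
Your reading of the obvious typo (the left-hand side should have $x$ in place of $y$—equivalently, in the paper's own proof the $x$ sits in the first term of the max on the right) is correct, and your argument is essentially the paper's: both expand the Taylor coefficient at one point around the other, isolate the $j=k$ term, and apply the ultrametric inequality together with Proposition~\ref{prop:norminequalities}(e) and $|x-y|\le 1$; the paper phrases this via the difference $f^{(k)}(y)/k!-f^{(k)}(x)/k!$ after normalizing $\|f\|=1$, which is only cosmetically different from your splitting off the leading summand. Your treatment of the equality case by identifying the strictly dominant term is a direct instance of the strict ultrametric equality, whereas the paper appeals to symmetry in $x$ and $y$—both are fine.
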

\begin{proof}
Without loss of generality, we can assume that $\|f\|=1$. By Taylor's expansion, $f^{(k)}(y)/k!-f^{(k)}(x)/k!=\sum_{l\geq 1}\binom{k+l}{k}\left(f^{(k+l)}(x)/(k+l)!\right)(y-x)^{l}$. Thus, taking absolute values, applying the ultrametric inequality and Proposition~\ref{prop:norminequalities}, we obtain
\[
|f^{(k)}(y)/k!-f^{(k)}(x)/k!|\leq |x-y|.
\]
Thus $|f^{(k)}(y)/k!|\leq \max\{|f^{(k)}(x)/k!|, |x-y|\}$. The equality case follows from exchanging $x$ and $y$ under the given assumption.
\end{proof}

\subsection{Condition numbers and their properties}

We define the condition number over the $p$-adics following the definition in~\cite{CKMW1} for the complex case. 

\begin{defi}
Let $f\in\Qp[T]$. The \emph{local condition number of $f$ at $x\in\Zp$} is 
\begin{equation}
    \kappa(f,x):=\frac{\|f\|}{\max\{|f(x)|,|f'(x)|\}}\in (0,\infty].
\end{equation}
The \emph{global condition number of $f$} is 
\begin{equation}
    \kappa(f):=\sup_{z\in\Zp}\kappa(f,z)\in (0,\infty].
\end{equation}
\end{defi}

Note that $\kappa(f,x)$ is infinity if and only if $x$ is a singular root of $f$. Thus $\kappa(f)$ is finite as long as $f$ does not have singular roots in $\Zp$. Intuitively, the bigger $\kappa(f)$ is, the nearer $f$ is of having a singular zero in $\Zp$. The following theorem 
quantifies this statement and summarizes the main properties of $\kappa$---following the terminology introduced in~\cite{tonellicuetothesis}. We can consider it as a so-called condition number theorem.

\begin{theo}\label{theo:conditionproperties}
Let $f\in \Qp[T]$ be a $p$-adic polynomial and $x\in\Zp$. Then the following holds:
\begin{enumerate}
    \item[(0)] \emph{Bounds}: $1\leq \kappa(f,x)\leq\kappa(f)$.
    \item[(1)] \emph{Regularity inequality}: Either $|f(x)|/\|f\|\geq 1/\kappa(f,x)$ or $\|f'(x)\|/\|f\|\geq 1/\kappa(f,x)$.
    \item[(2)] \emph{1st Lipschitz property}: For every $g\in \Qp[T]$,
    \[
    \frac{\|g\|}{\kappa(g,x)}\leq \max\left\{\frac{\|f\|}{\kappa(f,x)},\|g-f\|\right\},
    \]
    with equality if $\|g-f\|/\|f\|<1/\kappa(f,x)$; and
    \[
    \frac{\|g\|}{\kappa(g)}\leq \max\left\{\frac{\|f\|}{\kappa(f)},\|g-f\|\right\},
    \]
    with equality if $\|g-f\|/\|f\|<1/\kappa(f)$.
    \item[(3)] \emph{2nd Lipschitz property}: For every $y\in\Zp$,
    \[
    \frac{1}{\kappa(f,y)}\leq \max\left\{\frac{1}{\kappa(f,x)},|y-x|\right\},
    \]
    with equality if $\kappa(f,x)|x-y|<1$.
    \item[(4)] \emph{Condition number theorem}: Let
    \[
    \Sigma_x:=\{g\in\Qp[T]\mid g(x)=g'(x)=0\}~\text{ and }~\Sigma:=\bigcup_{z\in\Zp}\Sigma_z
    \]
    be the set of $p$-adic polynomials with a multiple root at $x$
    and the set of $p$-adic polynomial with (at least one) multiple root in $\Zp$, respectively. Then
    \[
    \kappa(f,x)=\frac{\|f\|}{\dist(f,\Sigma_x)}~\text{ and }\kappa(f)=\frac{\|f\|}{\dist(f,\Sigma)}.
    \]
    \item[(5)] \emph{Higher derivative estimate}: If $\kappa(f,x)\|f'(x)\|/\|f\|\geq 1$, then
    \begin{equation}
        \gamma(f,x)\leq \kappa(f,x) ,
    \end{equation}
    where
    \[
    \gamma(f,x):=\begin{cases}\max_{k\geq 2}\left|f'(x)^{-1}f^{(k)}(x)/k!\right|^{\frac{1}{k-1}},&\text{if }f'(x)\neq 0\\\infty,&\text{otherwise}\end{cases}
    \]
    is \emph{Smale's $\gamma$ of $f$ at $x$}.
\end{enumerate}
\end{theo}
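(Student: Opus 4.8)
\textbf{Proof proposal for Theorem~\ref{theo:conditionproperties}.}

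The plan is to prove the seven items essentially in order, since later ones lean on earlier ones and on the two preparatory propositions (Proposition~\ref{prop:norminequalities} and Proposition~\ref{prop:Lipschitzpolynomial}). For item~(0), the upper bound $\kappa(f,x)\le\kappa(f)$ is immediate from the definition of $\kappa(f)$ as a supremum; for the lower bound $1\le\kappa(f,x)$ I would invoke Proposition~\ref{prop:norminequalities}(e), which gives $|f(x)|\le\|f\|$ and $|f'(x)|\le\|f\|$, hence $\max\{|f(x)|,|f'(x)|\}\le\|f\|$ and $\kappa(f,x)\ge 1$. Item~(1), the regularity inequality, is just the contrapositive restatement of the definition: if both $|f(x)|/\|f\|<1/\kappa(f,x)$ and $|f'(x)|/\|f\|<1/\kappa(f,x)$ then $\max\{|f(x)|,|f'(x)|\}/\|f\|<1/\kappa(f,x)$, contradicting the definition of $\kappa(f,x)$ as $\|f\|/\max\{|f(x)|,|f'(x)|\}$.

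For item~(3), the 2nd Lipschitz property, I would apply Proposition~\ref{prop:Lipschitzpolynomial} with $k=0$ and $k=1$ (after normalizing $\|f\|=1$): this yields $|f(y)|\le\max\{|f(x)|,|x-y|\}$ and $|f'(y)|\le\max\{|f'(x)|,|x-y|\}$, hence $\max\{|f(y)|,|f'(y)|\}\le\max\{|f(x)|,|f'(x)|,|x-y|\}$, which is exactly $1/\kappa(f,y)\le\max\{1/\kappa(f,x),|y-x|\}$; the equality case comes from the equality case of Proposition~\ref{prop:Lipschitzpolynomial} (when $|x-y|$ is strictly dominated, i.e. $\kappa(f,x)|x-y|<1$, the max is controlled by the $x$-terms and one can run the argument symmetrically). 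Item~(2), the 1st Lipschitz property, is the analogue in the polynomial direction: since evaluation $f\mapsto(f(x),f'(x))$ is linear and $1$-Lipschitz from $(\Qp[T],\|\cdot\|)$ by Proposition~\ref{prop:norminequalities}(e), we get $|g(x)-f(x)|\le\|g-f\|$ and $|g'(x)-f'(x)|\le\|g-f\|$, so by the ultrametric inequality $\max\{|g(x)|,|g'(x)|\}\le\max\{\max\{|f(x)|,|f'(x)|\},\|g-f\|\}$; rewriting both sides in terms of $\kappa$ gives the first inequality, and taking the supremum over $x\in\Zp$ gives the global version. The equality cases follow from the standard fact that in an ultrametric, $|a-b|<|a|$ forces $|b|=|a|$.

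Item~(4), the condition number theorem, is the one I expect to require the most care. The inequality $\dist(f,\Sigma_x)\le\|f\|/\kappa(f,x)$ follows from item~(2): any $g\in\Sigma_x$ has $\|g\|/\kappa(g,x)=0$ (as $g(x)=g'(x)=0$ makes $\kappa(g,x)=\infty$), so the first displayed inequality of~(2) forces $\|f\|/\kappa(f,x)\le\|g-f\|$, i.e. $\|f\|/\kappa(f,x)\le\dist(f,\Sigma_x)$ --- wait, this gives the wrong direction, so instead I would argue the reverse inequality by explicitly constructing a nearby polynomial in $\Sigma_x$: subtract from $f$ a polynomial of the form $\alpha + \beta(T-x)$ (or rather $\alpha\cdot 1 + \beta\cdot(T-x)$ reduced suitably) chosen so that the result vanishes to order $2$ at $x$, whose norm is $\max\{|f(x)|,|f'(x)|\}=\|f\|/\kappa(f,x)$; and the matching lower bound $\dist(f,\Sigma_x)\ge\|f\|/\kappa(f,x)$ is exactly item~(2) applied to any $g\in\Sigma_x$. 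The global statement $\kappa(f)=\|f\|/\dist(f,\Sigma)$ then follows by taking suprema/infima over $z\in\Zp$, using that $\dist(f,\Sigma)=\inf_z\dist(f,\Sigma_z)$ and that the map $z\mapsto$ (optimal perturbation) can be controlled; the delicate point is checking that $\Sigma$ is closed (so the infimum is attained, or at least that no cancellation occurs with the ultrametric), which is where I would be most careful --- I suspect the acknowledgement to ``Gato Suchen for a critical suggestion regarding the proof of Theorem~\ref{theo:conditionproperties}'' refers precisely to this subtlety. Finally, for item~(5), the higher derivative estimate, I would normalize $\|f\|=1$ so that the hypothesis reads $|f'(x)|\ge 1/\kappa(f,x)$, hence $\kappa(f,x)=1/|f'(x)|$ (since then $|f'(x)|\ge|f(x)|$ too, or at least $\max\{|f(x)|,|f'(x)|\}=|f'(x)|$); then for each $k\ge 2$, Proposition~\ref{prop:norminequalities}(e) gives $|f^{(k)}(x)/k!|\le 1$, so $|f'(x)^{-1}f^{(k)}(x)/k!|\le|f'(x)|^{-1}=\kappa(f,x)$, and raising to the power $1/(k-1)$ and using $\kappa(f,x)\ge 1$ (so that $\kappa(f,x)^{1/(k-1)}\le\kappa(f,x)$) yields $\gamma(f,x)\le\kappa(f,x)$.
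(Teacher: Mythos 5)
Your proposal is correct and follows essentially the same route as the paper for every item: (0) from Proposition~\ref{prop:norminequalities}(e), (1) directly from the definition, (2) from linearity of evaluation and the ultrametric inequality with the standard equality criterion, (3) from Proposition~\ref{prop:Lipschitzpolynomial} at $k=0,1$ after normalizing $\|f\|=1$, (4) from the 1st Lipschitz property for one inequality and the explicit witness $f-f(x)-f'(x)(T-x)\in\Sigma_x$ for the other, and (5) from the regularity inequality plus Proposition~\ref{prop:norminequalities}(e) plus $\kappa(f,x)\ge 1$. One small remark on (4): your worry about $\Sigma$ being closed is unfounded — the global identity follows mechanically from $\dist(f,\Sigma)=\inf_{z\in\Zp}\dist(f,\Sigma_z)$ and $\kappa(f)=\sup_{z}\kappa(f,z)$ with values allowed in $(0,\infty]$, with no need for the infimum to be attained; the paper's proof simply says the global version ``follows from the local one by minimizing over all $x$.''
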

\begin{proof}
(0) This follows from Proposition~\ref{prop:norminequalities}.

(1) This is immediate from the definition of $\kappa$.

(2) We only prove the claim for the local condition number. For the global condition number, the claim follows by minimizing over $x\in\Zp$. We have that
\begin{align*}
    \|g\|/\kappa(g,x)&=\max\{|g(x)|,|g'(x)|\}\\
    &= \max\{|f(x)+(g-f)(x)|,|f'(x)+(g-f)'(x)|\}\\
    &\leq \max\{|f(x)|,|(g-f)(x)|,|f'(x)|,|(g-f)'(x)|\}&\text{(Ultrametric inequality)}\\
    &\leq \max\{|f(x)|,\|g-f\|,|f'(x)|,\|g-f\|\}&\text{(Proposition~\ref{prop:norminequalities}}\\
    &=\max\{\|f\|/\kappa(f,x),\|g-f\|\}.
\end{align*}
For the equality case, note that if $\|g-f\|/\|f\|<1/\kappa(f,x)$, then $\|g\|/\kappa(g,x)\leq \|f\|/\kappa(f,x)$ and, also, by symmetry,
\[
\|f\|/\kappa(f,x)\leq \max\{\|g\|/\kappa(g,x),\|g-f\|\}=\|g\|/\kappa(g,x),
\]
where the last equality follows from the fact that $\|g-f\|$ is smaller than $\|f\|/\kappa(f,x)$, so it cannot be the maximum bounding $1/\kappa(f,x)$. Thus $\|g\|/\kappa(g,x)= \|f\|/\kappa(f,x)$.

(3) Without loss of generality, we can assume that $\|f\|=1$ by scaling $f$ by an appropiate power of $p$. We have that
\begin{align*}
    1/\kappa(f,y)&=\max\{|f(y)|,|f'(y)|\}&\\
    &\leq \max\{|f(x)|,|f'(x)|,|y-x|\} &\text{(Proposition~\ref{prop:Lipschitzpolynomial})}\\
    &=\max\{1/\kappa(f,x),|y-x|\} .
\end{align*}
To prove the equality, we interchange $x$ and $y$ and argue,
mutatis mutandis, as in (2).

(4) We only prove the local version. The global version follows from the global one by minimizing over all $x$. By the 1st Lipschitz property, for every $g\in \Sigma_x$,
\[
\|f\|/\kappa(f,x)\leq \max\{0,\|f-g\|\}=\|f-g\|,
\]
since $\|g\|/\kappa(g,x)=0$. Thus $\kappa(f,x)\geq \|f\|/\dist(f,\Sigma)$.

To prove the other inequality, note that
$f-f(x)-f'(x)T\in\Sigma_x$.
Thus
\[
\dist(f,\Sigma)\leq \|f(x)+f'(x)T\|=\max\{|f(x)|,|f'(x)|\}=\|f\|/\kappa(f,x).
\]
Hence $\kappa(f,x)\leq \|f\|/\dist(f,\Sigma_x)$.

(5) Under the given assumption, the regularity inequality implies $\|f\|/|f'(x)|\leq \kappa(f,x)$. Hence
\[
\gamma(f,x)=\max_{k\geq 2}\left(\frac{|f^{(k)}(x)/k!|}{|f'(x)|}\right)^{\frac{1}{k-1}}\leq \max_{k\geq 2}\left(\frac{\|f\|}{|f'(x)|}\right)^{\frac{1}{k-1}}\leq \max_{k\geq 2}\kappa(f,x)^{\frac{1}{k-1}},
\]
where the first inequality follows from Proposition~\ref{prop:norminequalities}. Now, $\kappa(f,x)\geq 1$, so the right-hand side is bounded by $\kappa(f,x)$,
that concludes the proof.
\end{proof}

The following proposition relates the local condition number to Strassman's count. We will give in the next subsection an alternative proof which uses the condition-based separation bounds to prove the stated result.

\begin{prop}\label{prop:conditionminimum}
Let $f\in\Qp[T]$ be a $p$-adic polynomial and $x\in\Zp$. For all
\begin{equation}
    s\geq 1+\frac{\log\kappa(f,x)}{\log p},
\end{equation}
it holds $\St(f;x,p^{-s})\leq 1$.
\end{prop}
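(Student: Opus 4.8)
The plan is to reduce the statement to a claim about the normalized, translated polynomial $g(T) := f(x+p^s T)$ and to show that under the hypothesis $s \geq 1 + \log_p \kappa(f,x)$ we have $\St(g) \leq 1$. Recall $\St(f;x,p^{-s}) = \St(g)$ by definition. Writing $g = \sum_k g_k T^k$, Taylor's theorem gives $g_k = f^{(k)}(x)/k! \cdot p^{sk}$, so $|g_k| = |f^{(k)}(x)/k!| \, p^{-sk}$. Thus $\St(g) \leq 1$ is equivalent to: for every $k \geq 2$, $|g_k| < \max\{|g_0|,|g_1|\}$, i.e.
\[
|f^{(k)}(x)/k!| \, p^{-sk} < \max\{|f(x)|, |f'(x)| \, p^{-s}\} \quad \text{for all } k \geq 2.
\]
I would treat the two terms of the right-hand side according to the regularity inequality (Theorem~\ref{theo:conditionproperties}(1)): either $|f(x)|/\|f\| \geq 1/\kappa(f,x)$ or $|f'(x)|/\|f\| \geq 1/\kappa(f,x)$.

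First I would handle the case $|f'(x)|/\|f\| \geq 1/\kappa(f,x)$, which is the one where Smale's $\gamma$ is available. Here the hypothesis of Theorem~\ref{theo:conditionproperties}(5) is met, so $\gamma(f,x) \leq \kappa(f,x)$, which by definition of $\gamma$ means $|f'(x)^{-1} f^{(k)}(x)/k!|^{1/(k-1)} \leq \kappa(f,x)$ for all $k \geq 2$, i.e. $|f^{(k)}(x)/k!| \leq |f'(x)| \, \kappa(f,x)^{k-1}$. Using the numerical-estimate bound $\kappa(f,x) \leq p^{s-1}$ coming from the hypothesis, we get $|f^{(k)}(x)/k!| \leq |f'(x)| \, p^{(s-1)(k-1)}$. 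Then
\[
|f^{(k)}(x)/k!| \, p^{-sk} \leq |f'(x)| \, p^{(s-1)(k-1) - sk} = |f'(x)| \, p^{-s} \cdot p^{-(k-1)} < |f'(x)| \, p^{-s}
\]
for all $k \geq 2$, which is exactly what we need (the right-hand side is $\leq \max\{|f(x)|, |f'(x)|p^{-s}\}$, so $g_1$ already dominates all higher coefficients and $\St(g) \leq 1$).

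Next the case $|f(x)|/\|f\| \geq 1/\kappa(f,x)$. Now I want $|f^{(k)}(x)/k!| \, p^{-sk} < |f(x)|$ for $k \geq 2$. Using $|f^{(k)}(x)/k!| \leq \|f\|$ from Proposition~\ref{prop:norminequalities}(e) and $\|f\| \leq |f(x)| \, \kappa(f,x) \leq |f(x)| \, p^{s-1}$, we obtain $|f^{(k)}(x)/k!| \, p^{-sk} \leq |f(x)| \, p^{s-1-sk} = |f(x)| \, p^{-1 - s(k-1)} < |f(x)|$ for all $k \geq 1$ (since $s(k-1) \geq 0$ and there is a strict $p^{-1}$ to spare). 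This again forces $\St(g) \leq 1$, completing the proof. I should double-check the convention for $\St$ when the dominant index is $0$ versus $1$, but in both cases the definition $\St(g) = \max\{k : |g_l| \leq |g_k| \text{ for all } l < k\}$ gives $\St(g) \leq 1$ once every $k \geq 2$ fails to satisfy the defining condition.

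The main obstacle is the first case: one must invoke property (5) of the condition number theorem, whose hypothesis $\kappa(f,x)\|f'(x)\|/\|f\| \geq 1$ is precisely the alternative supplied by the regularity inequality, so the case split is forced on us and one must be careful that $\gamma(f,x) \leq \kappa(f,x)$ is genuinely applicable (it is, since $f'(x) \neq 0$ in this branch — otherwise $\kappa(f,x) = \infty$ and the hypothesis $s \geq 1 + \log_p \kappa(f,x)$ would be vacuous). Everything else is routine manipulation of ultrametric absolute values and powers of $p$. An alternative, cleaner route that the paper hints at ("an alternative proof which uses the condition-based separation bounds") would deduce the result from a separation bound on the roots of $f$ in $\Cp$ together with Theorem~\ref{theo:strassmannexactforcomplex}: if all pairs of roots in the closed unit ball are at distance $> p^{-s}$ from $x$ being ambiguous, the ball $\overline{B}(x,p^{-s})$ can contain at most one; but developing that requires the separation estimates not yet stated in the excerpt, so I would use the direct computation above.
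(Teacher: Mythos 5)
Your proof is correct, but it takes a more roundabout route than the paper's. The paper proves the proposition in one stroke, without any case split and without invoking Smale's $\gamma$: from the definition of $\kappa(f,x)$ and the hypothesis $s \geq 1 + \log_p\kappa(f,x)$ one has $\|f\| \leq p^{s-1}\max\{|f(x)|,|f'(x)|\}$, and then for every $k\geq 2$,
\[
|f^{(k)}(x)/k!|\,p^{-ks} \;\leq\; \|f\|\,p^{-ks} \;\leq\; p^{-(k-1)s-1}\max\{|f(x)|,|f'(x)|\} \;<\; p^{-s}\max\{|f(x)|,|f'(x)|\} \;\leq\; \max\{|f(x)|,\,|f'(x)|p^{-s}\},
\]
using only $|f^{(k)}(x)/k!|\leq\|f\|$ from Proposition~\ref{prop:norminequalities}(e). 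This is precisely your Case~2 argument with $|f(x)|$ replaced throughout by $\max\{|f(x)|,|f'(x)|\}$, and that replacement makes the regularity-inequality dichotomy unnecessary. In your Case~1 the higher-derivative estimate $\gamma(f,x)\leq\kappa(f,x)$ does give the slightly sharper bound $|f^{(k)}(x)/k!|\leq |f'(x)|\,\kappa(f,x)^{k-1}$ in place of $|f^{(k)}(x)/k!|\leq\|f\|$, but no such sharpening is needed here; it buys nothing for the conclusion $\St(f;x,p^{-s})\leq 1$ and costs the extra machinery of Theorem~\ref{theo:conditionproperties}(5). So: correct, but the case split and the appeal to Smale's $\gamma$ are superfluous. (Your closing remark about the separation-bound route is also right: the paper gives exactly that as an \emph{alternative} proof in the subsection on separation bounds, after Corollary~\ref{cor:kappaseparation}.)
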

\begin{proof}
By the choice of $s$, we have that $\|f\|\leq p^{s-1}\max\{|f(x)|,|f'(x)|\}$. Therefore for all $k\geq 2$,
\begin{multline*}
    |f^{(k)}(x)/k!|/p^{ks}\leq \|f\|/p^{ks}\leq p^{-(k-1)s-1}\max\{|f(x)|,|f'(x)|\}\\<p^{-s}\max\{|f(x)|,|f'(x)|\}\leq \max\{|f(x)|,|f'(x)|/p^s\},
\end{multline*}
where the first inequality follows from Proposition~\ref{prop:norminequalities}, the second from the coice of $s$---see first sentences in this proof---, and the third one follows from $k\geq 2$.

Hence $\St(f;x,p^{-s})\leq 1$, since the absolute value of the coefficients of $f(x+p^sT)$ are $|f(x)|,|f'(x)|/p^s,\ldots,|f^{(k)}(x)/k!|/p^{ks},\ldots$, we conclude the proof.
\end{proof}

\subsection[Smale's alpha theory, Newton's method and Hensel's lemma]{Smale's $\alpha$-theory, Newton's method and Hensel's lemma}

Smale's $\alpha$-theory gives sufficient conditions for the convergence of the Newton's method. In the $p$-adic univariate setting, Smale's $\alpha$-theory---reproducing the proofs with the ultrametric inequality---reduces itself to the famous Hensel's lemma~\cite[pp.~70--72]{gouveabook}. In the multivariate setting, this gives better criteria than the criteria based on the Jacobian~\cite{conradhensel}.
In our understanding, this version of Smale's $\alpha$-theory is unknown in the $p$-adic setting; thus we present it in detail 
in the Appendix~\ref{sec:smalealphatheory}.

To define Smale's $\alpha$-theory in the univariate $p$-adic setting we need to introduce the Smale's parameters as follows:
\begin{defi}
Let $f\in\Qp[T]$ and $x\in\Qp$. Then, we define the following quantities:
\begin{enumerate}[(a)]
    \item \emph{Smale's $\alpha$}: $\alpha(f,x):=\beta(f,x)\gamma(f,x)$, if $f'(x)\neq 0$, and $\alpha(f,x):=\infty$, otherwise.
    \item \emph{Smale's $\beta$}: $\beta(f,x):=|f(x)/f'(x)|$, if $f'(x)\neq 0$, and $\alpha(f,x):=\infty$, otherwise.
    \item \emph{Smale's $\gamma$}: $\gamma(f,x):=\max_{k\geq 2}\left|\frac{f^{(k)}(x)}{k!f'(x)}\right|^{\frac{1}{k-1}}$, if $f'(x)\neq 0$, and $\gamma(f,x):=\infty$, otherwise.
\end{enumerate}
\end{defi}
\begin{remark}
For a root $\zeta$ of $f$, we notice that
\[-\frac{\log\gamma(f,\zeta)}{\log p}\]
is the first slope of the Newton polygon of $(T-\zeta)f(T)$. This provides a nice interpretation of Smale's $\gamma$ in the $p$-adic univariate case. Moreover, as we will show in Theorem~\ref{theo:gammaseparationtheorem}, it has also a geometric relation to the separation of the complex $p$-adic roots of a polynomial.
\end{remark}

We can consider the \emph{Newton operator}
\[\newton_f:x\mapsto x-f(x)/f'(x) ,\]
at those points where $f'$ is non-zero. In general, if we do not choose the point $x$ carefully, the Newton operator does not converge ---using Hensel's lemma terminology, the approximate root does not lift. So the question is: under which conditions can we guarantee that the sequence 
\[
x,\newton_f(x),\newton_f^2(x),\ldots,
\]
is well-defined and converges to a root of $f$ fast? Smale's $\alpha$-theorem gives sufficient conditions for this convergence to happen. Moreover, it gives conditions under which the convergence is quadratic ---the number of exact digits doubles at each iteration. Smale's $\gamma$-theorem gives the same guarantees for points sufficiently close to a non-singular root. In the $p$-adic setting, we can unify these two theorems as follows:

\begin{theo}[$p$-adic Smale's $\alpha$/$\gamma$-theorem]\label{theo:padicSmalealphatheory}
Let $f\in\Cp[T]$ and $x\in\Cp$. Then the following are equivalent:
\begin{itemize}
    \item[($\alpha$)] ($\alpha$-criterion) $\alpha(f,x)<1$.
    \item[($\gamma$)] ($\gamma$-criterion) $\dist(x,f^{-1}(0))<1/\gamma(f,x)$.
\end{itemize}
Moreover, if any of the above (equivalent) conditions holds, then the Newton sequence, $\{\newton_f^k(x)\}$, is well-defined and it converges quadratically to a non-singular root $\zeta$ of $f$. 
In particular, for all $k$, the following holds:
\begin{enumerate}
    \item[(a)] $\alpha(f,\newton_f^k(x))\leq \alpha(f,x)^{2^{k}}$.
    \item[(b)] $\beta(f,\newton_f^k(x))\leq \beta(f,x)\alpha(f,x)^{2^{k}}$.
    \item[(c)] $\gamma(f,\newton_f^k(x))\leq \gamma(f,x)$.
    \item[(Q)] $|\newton_f^k(x)-\zeta|=\beta(f,\newton_f^k(x))\leq \alpha(f,x)^{2^{k}}\beta(f,x)<\alpha(f,x)^{2^{k}}/\gamma(f,x)$.
\end{enumerate}
\end{theo}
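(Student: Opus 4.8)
The plan is to exploit the ultrametric inequality to make the classical real/complex $\alpha$-theory collapse to something much cleaner. First I would set up the basic Newton estimate: writing $y = \newton_f(x) = x - f(x)/f'(x)$, Taylor-expand $f$ around $x$ to get $f(y) = \sum_{k\geq 2} \frac{f^{(k)}(x)}{k!}(y-x)^k$, since the degree-$0$ and degree-$1$ terms cancel by the definition of the Newton step. Taking absolute values and using the ultrametric inequality, $|f(y)| \leq \max_{k\geq 2}\bigl|\tfrac{f^{(k)}(x)}{k!}\bigr|\,|y-x|^k$. Since $|y-x| = \beta(f,x)$ and $\bigl|\tfrac{f^{(k)}(x)}{k!f'(x)}\bigr| \leq \gamma(f,x)^{k-1}$ by the definition of $\gamma$, this gives $|f(y)/f'(x)| \leq \max_{k\geq 2}\gamma(f,x)^{k-1}\beta(f,x)^k = \beta(f,x)\max_{k\geq 2}(\gamma(f,x)\beta(f,x))^{k-1} = \beta(f,x)\alpha(f,x)$ whenever $\alpha(f,x) < 1$ (so the max over $k\geq 2$ is attained at $k=2$). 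A parallel computation differentiating the Taylor expansion, or estimating $f'(y)$ directly via $f'(y) = f'(x) + \sum_{k\geq 2}\frac{f^{(k)}(x)}{(k-1)!}(y-x)^{k-1}$, shows $|f'(y)/f'(x) - 1| \leq \max_{k\geq 2}\gamma(f,x)^{k-1}\beta(f,x)^{k-1} = \alpha(f,x) < 1$, so by the ultrametric inequality $|f'(y)| = |f'(x)|$ exactly; in particular $f'(y)\neq 0$ and the Newton step is again well-defined.

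Next I would combine these into the one-step contraction estimates. From $|f(y)| \leq |f'(x)|\beta(f,x)\alpha(f,x)$ and $|f'(y)| = |f'(x)|$ we get $\beta(f,y) = |f(y)/f'(y)| \leq \beta(f,x)\alpha(f,x)$. For $\gamma$: since $\gamma(f,y) = \max_{k\geq 2}\bigl|\tfrac{f^{(k)}(y)}{k!f'(y)}\bigr|^{1/(k-1)}$, I Taylor-expand each $f^{(k)}(y)/k!$ around $x$; the ultrametric inequality bounds each such coefficient by the maximum over higher-order terms, and a short computation shows $\bigl|\tfrac{f^{(k)}(y)}{k!}\bigr| \leq \max_{j\geq k}\gamma(f,x)^{j-1}|f'(x)|\,\beta(f,x)^{j-k} \leq \gamma(f,x)^{k-1}|f'(x)|$ using $\gamma(f,x)\beta(f,x) = \alpha(f,x) \leq 1$; dividing by $|f'(y)| = |f'(x)|$ and taking $(k-1)$-th roots gives $\gamma(f,y) \leq \gamma(f,x)$, which is (c). Then $\alpha(f,y) = \beta(f,y)\gamma(f,y) \leq \beta(f,x)\alpha(f,x)\cdot\gamma(f,x) = \alpha(f,x)^2$. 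Iterating gives (a) $\alpha(f,\newton_f^k(x)) \leq \alpha(f,x)^{2^k}$, and then (b) follows by telescoping $\beta(f,\newton_f^{k}(x)) \leq \beta(f,\newton_f^{k-1}(x))\alpha(f,\newton_f^{k-1}(x)) \leq \cdots \leq \beta(f,x)\prod_{j=0}^{k-1}\alpha(f,x)^{2^j} = \beta(f,x)\alpha(f,x)^{2^k-1} \leq \beta(f,x)\alpha(f,x)^{2^k}$ (this last inequality is slightly wasteful but matches the stated bound; I would double-check the exponent bookkeeping here).

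For convergence and (Q): the sequence $x_k := \newton_f^k(x)$ satisfies $|x_{k+1} - x_k| = \beta(f,x_k) \leq \beta(f,x)\alpha(f,x)^{2^k-1} \to 0$, so by completeness of $\Cp$ (and the ultrametric inequality, which makes $|x_k - x_m| \leq \max_{k\leq j < m}|x_{j+1}-x_j|$) the sequence is Cauchy and converges to some $\zeta \in \Cp$. Continuity of $f$ and $f'$, together with $|f(x_k)| \leq |f'(x)|\beta(f,x)\alpha(f,x)^{2^k} \to 0$, gives $f(\zeta) = 0$; and since $|f'(x_k)| = |f'(x)| \neq 0$ for all $k$ and $f'$ is continuous, $f'(\zeta) \neq 0$, so $\zeta$ is a non-singular root. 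The distance estimate $|x_k - \zeta| = \lim_{m\to\infty}|x_k - x_m| \leq \max_{j\geq k}|x_{j+1}-x_j| = |x_{k+1}-x_k| = \beta(f,x_k)$ (the max is at $j=k$ since the terms are strictly decreasing), combined with (b), yields (Q): $|x_k - \zeta| = \beta(f,x_k) \leq \alpha(f,x)^{2^k}\beta(f,x) < \alpha(f,x)^{2^k}/\gamma(f,x)$, the last step using $\beta(f,x) = \alpha(f,x)/\gamma(f,x) < 1/\gamma(f,x)$. Quadratic convergence is immediate from the exponent $2^k$. Finally, the equivalence ($\alpha$)$\Leftrightarrow$($\gamma$): if $\alpha(f,x) < 1$ then $\zeta$ as constructed satisfies $|x - \zeta| \leq \beta(f,x) = \alpha(f,x)/\gamma(f,x) < 1/\gamma(f,x)$, giving ($\gamma$); conversely if $\dist(x, f^{-1}(0)) < 1/\gamma(f,x)$, pick a root $\zeta$ with $|x-\zeta| < 1/\gamma(f,x)$, Taylor-expand $f$ and $f'$ around $\zeta$ to bound $\beta(f,x) = |f(x)/f'(x)|$: one gets $|f(x)| \leq \max_{k\geq 1}|\tfrac{f^{(k)}(\zeta)}{k!}||x-\zeta|^k$ and $|f'(x)| = |f'(\zeta)|$ (again by the ultrametric inequality since the correction terms are strictly smaller, using $\gamma(f,\zeta) \leq \gamma(f,x)$ — which itself needs the $\gamma$-at-a-root comparison, the one genuinely delicate point), leading to $\beta(f,x) \leq \gamma(f,x)|x-\zeta|^?$ and hence $\alpha(f,x) = \beta(f,x)\gamma(f,x) < 1$.

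\textbf{Main obstacle.} The routine parts are the ultrametric Taylor estimates, which are genuinely easy because the ultrametric inequality replaces the geometric-series bookkeeping of the classical Archimedean proof. The one place demanding care is the comparison of $\gamma$ (and of $f'$) between a point and a nearby root in \emph{both} directions — i.e. establishing $\gamma(f,x) \leq \gamma(f,\zeta)$ or $\gamma(f,\zeta)\leq\gamma(f,x)$ under the appropriate smallness hypothesis, since $\gamma$ enters the hypotheses of the $\gamma$-criterion at $x$ but the natural estimates want to expand around $\zeta$. I expect this to be handled by a symmetric argument analogous to the equality cases in Proposition~\ref{prop:Lipschitzpolynomial} and Theorem~\ref{theo:conditionproperties}(3): the ultrametric inequality forces equality of the relevant absolute values once one quantity strictly dominates, so $\gamma(f,x) = \gamma(f,\zeta)$ on the nose in the regime where the $\gamma$-criterion holds, which makes the equivalence clean rather than merely an inequality.
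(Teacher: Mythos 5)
Your forward direction (the one-step contraction estimates $|f(y)/f'(x)| \le \beta\alpha$, $|f'(y)| = |f'(x)|$ via the ultrametric equality case, the $\gamma$-comparison, the telescoping iteration, and the Cauchy argument) is essentially the paper's proof, though the paper factors it through three standalone statements---an analogue of your ``$|f'(y)|=|f'(x)|$'' step (stated multivariately as $\|\diff_y f^{-1}\diff_x f\|=1$), a proposition on how $\alpha,\beta,\gamma$ vary under $\gamma(f,x)|x-y|<1$ (with equalities in the regime you call ``on the nose''), and a proposition on the variation along a Newton step---before combining them by induction. Two remarks are worth making. First, your bookkeeping in (b) has a sign slip: from $\beta_{j+1}\le\alpha_j\beta_j$ you correctly get $\beta_k \le \beta_0\,\alpha_0^{2^k-1}$, but since $\alpha_0<1$ this is \emph{weaker}, not stronger, than $\beta_0\,\alpha_0^{2^k}$, so the inequality $\alpha_0^{2^k-1}\le\alpha_0^{2^k}$ you wrote runs the wrong way. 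You flagged your own uncertainty, and nothing downstream breaks: the bound actually obtained, $\beta_k\le\beta_0\alpha_0^{2^k-1}=\alpha_0^{2^k}/\gamma_0$, is exactly the final term in (Q), so quadratic convergence is unaffected. Second, for the reverse implication ($\gamma$)$\Rightarrow$($\alpha$) you Taylor-expand around the root $\zeta$, which forces you to transport $\gamma$ from $x$ to $\zeta$ and leaves the ``?''-exponent loose end you noted. The paper instead expands $0=f(\zeta)$ around $x$, writes $-f(x)/f'(x)=(\zeta-x)+\sum_{k\ge2}\frac{f^{(k)}(x)}{k!f'(x)}(\zeta-x)^k$, bounds every higher-order term strictly below $|\zeta-x|$ using only $\gamma(f,x)|\zeta-x|<1$, and concludes $\beta(f,x)=|\zeta-x|$ outright by the ultrametric equality case. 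That sidesteps the $\gamma$-at-$\zeta$ comparison entirely, which is precisely the ``genuinely delicate point'' you identified; I'd recommend adopting it to tighten your argument.
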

\begin{proof}
See the Appendix~\ref{sec:smalealphatheory} for the proof of the statement in full generality.
\end{proof}
\begin{remark}
Note that $\beta(f,x)=|\newton_f(x)-x|$. Thus $\beta(f,x)$ is nothing more than the length of a Newton step. In other words, Smale's $\alpha$-theorem tells us that if the Newton step is sufficiently small, then fast convergence is guaranteed. As we will see in the sequel (Proposition~\ref{prop:conditiontoalpha}), $\alpha(f,x)<1$ is implied by the condition
\[
\|f\||f(x)|<|f'(x)|^2,
\]
which, when $f\in\Zp[T]$, it gives the strong version of Hensel's lemma for lifting roots.
\end{remark}
\begin{remark}
We also note that we are stating the result for complex $p$-adics. However, if the considered polynomial and initial point lie in $\Qp$, then we can guarantee that 
\end{remark}

The following propositions relates $\alpha$ to the condition number and to Strassman's count.

\begin{prop}\label{prop:conditiontoalpha}
Let $f\in \Qp[T]$ and $x\in\Zp$. If $\kappa(f,x)|f(x)|/|f'(x)|<1$, then $\alpha(f,x)<1$.
\end{prop}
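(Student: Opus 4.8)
The plan is to unwind the definition $\alpha(f,x)=\beta(f,x)\gamma(f,x)$ and show that each of the quantities $\beta(f,x)^{k-1}\,\bigl|f^{(k)}(x)/(k!\,f'(x))\bigr|$, for $2\le k\le d$, is strictly less than $1$; taking $(k-1)$-th roots and then the maximum over the finitely many relevant indices $k$ then yields $\alpha(f,x)<1$.

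First I would rewrite the hypothesis. From $\kappa(f,x)\,|f(x)|/|f'(x)|<1$ and the bound $\kappa(f,x)\ge 1$ of Theorem~\ref{theo:conditionproperties}(0) we get $|f(x)|\le \kappa(f,x)|f(x)|<|f'(x)|$; in particular $f'(x)\neq 0$ and $\max\{|f(x)|,|f'(x)|\}=|f'(x)|$, so $\kappa(f,x)=\|f\|/|f'(x)|$. Substituting this back, the hypothesis becomes the clean inequality $\|f\|\,|f(x)|<|f'(x)|^2$.

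Next, using $|f^{(k)}(x)/k!|\le\|f\|$ from Proposition~\ref{prop:norminequalities}(e), for $k=2$ this is exactly what we need: $|f(x)|\,|f''(x)/2|\le|f(x)|\,\|f\|<|f'(x)|^2$. For $k\ge 3$ I would iterate: $|f(x)|^{k-1}\,|f^{(k)}(x)/k!|\le|f(x)|^{k-2}\bigl(|f(x)|\,\|f\|\bigr)<|f(x)|^{k-2}\,|f'(x)|^2\le|f'(x)|^{k-2}\,|f'(x)|^2=|f'(x)|^{k}$, where the last step uses $|f(x)|<|f'(x)|$. Dividing by $|f'(x)|^k$ gives $\beta(f,x)^{k-1}\,\bigl|f^{(k)}(x)/(k!\,f'(x))\bigr|<1$ for every $k$ with $2\le k\le d$ (for $k>d$ the term vanishes since $\deg f=d$), hence $\beta(f,x)\,\bigl|f^{(k)}(x)/(k!\,f'(x))\bigr|^{1/(k-1)}<1$, and taking the maximum over this finite range yields $\alpha(f,x)=\beta(f,x)\gamma(f,x)<1$.

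I do not anticipate a serious obstacle: everything is elementary once the hypothesis is rewritten as $\|f\|\,|f(x)|<|f'(x)|^2$. The two points requiring a little care are the reduction $\kappa(f,x)=\|f\|/|f'(x)|$, which relies on first deducing $|f(x)|<|f'(x)|$ from $\kappa\ge 1$ (and also secures $f'(x)\neq 0$ so that $\alpha,\beta,\gamma$ are finite), and the observation that the maximum defining $\gamma$ runs over the finite set $2\le k\le d$, so that a strict inequality term-by-term propagates to the maximum.
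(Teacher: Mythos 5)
Your proof is correct and follows essentially the same route as the paper's: the paper's one-line proof applies the regularity inequality and higher derivative estimate of Theorem~\ref{theo:conditionproperties}, which when unpacked amount exactly to your observations that the hypothesis forces $|f(x)|<|f'(x)|$ (hence $\kappa(f,x)=\|f\|/|f'(x)|$) and that $|f^{(k)}(x)/k!|\le\|f\|$ from Proposition~\ref{prop:norminequalities}(e) bounds each term of $\gamma$. Your remark about the maximum in $\gamma$ being over a finite index set is a harmless extra precaution; the paper sidesteps it by bounding $\gamma(f,x)\le\kappa(f,x)$ first and then multiplying by the strict hypothesis, but both routes close the gap.
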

\begin{proof}
We use Theorem~\ref{theo:conditionproperties}: the regularity inequality---since $\kappa(f,x)|f(x)| / |f'(x)|<1$ implies $\kappa(f,x)|f(x)|/\|f\|<1$ by Proposition~\ref{prop:norminequalities}---and the higher derivative estimate to bound $\gamma(f,x)$.
\end{proof}
\begin{prop}\label{prop:strassmantoalpha}
Let $f\in \Qp[T]$, $x\in\Zp$ and $s\in\bbN$. If $\St(f;x,p^{-s})=1$, then
\begin{enumerate}
    \item[(S1)] $\alpha(f,x)<1$, $\beta(f,x)\leq p^{-s}$, and $\gamma(f,x)< p^s$.
    \item[(S2)] The Newton sequence staring at $x$, $\{\newton_f^k(x)\}$, is well-defined and converges to the only root $\zeta$ of $f$ in $\overline{B}(x,p^{-s})$---and in the corresponding closed ball in $\Cp$.
    \item[(S3)] For all $k$, $|\newton_f^k(x)-\zeta|\leq p^{-s-1-2^{k-1}}$.
\end{enumerate}
\end{prop}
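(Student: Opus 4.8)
The plan is to pull everything back to the translated polynomial $g(T):=f(x+p^{s}T)$, whose coefficients are $g_k=\big(f^{(k)}(x)/k!\big)p^{ks}$, so that $|g_k|=|f^{(k)}(x)/k!|\,p^{-ks}$ and, by definition, $\St(g)=\St(f;x,p^{-s})=1$. The first move is to read off from $\St(g)=1$ the two numerical facts that drive the whole proof:
\[
|g_0|\le |g_1|\qquad\text{and}\qquad |g_k|<|g_1|\ \text{ for every }k\ge 2 .
\]
The first is exactly the assertion that $k=1$ meets the condition defining $\St$. For the second, if some $k_0\ge 2$ had $|g_{k_0}|\ge |g_1|$, take the least such $k_0$; then every $g_l$ with $l<k_0$ satisfies $|g_l|\le |g_1|\le |g_{k_0}|$ (using the first fact and minimality of $k_0$), so $k_0$ would also meet the defining condition and $\St(g)\ge k_0\ge 2$, a contradiction. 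In particular $g_1\neq 0$, i.e.\ $f'(x)\neq 0$.

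\textbf{Proof of (S1).} Next I convert these facts into (S1). Since $|f(x)|=|g_0|$ and $|f'(x)|=p^{s}|g_1|$, we get $\beta(f,x)=|g_0|/(p^{s}|g_1|)\le p^{-s}$. For $k\ge 2$ we have $|f^{(k)}(x)/(k!f'(x))|=(|g_k|/|g_1|)\,p^{(k-1)s}<p^{(k-1)s}$ (and, as these are integer powers of $p$, even $\le p^{(k-1)s-1}$), so taking $(k-1)$-th roots and the maximum gives $\gamma(f,x)<p^{s}$. Multiplying the two estimates, $\alpha(f,x)=\beta(f,x)\gamma(f,x)<p^{-s}\cdot p^{s}=1$.

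\textbf{Proof of (S2).} I feed $\alpha(f,x)<1$ into the $p$-adic Smale $\alpha/\gamma$-theorem (Theorem~\ref{theo:padicSmalealphatheory}): the Newton sequence $\{\newton_f^{k}(x)\}$ is well defined and converges quadratically to a non-singular root $\zeta$ of $f$ in $\Cp$, and item (Q) at $k=0$ gives $|x-\zeta|=\beta(f,x)\le p^{-s}$, so $\zeta$ lies in the closed ball of radius $p^{-s}$ about $x$. Because $f\in\Qp[T]$ and $x\in\Qp$, every iterate $\newton_f^{k}(x)$ lies in $\Qp$ (the Newton operator is rational and, by item (a), $f'$ does not vanish along the orbit); since $\Qp$ is complete, hence closed in $\Cp$, the limit satisfies $\zeta\in x+p^{s}\Zp=\overline{B}(x,p^{-s})$. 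For uniqueness, Theorem~\ref{theo:strassmannexactforcomplex} is the key input: $\St(f;x,p^{-s})=1$ says $f$ has exactly one root, with multiplicity, in $\{z\in\Cp\mid |z-x|\le p^{-s},\ |z|\le 1\}$, and for $x\in\Zp$, $s\in\bbN$ the constraint $|z|\le 1$ is automatic; hence $\zeta$ is that unique root, and in particular the unique root of $f$ in $\overline{B}(x,p^{-s})$, a fortiori in $\overline{B}(x,p^{-s})\cap\Zp$.

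\textbf{Proof of (S3) and main obstacle.} Finally, (S3) is squeezed out of quadratic convergence. Writing $e_k:=|\newton_f^{k}(x)-\zeta|=\beta(f,\newton_f^{k}(x))$, the one-step quadratic estimate behind Theorem~\ref{theo:padicSmalealphatheory}, together with the monotonicity $\gamma(f,\newton_f^{k}(x))\le\gamma(f,x)<p^{s}$ from item (c), gives $e_k\le p^{s}e_{k-1}^{2}$ with $e_0=\beta(f,x)\le p^{-s}$; since all iterates and $\zeta$ lie in $\Qp$, each $e_k$ is an integer power of $p$, and a short induction exploiting this integrality sharpens the recursion to the bound claimed in (S3). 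I expect this last bookkeeping to be the main obstacle: the naive recursion only yields $e_k\le p^{-s}$, and one genuinely needs the integrality of $p$-adic valuations along the orbit to extract the extra super-exponential decay (and to pin down the precise exponent). A secondary delicate point is the passage between the $\Cp$-valued $\alpha$-theory and the $\Zp$-valued conclusion, where Theorem~\ref{theo:strassmannexactforcomplex} is exactly what guarantees that Newton's method lands on the one and only root in the ball.
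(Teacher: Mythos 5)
Your handling of (S1) and (S2) is correct and closely mirrors the paper's approach, and the uniqueness argument via Theorem~\ref{theo:strassmannexactforcomplex} together with the observation that the Newton iterates stay in $\Qp$ is actually more careful than what the paper writes out. The genuine gap is (S3), which you explicitly leave unfinished. You identify the right tool---the integrality of $p$-adic valuations along the orbit---and the right recursion $e_k < p^s e_{k-1}^2$ (strict, since $\gamma(f,\newton_f^{k-1}(x))=\gamma(f,x)<p^s$), but you should actually carry out the induction rather than declaring it bookkeeping. Doing so gives $e_0\le p^{-s}$, then $e_1<p^s e_0^2\le p^{-s}$ forces $e_1\le p^{-s-1}$, then $e_2<p^s e_1^2\le p^{-s-2}$ forces $e_2\le p^{-s-3}$, and inductively $e_k\le p^{-s-(2^k-1)}$.

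Note, however, that $p^{-s-(2^k-1)}$ is \emph{not} the bound $p^{-s-1-2^{k-1}}$ stated in (S3): yours is stronger for $k\ge 3$ but weaker at $k=1$ ($p^{-s-1}$ vs.\ $p^{-s-2}$), and the stated $k=1$ bound is in fact not attainable in general. For $f(T)=1+T+pT^2$, $x=0$, $s=0$ (so $\St(f;0,1)=1$) one has $\newton_f(0)=-1$ while the unique root satisfies $\zeta\equiv -1-p \pmod{p^2}$, so $|\newton_f(0)-\zeta|=p^{-1}>p^{-2}$. The paper's own proof of (S3) takes a slightly different route: it first sharpens $\alpha(f,x)\le p^{-1/(d-1)}$ and deduces, by the same integrality observation, that $\beta(f,\newton_f(x))\le p^{-s-1}$ and $\alpha(f,\newton_f(x))\le 1/p$, and then applies the $\alpha$-theorem \emph{at} $\newton_f(x)$ rather than running a recursion on $e_k$; with the exponent $\alpha^{2^k}$ written in item (Q) this yields the exponent $-s-1-2^{k-1}$, but (Q) at $k=0$ would read $\beta\le\alpha\beta$, which is inconsistent with $\alpha<1$, so the theorem must actually give $\alpha^{2^k-1}$ and the stated exponent should be adjusted accordingly. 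So: complete your recursion, state the bound $e_k\le p^{-s-(2^k-1)}$ you actually obtain, and be explicit that it differs from the printed one.
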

\begin{proof}
If $\St(f;x,p^{-s})=1$, then $|f(x)|\leq |f'(x)|/p^{s}$ and for $k\geq 2$,
\[
|f^{(k)}(x)/k!|<|f'(x)|p^{(k-1)s}.
\]
Therefore $\beta(f,x)\leq p^{-s}$ and $\gamma(f,x)<p^s$. Thus $\alpha(f,x)<1$ and the rest follows from Theorem~\ref{theo:padicSmalealphatheory}. 

Note that in $\Qp$, we have that $|x|<1$ implies $|x|\leq 1/p$, so we have that $\gamma(f,x)\leq p^{s-1/(d-1)}$ and so $\alpha(f,x)\leq p^{-1/(d-1)}$. Then, a direct application of Theorem~\ref{theo:padicSmalealphatheory} gives $|\newton_f^k(x)-\zeta|\leq p^{-s-2^k/(d-1)}$. Now, by the same theorem,
\[\beta(f,\newton_f(x))\leq \beta(f,x)\alpha(f,x)=\beta(f,x)p^{-1/(d-1)}~\text{ and }~\gamma(f,\newton_f(x))=\gamma(f,x).\]
But $\beta(f,\newton_f(x))$ is the absolute value of an element of $\Qp$, so $\beta(f,\newton_f(x))\leq p^{-s-1}$ and so $\alpha(f,\newton_f(x))\leq 1/p$. Hence, applying Theorem~\ref{theo:padicSmalealphatheory} to $\newton_f(x)$, we have the desired conclusion.
\end{proof}
\begin{remark}
The fact that $\St(f;x,p^{-s})=1$ implies quadratic convergence of the Newton's method means that the approximations obtained by Strassman are better than those obtained by Descartes. In the latter, there are no guarantees that the Newton method starting at the extremes of the isolating intervals converges at all, while, at the roots isolated using Strassman, Newton's method does not only converge, but it does so quadratically.
\end{remark}

\subsection{Separation bounds}

How separated are the roots of a $p$-adic polynomial? Smale's $\alpha$-theory provides only a bound in the real case~\cite{dedieubook}, although it is not exact. In the $p$-adic case, we have a precise estimate of the separation. Notice that  the separation bounds measures the separation (i.e., the distance) between the complex $p$-adic roots. 
	
\begin{defi}
Let $f\in\Cp[T]$ and $\zeta\in\Cp$ a root of $f$. The \emph{local separation of $f$ at $\zeta$} is
\begin{equation}
    \Delta_{\zeta}(f):=\min\{|z-\zeta|\mid z\in\Cp,\,z\neq \zeta,\,f(z)=0\}=\frac{1}{\gamma(f,\zeta)}, 
\end{equation}
if $\zeta$ is non-singular, and $0$ otherwise. The \emph{separation of $f$} is
\begin{equation}
    \Delta(f):=\min\{\Delta_{\eta}(f)\mid \eta\in\Cp,\,f(\eta)=0\}.
\end{equation}
\end{defi}

\begin{theo}\label{theo:gammaseparationtheorem}
Let $f\in\Cp[T]$ and $\zeta\in\Cp$ a root of $f$. Then
\[
\Delta_{\zeta}(f)=\frac{1}{\gamma(f,\zeta)}.
\]
\end{theo}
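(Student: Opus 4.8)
The plan is to reduce the claim to a computation with Newton polygons over $\Cp$. Write $\zeta$ for the given root and assume first $\zeta=0$ (the general case follows by translating, which does not change $\gamma$ by Proposition~\ref{prop:norminequalities}, nor the set of roots or their distances). So I want to show that the distance from $0$ to the nearest other root of $f$ equals $1/\gamma(f,0)$, where $\gamma(f,0)=\max_{k\geq 2}|f^{(k)}(0)/(k!\,f'(0))|^{1/(k-1)}$.

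First I would invoke the remark already made in the excerpt: $-\log\gamma(f,\zeta)/\log p$ is the first (smallest) slope of the Newton polygon of $g(T):=(T-\zeta)f(T)$. Concretely, after the reduction to $\zeta=0$, $g(T)=T f(T)$ has coefficients $g_k=f_{k-1}$, and since $f(0)=0$ we have $g_0=0$, $g_1=f'(0)$, and $g_{k}=f^{(k-1)}(0)/(k-1)!$ for $k\geq 1$; the vertex of the Newton polygon at abscissa $1$ is $(1,v(f'(0)))$, and the first slope is
\[
\min_{k\geq 2}\frac{v\big(f^{(k-1)}(0)/(k-1)!\big)-v(f'(0))}{k-1}
=\min_{j\geq 1}\frac{v\big(f^{(j)}(0)/j!\big)-v(f'(0))}{j},
\]
which is exactly $-\log_p\gamma(f,0)$. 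By the standard Newton polygon theorem over $\Cp$ (the same statement used in Theorem~\ref{theo:strassmannexactforcomplex}, e.g.\ \cite[Theorem~6.4.7]{gouveabook}), the roots of $g$ with valuation equal to this first slope are precisely the roots of $f$ lying on the smallest circle around $0$; and $0$ itself is the unique root of $g$ of infinite valuation (the simple root $T=0$ coming from the factor $T$). Hence the nonzero roots of $f$ of minimal absolute value have absolute value $p^{-(\text{first slope})}=\gamma(f,0)^{-1}$, and no nonzero root of $f$ is closer to $0$. This gives $\Delta_0(f)=1/\gamma(f,0)$.

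Two points need a little care. One is the degenerate case $\deg f=1$ (or more generally when $f$ has only one root), where $\gamma(f,\zeta)=\max$ over an empty set; by convention this should be $0$, matching $\Delta_\zeta(f)=+\infty$, and I would simply note this case separately. The other, which I expect to be the main obstacle to state cleanly rather than to prove, is making sure the ``first slope equals $-\log_p\gamma$'' bookkeeping is exactly right: one must track that the vertex of the Newton polygon of $Tf(T)$ at abscissa $1$ really is $(1,v(f'(0)))$ — i.e.\ that no earlier coefficient forces a smaller slope — which holds because $g_0=0$ contributes the point at $+\infty$ and the only finite point to its right with abscissa $1$ has ordinate $v(g_1)=v(f'(0))$, so the first genuine segment starts at $(1,v(f'(0)))$ and its slope is the minimum of the $j\mapsto (v(f^{(j)}(0)/j!)-v(f'(0)))/j$. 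Once this identification is pinned down, the theorem is immediate from the Newton polygon root-counting theorem, and in fact this is the geometric fact underlying Proposition~\ref{prop:strassmantoalpha} as well.
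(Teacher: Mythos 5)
Your overall strategy is exactly the one the paper uses: reduce to $\zeta=0$ by translation, and read off the absolute value of the nearest other root from the first slope of a Newton polygon, identifying that slope with $-\log_p\gamma(f,0)$. The paper factors $f=T\cdot h$ with $h=\sum_{k=1}^{d}f_kT^{k-1}$ and applies the Newton polygon theorem to $h$, whose constant coefficient is $f_1=f'(0)$; the first slope of $h$ is $\min_{k\geq2}\frac{\nu(f_k)-\nu(f_1)}{k-1}$, giving $\Delta_0(f)=\min_{k\geq2}|f_1/f_k|^{1/(k-1)}=1/\gamma(f,0)$.

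Where you go astray is in the bookkeeping you yourself flagged as delicate. You work with $g(T)=Tf(T)$, for which $g_k=f_{k-1}$, so $g_1=f_0=f(0)=0$, \emph{not} $f'(0)$. Consequently the vertex at abscissa $1$ is at $+\infty$, not at $(1,\nu(f'(0)))$; the first finite vertex of the Newton polygon of $Tf$ sits at $(2,\nu(f'(0)))$, and the first finite slope is $\min_{j\geq 3}\frac{\nu(g_j)-\nu(g_2)}{j-2}=\min_{k\geq2}\frac{\nu(f_k)-\nu(f_1)}{k-1}$. Your reindexed expression $\min_{j\geq1}\frac{\nu(f^{(j)}(0)/j!)-\nu(f'(0))}{j}$ spuriously includes the $j=1$ term, which is identically $0$, so that formula is always $\leq 0$ and does not equal $-\log_p\gamma(f,0)$ when $\gamma(f,0)<1$. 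This is a genuine (if local) error, not merely a cosmetic one. The simplest fix is to do what the paper does and work with $f(T)/T$ rather than $Tf(T)$: then the constant term is $f'(0)$, the polygon genuinely starts at abscissa $0$, and the off-by-one disappears. Note also that as stated the remark in the paper about $(T-\zeta)f(T)$ is a heuristic; the actual proof divides by $T$ rather than multiplying.

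One small structural point: the paper proves the inequality $\Delta_\zeta(f)\geq 1/\gamma(f,\zeta)$ first via Theorem~\ref{theo:padicSmalealphatheory} and only uses the Newton polygon for the reverse inequality, whereas you extract equality directly from the Newton polygon; both are fine, and your treatment of the degenerate case $\deg f=1$ (empty max, $\gamma=0$, $\Delta=\infty$) is a welcome addition that the paper elides.
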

\begin{cor}\label{cor:kappaseparation}
Let $f\in\Qp[T]$ and $\zeta\in\Zp$ a root of $f$. Then
\[
\Delta_{\zeta}(f)\geq \frac{1}{\kappa(f,\zeta)}\geq \frac{1}{\kappa(f)}.
\]
\end{cor}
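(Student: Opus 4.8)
The plan is to reduce the corollary to the two ingredients already available: the exact separation formula $\Delta_\zeta(f)=1/\gamma(f,\zeta)$ from Theorem~\ref{theo:gammaseparationtheorem}, and the higher derivative estimate (part~(5) of Theorem~\ref{theo:conditionproperties}). So everything comes down to showing $\gamma(f,\zeta)\leq\kappa(f,\zeta)$ at a root $\zeta\in\Zp$ of $f$.

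First I would dispose of the singular case: if $\zeta$ is a singular root, then $f(\zeta)=f'(\zeta)=0$, so $\kappa(f,\zeta)=\infty$ and $\Delta_\zeta(f)=0$, making both inequalities $\Delta_\zeta(f)\geq 1/\kappa(f,\zeta)=0$ and $1/\kappa(f,\zeta)\geq 1/\kappa(f)$ trivially true (the latter since $\kappa(f)\geq\kappa(f,\zeta)=\infty$). So assume from now on that $\zeta$ is non-singular, i.e.\ $f'(\zeta)\neq 0$.

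Now the key observation is that at a root the local condition number collapses to $\kappa(f,\zeta)=\|f\|/\max\{|f(\zeta)|,|f'(\zeta)|\}=\|f\|/|f'(\zeta)|$, since $f(\zeta)=0$ and $f'(\zeta)\neq 0$. Consequently $\kappa(f,\zeta)\,|f'(\zeta)|/\|f\|=1\geq 1$, so the hypothesis of the higher derivative estimate (Theorem~\ref{theo:conditionproperties}(5)) is automatically satisfied at $x=\zeta$. Applying that estimate gives $\gamma(f,\zeta)\leq\kappa(f,\zeta)$, and hence, by Theorem~\ref{theo:gammaseparationtheorem},
\[
\Delta_\zeta(f)=\frac{1}{\gamma(f,\zeta)}\geq\frac{1}{\kappa(f,\zeta)}.
\]
Finally, $\kappa(f,\zeta)\leq\sup_{z\in\Zp}\kappa(f,z)=\kappa(f)$ by definition of the global condition number (cf.\ Theorem~\ref{theo:conditionproperties}(0)), which yields $1/\kappa(f,\zeta)\geq 1/\kappa(f)$ and completes the proof.

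I do not anticipate a genuine obstacle here; the only subtlety worth flagging is making sure the hypothesis $\kappa(f,x)|f'(x)|/\|f\|\geq 1$ in part~(5) is checked rather than assumed — but as noted it holds with equality precisely because $\zeta$ is a root, so no regularity inequality or case analysis on which of $|f(x)|,|f'(x)|$ dominates is needed.
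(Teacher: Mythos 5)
Your proof is correct and follows exactly the route the paper takes: combine the exact separation formula $\Delta_\zeta(f)=1/\gamma(f,\zeta)$ of Theorem~\ref{theo:gammaseparationtheorem} with the higher derivative estimate of Theorem~\ref{theo:conditionproperties}(5). Your write-up is just more explicit than the paper's one-line proof, usefully spelling out why the hypothesis of part~(5) holds automatically at a root and dispatching the singular case up front.
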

\begin{remark}
Unlike the real case~\cite{TCTcubeI-journal}, note that the bounds above do not depend on the degree of the polynomial considered.
\end{remark}
\begin{proof}[Proof of Theorem~\ref{theo:gammaseparationtheorem}]
By Theorem~\ref{theo:padicSmalealphatheory}, we have that $\Delta_{\zeta}(f)\geq\frac{1}{\gamma(f,\zeta)}$. Without loss of generality, assume that $\zeta=0$ and that it is non-singular. Then, we can write $f$ as
\[
T\sum_{k=1}^{d}f_kT^{k-1},
\]
where $f_1\neq 0$. By definition, $\Delta_0(f)$ is the absolute value of the smallest root of $\sum_{k=1}^{d}f_kT^{k-1}$. Now, by~\cite[6.4.7]{gouveabook}, this can be computed by finding the smallest slope of the Newton polygon of $\sum_{k=1}^{d}f_kT^{k-1}$, which is the first possible slope. Now, the possible first slope are
\[
\frac{\nu(f_k)-\nu(f_1)}{k-1}~~(k\geq 2) ,
\]
where $\nu:\Cp\rightarrow \bbR$ is the valuation of $\Cp$. Now, taking the minimim of these, we obtain the first slope, and so the smallest root of $\sum_{k=1}^{d}f_kT^{k-1}$ has norm
\[
p^{\min_{k\geq 2}\frac{\nu(f_k)-\nu(f_1)}{k-1}}=\min_{k\geq 2}\left|\frac{f_1}{f_k}\right|^{\frac{1}{k-1}}=\frac{1}{\gamma(f,x)},
\]
as we wanted to show.
\end{proof}
\begin{proof}[Proof of Corollary~\ref{cor:kappaseparation}]
This is Theorem~\ref{theo:gammaseparationtheorem} combined with the higher derivative estimate (Theorem~\ref{theo:conditionproperties}).
\end{proof}

We can now provide an alternative proof of Proposition~\ref{prop:conditionminimum} using the separation of the roots.

\begin{proof}[Alternative proof of Proposition~\ref{prop:conditionminimum}]
If $\St(f;x,p^{-s})=0$, we are done. If $\St(f;x,p^{-s})\geq 1$, take a root $\zeta\in\Cp$ of $f$ such that $|\zeta-x|\leq p^{-s}$---and so $|\zeta|\leq 1$. By the choice of $s$, this means that
\[
|\zeta-x|\leq \frac{1}{p\kappa(f,x)}<\frac{1}{\kappa(f,x)},
\]
and so, by the 2nd Lipschitz property~\ref{theo:conditionproperties}---we only need $|x|,|y|\leq 1$, not $x,y\in\Zp$---,
\[
\kappa(f,x)=\kappa(f,\zeta).
\]
But this means, by Corollary~\ref{cor:kappaseparation}, that for any other root $\eta\in\Cp$ of $f$, we have that
\[
|\eta-\zeta|\geq\frac{1}{\kappa(f,x)}>p^{-s}.
\]
Hence, by Theorem~\ref{theo:strassmannexactforcomplex}, $\St(f;x,p^{-s})=1$.
\end{proof}

\subsection{Precision}

The following theorem shows how condition numbers allow us to truncate the coefficients of a polynomial so that the roots of the approximation are roots of the original polynomial \`{a} la Smale---meaning that the Newton method starting at these roots converge quadratically to the roots of the original polynomial.

\begin{theo}\label{theo:preccondnumber}
Let $f,\tilde{f}\in\Qp[T]$ be a $p$-adic polynomials of degree $d$. If
\[\kappa(f)^2\frac{\left\|\tilde{f}-f\right\|}{\|f\|}<1,\]
then:
\begin{enumerate}[(i)]
    \item $f$ and $\tilde{f}$ have the same number of roots in $\Zp$.
    \item For every root $\tilde{\zeta}\in\Zp$ of $\tilde{f}$, $\alpha(f,x)\leq 1/p$. In particular, there is a unique root $\zeta\in\Zp$ of $f$ such that for $k\geq 0$,
    \[\left|\newton_f^k\left(\tilde{\zeta}\right)-\zeta\right|\leq p^{-2^k}.\]
    \item For every root $\zeta\in\Zp$ of $f$, $\alpha(\tilde{f},x)\leq 1/p$. In particular, there is a unique root $\tilde{\zeta}\in\Zp$ of $\tilde{f}$ such that for $k\geq 0$,
    \[\left|\newton_{\tilde{f}}^k\left(\zeta\right)-\tilde{\zeta}\right|\leq p^{-2^k}.\]
\end{enumerate}
\end{theo}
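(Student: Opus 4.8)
The plan is to derive everything from the two Lipschitz properties in Theorem~\ref{theo:conditionproperties} together with $p$-adic Smale's $\alpha$-theory (Theorem~\ref{theo:padicSmalealphatheory}) and Proposition~\ref{prop:conditiontoalpha}. Write $\varepsilon:=\|\tilde f-f\|/\|f\|$, so the hypothesis is $\kappa(f)^2\varepsilon<1$; in particular $\varepsilon<1/\kappa(f)\le 1/\kappa(f,x)$ for every $x\in\Zp$. First I would record the consequence of the 1st Lipschitz property: since $\varepsilon<1/\kappa(f)$ we get the equality case, $\|\tilde f\|/\kappa(\tilde f)=\|f\|/\kappa(f)$, hence $\kappa(\tilde f)=\kappa(f)$ (the norms agree because $\|\tilde f-f\|<\|f\|$, so $\|\tilde f\|=\|f\|$ by the ultrametric inequality), and likewise $\kappa(\tilde f,x)=\kappa(f,x)$ for each $x$. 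So $f$ and $\tilde f$ are interchangeable as far as condition numbers go; this symmetry is what makes (ii) and (iii) mirror statements.

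For part (ii): let $\tilde\zeta\in\Zp$ be a root of $\tilde f$, so $\tilde f(\tilde\zeta)=0$ and hence $|f(\tilde\zeta)|=|f(\tilde\zeta)-\tilde f(\tilde\zeta)|\le\|f-\tilde f\|=\varepsilon\|f\|$. I want to feed this into Proposition~\ref{prop:conditiontoalpha}, which needs $\kappa(f,\tilde\zeta)|f(\tilde\zeta)|/|f'(\tilde\zeta)|<1$. The denominator is controlled by the regularity inequality at $\tilde\zeta$: since $|f(\tilde\zeta)|\le\varepsilon\|f\|<\|f\|/\kappa(f,\tilde\zeta)$, regularity forces $|f'(\tilde\zeta)|\ge\|f\|/\kappa(f,\tilde\zeta)$. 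Combining,
\[
\kappa(f,\tilde\zeta)\frac{|f(\tilde\zeta)|}{|f'(\tilde\zeta)|}\le \kappa(f,\tilde\zeta)\cdot\varepsilon\|f\|\cdot\frac{\kappa(f,\tilde\zeta)}{\|f\|}=\kappa(f,\tilde\zeta)^2\varepsilon\le\kappa(f)^2\varepsilon<1,
\]
so $\alpha(f,\tilde\zeta)<1$; and in fact one gets the sharper $\beta(f,\tilde\zeta)=|f(\tilde\zeta)/f'(\tilde\zeta)|\le\varepsilon\kappa(f,\tilde\zeta)^2\cdot\|f\|/\|f\|$... more simply, $\beta(f,\tilde\zeta)\le\varepsilon\kappa(f)^2<1$, and since this is the absolute value of an element of $\Qp$ it is $\le 1/p$; combined with $\gamma(f,\tilde\zeta)\le\kappa(f,\tilde\zeta)$ (higher derivative estimate) one can push $\alpha(f,\tilde\zeta)\le 1/p$ exactly as in the proof of Proposition~\ref{prop:strassmantoalpha} (the $\beta\le 1/p$, $\gamma\le\kappa$ with $\alpha=\beta\gamma\le 1/p$ when $\kappa<p$... care is needed here, see below). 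Then Theorem~\ref{theo:padicSmalealphatheory}(Q) gives a non-singular root $\zeta$ of $f$ with $|\newton_f^k(\tilde\zeta)-\zeta|=\beta(f,\newton_f^k(\tilde\zeta))\le\alpha(f,\tilde\zeta)^{2^k}\beta(f,\tilde\zeta)\le p^{-2^k}$, which is the claimed bound; uniqueness of $\zeta$ follows because two distinct roots of $f$ are at distance $\ge 1/\kappa(f)>\varepsilon^{1/2}\ge p^{-2^0}$ apart by Corollary~\ref{cor:kappaseparation} while $\tilde\zeta$ is within $\beta\le p^{-1}$ of $\zeta$. Part (iii) is identical after swapping $f\leftrightarrow\tilde f$, legitimate because $\|f-\tilde f\|$ is symmetric and $\kappa(\tilde f)=\kappa(f)$.

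For part (i): this should follow by a deformation/invariance argument. The clean way is to note that every root $\tilde\zeta\in\Zp$ of $\tilde f$ produces, via (ii), a distinct root $\zeta\in\Zp$ of $f$ (distinctness: the map $\tilde\zeta\mapsto\zeta$ is injective since $|\tilde\zeta-\zeta|\le p^{-1}$ and distinct roots of $f$ are $\ge 1/\kappa(f)$-separated, and $1/\kappa(f)>\sqrt\varepsilon\cdot\kappa(f)\ge p^{-1}$... one must check $1/\kappa(f)>p^{-1}$, i.e. $\kappa(f)<p$, which need not hold — so instead argue: if $\tilde\zeta_1\ne\tilde\zeta_2$ mapped to the same $\zeta$, both Newton sequences converge to $\zeta$, but also $|\tilde\zeta_i-\zeta|\le\beta(f,\tilde\zeta_i)$ and separately $\tilde\zeta_1,\tilde\zeta_2$ are $\ge 1/\kappa(\tilde f)=1/\kappa(f)$ apart as roots of $\tilde f$, contradiction once $\beta<1/(2\kappa(f))$, which holds since $\beta\le\varepsilon\kappa(f)^2<1/\kappa(f)$). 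Symmetrically by (iii) every root of $f$ in $\Zp$ comes from a root of $\tilde f$, and these two injections are mutually inverse (Newton from $\tilde\zeta$ lands in the ball around $\zeta$ and Newton from that $\zeta$ lands back in the ball around $\tilde\zeta$, and each ball contains a unique root of the respective polynomial), giving a bijection between the root sets, hence equal cardinalities.

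\textbf{Main obstacle.} The delicate point is upgrading $\alpha(f,\tilde\zeta)<1$ to the quantitatively strong $\alpha(f,\tilde\zeta)\le 1/p$ needed for the clean bound $p^{-2^k}$, and correspondingly ensuring the injectivity arguments in (i) go through even when $\kappa(f)$ is large (so $1/\kappa(f)$ is small). The resolution is the standard $p$-adic discreteness trick already used in Proposition~\ref{prop:strassmantoalpha}: $\beta(f,\tilde\zeta)$ and each Newton iterate's $\beta$ are norms of elements of $\Qp$, hence lie in $p^{-\bbN}\cup\{0\}$, so any strict inequality ``$<1$'' self-improves to ``$\le 1/p$''. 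Threading this discreteness through the $\alpha=\beta\gamma$ product — where $\gamma$ may exceed $p$ — is exactly the subtlety handled in the second paragraph of the proof of Proposition~\ref{prop:strassmantoalpha}, and I would import that argument here: after one Newton step $\beta$ drops below $p^{-1}$ while $\gamma$ is non-increasing, forcing $\alpha\le 1/p$ from the second iterate on, which suffices after re-indexing.
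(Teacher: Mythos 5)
Your overall structure matches the paper's: use the 1st Lipschitz property to identify $\kappa(f,x)=\kappa(\tilde f,x)$ for all $x$ (so that (ii) and (iii) are symmetric), apply Smale's $\alpha$-theory to a root $\tilde\zeta$ of $\tilde f$, and deduce (i) from the two injections. But there is a genuine gap in the step you flag as the ``main obstacle,'' and the workaround you propose does not repair it.

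The issue is how to upgrade $\alpha(f,\tilde\zeta)<1$ to $\alpha(f,\tilde\zeta)\le 1/p$. Your attempt --- note $\beta(f,\tilde\zeta)<1$ hence $\beta\le 1/p$ by discreteness, then multiply by $\gamma\le\kappa(f,\tilde\zeta)$ --- stalls precisely because $\gamma(f,\tilde\zeta)$ can be larger than $p$, so $\alpha=\beta\gamma$ need not drop below $1$ this way even after one Newton step; and $\alpha$ itself need not lie in $p^{\bbZ}$ (since $\gamma$ involves $(k-1)$-th roots), so one cannot apply the discreteness trick to $\alpha$ directly. The re-indexing you propose (``$\alpha\le 1/p$ from the second iterate on'') would only prove a weaker bound $|\newton_f^{k}(\tilde\zeta)-\zeta|\le p^{-2^{k-1}}$ rather than the stated $p^{-2^{k}}$, so it changes the conclusion.

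The paper's resolution is different and simpler: it bounds $\alpha(f,\tilde\zeta)$ by the single quantity
\[
\kappa(f,\tilde\zeta)^2\,\frac{\|\tilde f-f\|}{\|f\|},
\]
using $\beta(f,\tilde\zeta)\le\kappa(f,\tilde\zeta)\|\tilde f-f\|/\|f\|$ (which comes from the exact identity $|f'(\tilde\zeta)|=\|f\|/\kappa(f,\tilde\zeta)$, via the equality case of the ultrametric inequality --- you only used the one-sided regularity inequality) together with the higher-derivative estimate $\gamma(f,\tilde\zeta)\le\kappa(f,\tilde\zeta)$. The crucial observation is then that this \emph{upper bound} for $\alpha$ --- not $\alpha$ itself, and not $\beta$ alone --- lies in $p^{\bbZ}\cup\{0\}$: $\kappa(f,\tilde\zeta)$ is the reciprocal of $\max\{|f(\tilde\zeta)|,|f'(\tilde\zeta)|\}$, a $\Qp$-absolute value, and $\|\tilde f-f\|/\|f\|$ is a ratio of $\Qp$-ultranorms. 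Being a $p$-power strictly less than $1$, the bound is $\le 1/p$, hence $\alpha(f,\tilde\zeta)\le 1/p$, and Theorem~\ref{theo:padicSmalealphatheory}(Q) then gives the stated decay directly. This is the missing idea: apply discreteness to the full $\alpha$-bound, not to $\beta$ and $\gamma$ separately. Finally, for (i) you do not need the ``mutually inverse'' claim; two injections between finite sets already force equal cardinality, which is how the paper concludes.
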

\begin{exam}
Consider the $2$-adic polynomial $f=2x+x^2$. For this polynomial, we have
\[
\kappa(f,0)=2,\,\kappa(f,1)=1,\,\kappa(f,2)=2
\]
and so $\kappa(f)=2$. Moreover, note that $\|f\|=1$ and that $f$ has $2$ roots in $\bbZ_2$: $0$ and $-2$.

Now consider, $\tilde{f}=4+2x+x^2$. Even though
\[\kappa(f)\left\|\tilde{f}-f\right\|/\|f\|<1,\]
and so $\kappa(\tilde{f})=2$ and $\left\|\tilde{f}\right\|=1$, we have that $f$ does not have any root in $\bbQ_2$, because its discriminant, $-12$, is not an square in $\bbQ_2$, because $-3$ is not one modulo $8$. Hence the square in the condition number of the condition of Theorem~\ref{theo:preccondnumber} cannot be removed in general.
\end{exam}

\begin{proof}[Proof of Theorem~\ref{theo:preccondnumber}]
If $\kappa(f)\frac{\left\|\right\|}{\|f\|}<1$, then $\left\|\tilde{f}-f\right\|<\|f\|$ and so $\|f\|=\left\|\tilde{f}\right\|$. Now, we can assume, without loss of generality, after scaling by a power of $p$, that $\|f\|=\left\|\tilde{f}\right\|=1$.

By the 1st Lipschitz property (Theorem~\ref{theo:conditionproperties}), we have that $\kappa(f)=\kappa(\tilde{f})$ and that for all $x\in\Zp$, $\kappa(f,x)=\kappa(\tilde{f},x)$. Thus, once we show (ii), we are done, since, on the one hand, we can interchange the roles of $f$ and $\tilde{f}$, so (ii) gives (iii); and, on the other hand, once we have (ii) and (iii), we have injective maps from the roots of $f$ in $\Zp$ to the roots of $\tilde{f}$ in $\Zp$ and in the other direction.

Let $\tilde{\zeta}$ be a root of $\tilde{f}$. Then, on the one hand,
\[
\left|f(\tilde{\zeta})\right|\leq \left\|\tilde{f}-f\right\|,
\]
and, on the other hand,
\[
\left|f'(\tilde{\zeta})\right|=\left|\tilde{f}'(\tilde{\zeta})\right|=1/\kappa(f,\tilde{\zeta})=1/\kappa(\tilde{f},\tilde{\zeta}),
\]
because
\begin{multline*}
 \left|f'(\tilde{\zeta})\right|=\max\left\{\left|\tilde{f}'(\tilde{\zeta})\right|,\left|\left(\tilde{f}-f\right)(x)\right|\right\}\\=\max\left\{1/\kappa\left(\tilde{f},\tilde{\zeta}\right),\left|\left(\tilde{f}-f\right)(x)\right|\right\}=1/\kappa\left(\tilde{f},\tilde{\zeta}\right)= \left|\tilde{f}'(\tilde{\zeta})\right|,  
\end{multline*}
where equailities follows from the equality case of the ultrametric inequality, $\left|\left(\tilde{f}-f\right)(x)\right|\leq\left\|\tilde{f}-f\right\|$, by Proposition~\ref{prop:norminequalities}; $\left|\tilde{f}'(\tilde{\zeta})\right|=1/\kappa\left(\tilde{f},\tilde{\zeta}\right)$, due to $\tilde{f}\left(\tilde{\zeta}\right)=0$; and our assumption. Therefore
\[
\beta\left(f,\tilde{\zeta}\right)\leq \kappa\left(f,\tilde{\zeta}\right)\left\|\tilde{f}-f\right\|,
\]
and, by the the higher derivative estimate (Theorem~\ref{theo:conditionproperties}), 
\[
\gamma\left(f,\tilde{\zeta}\right)\leq \kappa\left(f,\tilde{\zeta}\right).
\]
Thus
\[
\alpha(f,\tilde{\zeta})\leq \kappa(f,\tilde{\zeta})^2\left\|\tilde{f}-f\right\|<1,
\]
and Theorem~\ref{theo:padicSmalealphatheory} finishes the proof---note that $\kappa(f,\tilde{\zeta})^2\left\|\tilde{f}-f\right\|<1$ implies that it is at most $1/p$ since it is the product of norms of vectors with entries in $\Qp$.
\end{proof}

We note that the above bound might be problematic to use in practice due to the issue that to compute it we need to have already compute the condition number $\kappa(f)$, which is not necessarily an easy task.

\section{Probabilistic complexity analysis}\label{sec:random}

In this, a \emph{random $p$-adic polynomial $\fkf\in\Zp[T]$ of degree $d$} is a random $p$-adic polynomial
\[
\fkf=\sum_{k=0}^d \fkf_kT^K
\]
where the $\fkf_k$ are independent random $p$-adic variables uniformly distributed in $\Zp$ (with respect the Haar probability measure). In other words, we are taking the Haar measure the $\Zp$-module of polinomials of degree $d$ in $\Zp[T]$. 

We aim to prove probabilistic results for this class of random polynomials. First, we recall some basic facts on random $p$-adic vectors; second, we analyze probabilistically the condition number; third, we analyze probabilistically Strassman count; and fourth and last, we apply these results to the results in previous section to obtain the probabilistic analysis of \nameref{alg:strassman}.

\subsection[Random p-adic vectors and some basic results]{Random $p$-adic vectors and some basic results}

Since we will not be considering more than a class of random $p$-adic vectors. We can give the following definition for a random $p$-adic vector.

\begin{defi}
A \emph{random $p$-adic vector} $\fkx\in\Zp^N$ is a random element of $\Zp$ taken with respect the unique Haar measure of $\Zp^N$.
\end{defi}

The following proposition list the elementary facts that we will be using regarding a random $p$-adic vector. 

\begin{prop}\label{prop:randompadicvectors}
Let $\fkx\in\Zp^N$ be a random $p$-adic vector. Then:
\begin{enumerate}[(a)]
    \item For every $A\in \mathrm{GL}_N(\Zp)$ and $x\in \Zp$, $x+A\fkx$ is a random $p$-adic vector.
    \item For every $s\in\bbN$,
    \[\bbP(\|x\|\leq p^{-s})=p^{-Ns}~\text{ and }~\bbP(\|x\|=p^{-s})=\left(1-p^{-s}\right)p^{-Ns}.\]
    \item For every $i$, $(\fkx_1,\ldots,\fkx_i)$ and $(\fkx_{i+1},\ldots,\fkx_N)$ are independent random $p$-adic vectors.
    \item If $\fky\in\Zp^{M}$ is a random $p$-adic vector, then so it is $(\fkx,\fky)\in\Zp^{N+M}$.
\end{enumerate}
\end{prop}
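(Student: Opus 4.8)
I would reduce all four statements to two standard facts about the compact abelian group $\Zp^N$: (1) the Haar probability measure $\mu_N$ is the \emph{unique} translation-invariant Borel probability measure on $\Zp^N$, and (2) it factors as the $N$-fold product $\mu_N=\mu_1^{\otimes N}$ of the Haar measure $\mu_1$ on $\Zp$ (any product of translation-invariant probability measures is translation-invariant, hence equals the Haar measure by (1)). More generally, writing $\mu_k$ for the Haar measure on $\Zp^k$, the same uniqueness argument gives $\mu_{k+l}=\mu_k\otimes\mu_l$ for all $k,l$.

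For (a), note that $\Phi\colon y\mapsto x+Ay$ is a bijection of $\Zp^N$: since $A\in\GL_N(\Zp)$ its determinant is a unit of $\Zp$, so $A^{-1}$ has entries in $\Zp$ and $A$ maps $\Zp^N$ bijectively onto itself, while translation by the fixed vector $x$ is a bijection. Then I would check that the pushforward $\Phi_*\mu_N$ is translation-invariant: for a Borel set $E$ and $z\in\Zp^N$, $\Phi_*\mu_N(E+z)=\mu_N(A^{-1}(E-x)+A^{-1}z)=\mu_N(A^{-1}(E-x))=\Phi_*\mu_N(E)$, using translation-invariance of $\mu_N$. Since $\Phi_*\mu_N$ is a probability measure, uniqueness forces $\Phi_*\mu_N=\mu_N$; that is, $x+A\fkx$ is again Haar-distributed.

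For (b), since $\|\cdot\|$ is the maximum of the coordinatewise absolute values, the event $\{\|\fkx\|\le p^{-s}\}$ is exactly $\{\fkx\in(p^s\Zp)^N\}$; by the product factorization and $\mu_1(p^s\Zp)=p^{-s}$ its probability is $(p^{-s})^N=p^{-Ns}$. The equality case follows by subtracting the nested events, $\bbP(\|\fkx\|=p^{-s})=\bbP(\|\fkx\|\le p^{-s})-\bbP(\|\fkx\|\le p^{-(s+1)})=p^{-Ns}-p^{-N(s+1)}$. For (c), the factorization $\mu_N=\mu_i\otimes\mu_{N-i}$ says precisely that $(\fkx_1,\dots,\fkx_i)$ and $(\fkx_{i+1},\dots,\fkx_N)$ are independent, each distributed according to the corresponding Haar measure, which is the assertion. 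For (d), if $\fkx\sim\mu_N$ and $\fky\sim\mu_M$ are independent, then $(\fkx,\fky)$ has law $\mu_N\otimes\mu_M=\mu_{N+M}$ by the same uniqueness argument, so it is a random $p$-adic vector.

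\textbf{Main obstacle.} There is no genuine difficulty here — the proposition is a bookkeeping exercise built on uniqueness of Haar measure and on product measures. The only points that deserve a moment's care are part (a), where one pushes forward by an element of $\GL_N(\Zp)$ rather than by a mere translation and must recheck that translation-invariance is preserved, and the second formula in (b), whose equality case evaluates to $p^{-Ns}-p^{-N(s+1)}$.
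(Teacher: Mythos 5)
Your proof is correct and uses essentially the same ideas as the paper — uniqueness of the Haar probability measure on $\Zp^N$ plus the product factorization $\mu_{k+l}=\mu_k\otimes\mu_l$. The only cosmetic difference is in part (b): you derive the first formula from the product factorization $\mu_1(p^s\Zp)=p^{-s}$, while the paper argues by counting the $p^{Ns}$ closed balls of radius $p^{-s}$ in $\Zp^N$ and using that Haar measure assigns them all the same mass; both are one-line Haar arguments.

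One thing you should have stated explicitly: your computation in (b) gives $\bbP(\|\fkx\|=p^{-s})=p^{-Ns}-p^{-N(s+1)}=\bigl(1-p^{-N}\bigr)p^{-Ns}$, whereas the statement as printed reads $\bigl(1-p^{-s}\bigr)p^{-Ns}$. These disagree unless $s=N$ (e.g.\ take $N=1$, $s=0$: the stated formula gives $0$, but the probability that a uniformly random $p$-adic integer is a unit is $1-p^{-1}$). Your value is the correct one; the exponent $-s$ in the statement should be $-N$. When a computation lands on something different from the stated target, it is worth flagging the discrepancy rather than leaving it implicit in the ``obstacle'' paragraph.
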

\begin{proof}
(a) This follows from the fact that $B\mapsto \bbP(x+A\fkx\in B)$ defines a Haar measure on $\Zp^N$. So it has to agree with the Haar measure of $\Zp^N$.

(b) This follows from the fact that for a Haar measure all the closed balls of the same radious have the same measure and that there are $p^{Ns}$ closed balls of radious $p^{-s}$ in $\Zp^N$.

(c) and (d). This follows from the fact that the product of the Haar probability measures is the Haar probability measure of the product.
\end{proof}

When we apply this proposition to our random $p$-adic polynomial, we get the following proposition:

\begin{prop}\label{prop:randompadicpolynomial}
Let $\fkf\in\Zp[T]$ be a random $p$-adic polynomial of degree $d$. Then:
\begin{enumerate}[(a)]
\item For every $x\in\Zp$, $\fkf(x+T)$ is also a random $p$-apolynomial of degree $d$.
\item For every $x\in\Zp$, $(\fkf^{(k)}(x)/k!)_{i=0}^d\in\Zp^{d+1}$ is a random $p$-adic vector.
\item For $s\in\bbN$, $\bbP(\|\fkf\|\leq p^{-s})=p^{-s(d+1)}$. In particular, for all $k\geq 1$,
\[
\bbE\ln^k\frac{1}{\|\fkf\|}\leq k^{k}.
\]
\end{enumerate}
\end{prop}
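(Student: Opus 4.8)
The plan is to reduce all three parts to Proposition~\ref{prop:randompadicvectors} by identifying a degree-$d$ polynomial $f=\sum_{k=0}^d f_kT^k$ with its coefficient vector $(f_0,\dots,f_d)\in\Zp^{d+1}$; under this identification a random $p$-adic polynomial of degree $d$ is exactly a random $p$-adic vector in $\Zp^{d+1}$, and $\|f\|$ is the max-norm of the coefficient vector.

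For (a) I would write, via Taylor's theorem, the coefficient vector of $\fkf(x+T)$ as $A\fkf$, where $A=(A_{jk})_{0\le j,k\le d}$ has $A_{jk}=\binom{k}{j}x^{k-j}$ for $k\ge j$ and $A_{jk}=0$ for $k<j$. The point is that $x\in\Zp$ and $\binom{k}{j}\in\bbZ\subseteq\Zp$ force every entry of $A$ into $\Zp$, while $A$ is upper triangular with unit diagonal, so $\det A=1$ and $A\in\GL_{d+1}(\Zp)$; the unchanged leading coefficient keeps the degree equal to $d$. Proposition~\ref{prop:randompadicvectors}(a), applied with translation vector $0$, then gives that $A\fkf=\fkf(x+T)$ is again a random $p$-adic vector, i.e.\ a random $p$-adic polynomial of degree $d$. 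Part (b) then comes for free: by Taylor's theorem $(\fkf^{(k)}(x)/k!)_{k=0}^d$ \emph{is} the vector $A\fkf$, which was just shown to be a random $p$-adic vector.

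For (c) the identity $\bbP(\|\fkf\|\le p^{-s})=p^{-(d+1)s}$ is exactly Proposition~\ref{prop:randompadicvectors}(b) with $N=d+1$. For the moment bound I would set $Y:=\ln(1/\|\fkf\|)$ (a.s.\ finite, nonnegative, and supported on $\{m\ln p\mid m\in\bbN\}$, since $\bbP(\fkf=0)=0$), observe that $\|\fkf\|\le e^{-t}$ iff $\|\fkf\|\le p^{-\lceil t/\ln p\rceil}$, and hence deduce the clean exponential tail
\[
\bbP(Y\ge t)=p^{-(d+1)\lceil t/\ln p\rceil}\le e^{-(d+1)t}\le e^{-t}\qquad(t\ge 0).
\]
The layer-cake formula then finishes it:
\[
\bbE\,Y^k=\int_0^\infty k\,t^{k-1}\bbP(Y\ge t)\,dt\le\int_0^\infty k\,t^{k-1}e^{-t}\,dt=k!\le k^k.
\]

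The whole argument is essentially bookkeeping; the only slightly delicate point is the discrete-to-continuous step yielding $\bbP(Y\ge t)\le e^{-t}$ for every real $t\ge 0$, which is what allows the continuous integral to be used — I do not anticipate a genuine obstacle here.
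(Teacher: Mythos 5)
Your argument is correct and follows essentially the same route as the paper: reduce to Proposition~\ref{prop:randompadicvectors} by viewing a degree-$d$ polynomial as its coefficient vector, note that $f\mapsto f(x+T)$ is a $\GL_{d+1}(\Zp)$-map (you spell out the unipotent upper-triangular matrix, which the paper only asserts), read off (b) from the Taylor coefficients, and for (c) pass from the discrete tail $\bbP(\|\fkf\|\le p^{-s})=p^{-(d+1)s}$ to the continuous bound $\bbP(\ln(1/\|\fkf\|)\ge t)\le e^{-t}$ and integrate by the layer-cake formula to get $\Gamma(k+1)\le k^k$.
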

\begin{proof}
(a) This follows from Proposition~\ref{prop:randompadicvectors} (a) and the fact that $f\mapsto f(x+T)$ is a $\mathrm{GL}$-transformation of the space of $p$-adic integer polynomials of degree $d$.

(b) Since $\fkf(x+T)$ is a random $p$-adic polynomial of degree $d$, the coefficients of $\fkf(x+T)$ form a random $p$-adic vector in $\Zp^{d+1}$, by definition of random $p$-adic polynomial.

(c) The first part follows from Proposition~\ref{prop:randompadicvectors} (b). For the second part, note that for all $s\geq 0$, not necessarily a natural number,
\[
\bbP(\|\fkf\|\leq p^{-s})=\bbP(\|\fkf\|\leq p^{-\lceil s\rceil})=p^{-\lceil s\rceil}\leq p^{-s}.
\]
Thus, for $s\geq 0$,
\[
\bbP\left(\ln\frac{1}{\|\fkf\|}\geq s\right)\leq \enumber^{-s},
\]
and so for $k\geq 1$,
\[
\bbE\ln^k\frac{1}{\|\fkf\|}=\int_{0}^\infty ku^{k-1}\bbP\left(\ln\frac{1}{\|\fkf\|}\geq u\right)\,\mathrm{d}u\leq \int_{0}^\infty ku^{k-1}e^{-u}\,\mathrm{d}u\leq \Gamma(k+1)\leq k^k,
\]
as claimed.
\end{proof}

\subsection{Probabilistic analysis of the condition number}

The analysis of the condition number leads us to the following:

\begin{theo}\label{theo:probcondition}
Let $\fkf\in\Zp[T]$ a random $p$-adic polynomial of degree $d$. Then, for every $x\in\Zp$ and $s\geq 0$,
\[
\bbP\left(\kappa(\fkf,x)\geq p^{s}\right)\leq p^{-2s},
\]
and, for every $s\geq 0$,
\[
\bbP\left(\kappa(\fkf)\geq p^{s}\right)\leq p^{-s}.
\]
\end{theo}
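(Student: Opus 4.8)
\textbf{Proof plan for Theorem~\ref{theo:probcondition}.}

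The plan is to bound $\kappa(\fkf,x)$ from above by the reciprocal of $\max\{|\fkf(x)|,|\fkf'(x)|\}$ after normalizing by $\|\fkf\|$, and to exploit the fact---recorded in Proposition~\ref{prop:randompadicpolynomial}(b)---that the Taylor coefficients $(\fkf^{(k)}(x)/k!)_{k=0}^d$ at a fixed $x\in\Zp$ form a uniformly distributed random $p$-adic vector in $\Zp^{d+1}$. First I would observe that $\kappa(\fkf,x)\geq p^s$ forces $\max\{|\fkf(x)|,|\fkf'(x)|\}\leq p^{-s}\|\fkf\|\leq p^{-s}$, i.e. the \emph{first two} coordinates of that random vector, namely $\fkf(x)$ and $\fkf'(x)$, both have norm at most $p^{-s}$. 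By Proposition~\ref{prop:randompadicvectors}(b) applied with $N=2$ to the independent pair $(\fkf(x),\fkf'(x))$---which by Proposition~\ref{prop:randompadicvectors}(c) is itself a random $p$-adic vector in $\Zp^2$---this event has probability exactly $p^{-2\lceil s\rceil}\leq p^{-2s}$. That gives the local bound.

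For the global bound, the issue is that $\kappa(\fkf)=\sup_{z\in\Zp}\kappa(\fkf,z)$ is a supremum over uncountably many $z$, so a naive union bound over the local estimate does not work directly. The key step is a \emph{reduction to a finite union} using the 2nd Lipschitz property (Theorem~\ref{theo:conditionproperties}(3)): if $\kappa(\fkf,z)\geq p^{s}$ and $|z-z'|\leq p^{-s}$, then $1/\kappa(\fkf,z')\leq\max\{1/\kappa(\fkf,z),|z-z'|\}\leq p^{-s}$, so $\kappa(\fkf,z')\geq p^{s}$ as well; hence the event $\{\kappa(\fkf)\geq p^s\}$ is covered by the events $\{\kappa(\fkf,x_j)\geq p^s\}$ as $x_j$ ranges over a set of representatives of $\Zp/p^{\lceil s\rceil}\Zp$, of which there are $p^{\lceil s\rceil}$. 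Actually I would be a little more careful: I would take $x_j$ ranging over $\Zp/p^{m}\Zp$ for a parameter $m$ and optimize. With $m=\lceil s\rceil$, the union bound gives $\bbP(\kappa(\fkf)\geq p^s)\leq p^{\lceil s\rceil}\cdot p^{-2\lceil s\rceil}=p^{-\lceil s\rceil}\leq p^{-s}$, which is exactly the claimed bound.

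The main obstacle I anticipate is making the covering argument fully rigorous, in particular handling non-integer $s$ cleanly (where one replaces $s$ by $\lceil s\rceil$ in the ball radius but keeps $p^{-s}$ on the right, as in the proof of Proposition~\ref{prop:randompadicpolynomial}(c)) and confirming that the local estimate $\bbP(\kappa(\fkf,x)\geq p^s)\leq p^{-2s}$ holds \emph{uniformly} in $x$ with no hidden dependence on $x$ or $d$---which it does, precisely because $(\fkf(x),\fkf'(x))$ is a uniform random vector in $\Zp^2$ regardless of $x$. One should also double-check that $\|\fkf\|\le 1$ always (true since $\fkf\in\Zp[T]$), so that the normalization $p^{-s}\|\fkf\|\le p^{-s}$ used in the local step is legitimate; this is where the restriction to $\Zp$-coefficients (rather than $\Qp$) is used. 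Everything else is a routine combination of Theorem~\ref{theo:conditionproperties} with Proposition~\ref{prop:randompadicvectors}.
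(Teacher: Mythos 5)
Your local bound is essentially the paper's: both boil down to the observation that $\kappa(\fkf,x)\geq p^s$ forces $\max\{|\fkf(x)|,|\fkf'(x)|\}\leq p^{-s}$ and that $(\fkf(x),\fkf'(x))$ is a uniform random vector in $\Zp^2$ (the paper phrases this via Proposition~\ref{prop:padicprobability} on the ratio $\|\fkf\|/\|A\fkf\|$, you simplify slightly by using $\|\fkf\|\leq 1$, but the estimate $p^{-2s}$ is the same). For the global bound your route is genuinely different from the paper's. You discretize: use the 2nd Lipschitz property to push the event $\{\kappa(\fkf)\geq p^s\}$ down to a finite set of $p^{\lceil s\rceil}$ representatives of $\Zp/p^{\lceil s\rceil}\Zp$, and conclude by a union bound, getting $p^{\lceil s\rceil}\cdot p^{-2\lceil s\rceil}=p^{-\lceil s\rceil}\leq p^{-s}$. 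The paper instead keeps the problem continuous: it uses the 2nd Lipschitz property to show that $\kappa(\fkf)\geq p^s$ forces the level set $\{x:\kappa(\fkf,x)\geq p^s\}$ to have Haar measure at least $p^{-s}$, then applies Markov's inequality (in the polynomial variable) followed by Tonelli to swap the order of integration, arriving at $p^s\cdot p^{-2s}=p^{-s}$. Both arguments are correct and give the identical bound; the covering/union-bound argument is more elementary (no Fubini/Markov machinery, everything finite), while the Markov/Tonelli argument is the one more commonly seen in the real/complex condition-number literature and tends to extend more cleanly when the underlying space is not a disjoint union of equal-volume balls. One small point worth stating explicitly in a write-up: since $\kappa(\fkf,x)$ takes values in $p^{\bbZ}\cup\{\infty\}$, the inequality $\kappa(\fkf,x)\geq p^s$ is equivalent to $\kappa(\fkf,x)\geq p^{\lceil s\rceil}$, which is what makes the $\lceil s\rceil$-covering the right scale and removes any headache about non-integer $s$.
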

\begin{cor}\label{cor:probcondition}
Let $\fkf\in\Zp[T]$ a random $p$-adic polynomial of degree $d$. Then for all $k\geq 1$,
\[
\bbE\ln^k\kappa(\fkf)\leq k^k.
\]
\end{cor}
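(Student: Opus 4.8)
The plan is to deduce this moment bound directly from the tail estimate of Theorem~\ref{theo:probcondition}, following verbatim the layer-cake argument already used in the proof of Proposition~\ref{prop:randompadicpolynomial}(c). First I would note that $\kappa(\fkf)\geq 1$ surely, by the bounds in Theorem~\ref{theo:conditionproperties}(0), so that $\ln\kappa(\fkf)\geq 0$ and every integral below may be taken over $[0,\infty)$.

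Next I would convert the tail bound $\bbP(\kappa(\fkf)\geq p^{s})\leq p^{-s}$, which Theorem~\ref{theo:probcondition} provides for every real $s\geq 0$, into a tail bound on $\ln\kappa(\fkf)$. Since $p\geq 2$ we have $\ln p>0$, so for $u\geq 0$ we may set $s=u/\ln p\geq 0$ and obtain
\[
\bbP\big(\ln\kappa(\fkf)\geq u\big)=\bbP\big(\kappa(\fkf)\geq p^{u/\ln p}\big)\leq p^{-u/\ln p}=\enumber^{-u}.
\]

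Finally I would integrate using the standard formula for the moments of a nonnegative random variable:
\[
\bbE\ln^k\kappa(\fkf)=\int_{0}^{\infty}ku^{k-1}\,\bbP\big(\ln\kappa(\fkf)\geq u\big)\,\mathrm{d}u\leq\int_{0}^{\infty}ku^{k-1}\enumber^{-u}\,\mathrm{d}u=\Gamma(k+1)=k!\leq k^{k},
\]
where the last inequality is just $k!=\prod_{j=1}^{k}j\leq\prod_{j=1}^{k}k=k^{k}$.

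There is essentially no obstacle here: the argument is routine once Theorem~\ref{theo:probcondition} is in hand. The only point requiring a little care is that the tail bound in Theorem~\ref{theo:probcondition} must hold for all \emph{real} $s\geq 0$ (not merely integer $s$), which is precisely what the change of variables $s=u/\ln p$ uses; this is indeed how that theorem is stated.
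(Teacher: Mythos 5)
Your proof is correct and follows essentially the same layer-cake argument as the paper: reduce the tail estimate of Theorem~\ref{theo:probcondition} to $\bbP(\ln\kappa(\fkf)\geq u)\leq\enumber^{-u}$ and integrate to get $\Gamma(k+1)\leq k^k$. The only cosmetic difference is that the paper passes through $\left\lceil u/\ln p\right\rceil$ using $\kappa(\fkf)\in p^{\bbN}$ before invoking the tail bound (since the underlying Proposition~\ref{prop:padicprobability} is stated for integer $s$), whereas you apply the theorem directly at the real exponent $s=u/\ln p$; given that the theorem is indeed stated for all real $s\geq 0$, and the discreteness of $\kappa(\fkf)$ makes that statement follow from the integer case, your shortcut is harmless and yields the identical bound.
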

\begin{remark}
Note that if $d\geq 2$, then $\frac{1}{2}p^{-2s}\leq \bbP\left(\kappa(f,x)\geq p^{s}\right)$. So the bound in Theorem~\ref{theo:probcondition} for the local condition number is almost-optimal.
\end{remark}
\begin{remark}
Note that this shows that the Strassman tree's depth is not only very small with high probability, but it is of constant depth with high probability.
\end{remark}

The above theorem will follow from the following proposition.

\begin{prop}\label{prop:padicprobability}
Let $A:\Zp^N\rightarrow \Zp^r$ be a linear orthogonal projection, i.e., $A$ can be extended to a linear in $\mathrm{GL}_N(\Zp)$. Then, for all $s\in\bbZ_{>0}$,
    \[
    \bbP\left(\frac{\|x\|}{\|Ax\|}\geq p^{s}\right)=\frac{\left(1-p^{r-N}\right)}{\left(1-p^{-N}\right)}p^{-rs}\leq p^{-rs}.
    \]
\end{prop}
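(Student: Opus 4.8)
The plan is to reduce the computation of $\bbP\bigl(\|x\|/\|Ax\| \geq p^s\bigr)$ to a counting problem over the finite quotient $\Zp^N / p^s\Zp^N \cong (\bbZ/p^s\bbZ)^N$, exploiting that $A$, being extendable to an element of $\GL_N(\Zp)$, induces a surjective $\Zp$-linear map with a well-behaved reduction mod $p^s$. First I would observe that since $A$ extends to $\tilde A \in \GL_N(\Zp)$, part (a) of Proposition~\ref{prop:randompadicvectors} lets me replace $x$ by $\tilde A x$, so without loss of generality $A$ is the coordinate projection onto the first $r$ coordinates; then $Ax = (x_1,\ldots,x_r)$ and, by part (c) of that proposition, $(x_1,\ldots,x_r)$ and $(x_{r+1},\ldots,x_N)$ are independent random $p$-adic vectors in $\Zp^r$ and $\Zp^{N-r}$ respectively.

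Next I would rewrite the event. We have $\|x\|/\|Ax\| \geq p^s$ exactly when $\|x_{r+1},\ldots,x_N\| = \|x\| \geq p^s\|Ax\|$, i.e. when $\|(x_1,\ldots,x_r)\| \leq p^{-s}\|(x_{r+1},\ldots,x_N)\|$. Conditioning on the value $\|(x_{r+1},\ldots,x_N)\| = p^{-t}$ for $t \in \bbN$ (which forces $t \leq s$ implicitly, but more precisely we need $\|(x_1,\dots,x_r)\| \le p^{-s-t}$), and using the independence together with part (b) of Proposition~\ref{prop:randompadicvectors}, the conditional probability that $\|(x_1,\ldots,x_r)\| \leq p^{-(s+t)}$ is $p^{-r(s+t)}$. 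Summing over $t$ with weights $\bbP(\|(x_{r+1},\ldots,x_N)\| = p^{-t}) = (1 - p^{-(N-r)})p^{-(N-r)t}$ from part (b) gives
\[
\bbP\left(\frac{\|x\|}{\|Ax\|}\geq p^{s}\right) = \sum_{t=0}^{\infty} (1-p^{-(N-r)})\,p^{-(N-r)t}\,p^{-r(s+t)} = (1-p^{-(N-r)})\,p^{-rs}\sum_{t=0}^\infty p^{-Nt}.
\]
Evaluating the geometric series $\sum_{t\ge 0}p^{-Nt} = 1/(1-p^{-N})$ yields $\bigl(1-p^{-(N-r)}\bigr)p^{-rs}/\bigl(1-p^{-N}\bigr)$; rewriting $1-p^{-(N-r)} = 1 - p^{r-N}$ gives the claimed closed form, and since $1 - p^{r-N} \leq 1 \leq 1 - p^{-N}$... — here one must be slightly careful, as $1-p^{r-N} < 1-p^{-N}$ precisely because $r > 0$, so the ratio is $< 1$, giving the bound $\leq p^{-rs}$.

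The one subtlety I expect to need care — the only real ``obstacle'' — is the bookkeeping of which $t$ contribute and making sure the boundary cases ($\|(x_{r+1},\dots,x_N)\|$ attaining its maximum possible value, or the event $\|Ax\| = 0$ which has probability zero and is harmlessly absorbed) are handled so that the geometric series is exactly over all $t \geq 0$ with no missing or doubled terms. An alternative, perhaps cleaner, route avoiding the conditioning is to count directly in $(\bbZ/p^s\bbZ)^N$: the event $\|x\|/\|Ax\|\ge p^s$ depends only on $x \bmod p^s$ after noting $\|Ax\| \ge p^{-s}\|x\|$ forces $\|x\|\ge p^{-s}\|Ax\|$ trivially while the reverse inequality is the content; but the conditioning argument above is the most transparent and I would present that one.
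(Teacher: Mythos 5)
Your proof is correct and follows the same overall route as the paper's: reduce via $\mathrm{GL}_N(\Zp)$-invariance of the Haar measure (Smith normal form) to the coordinate projection $A = (\bbI_r \mid \mathbb{O})$, split $x = (y,z)$ into independent blocks $y \in \Zp^r$ and $z \in \Zp^{N-r}$, condition on the valuation of one block, and sum a geometric series. The one substantive difference is which block you condition on: you fix $\|z\| = p^{-t}$ (the complementary block), whereas the paper fixes $\|y\| = \|Ax\| = p^{-k}$. Your choice turns out to be the cleaner one --- after conditioning, the event is simply $\|y\| \leq p^{-(s+t)}$ with probability exactly $p^{-r(s+t)}$, and the sum over $t \geq 0$ is a single geometric series giving the closed form in one step. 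The paper's conditioning forces a restriction $k \geq s$ (since otherwise $\|z\| \geq p^{s-k} > 1$ is impossible) and leads to a two-term geometric computation, which is why the paper defers the ``elementary computation'' to the reader. Your parenthetical ``which forces $t \leq s$ implicitly'' is a small slip ($t$ is unbounded, as your own sum shows), but it does not affect the argument. The final inequality is also correctly justified once you note, as you do on second thought, that $1 - p^{r-N} < 1 - p^{-N}$ precisely because $r > 0$, so the prefactor is strictly less than $1$.
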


\begin{proof}[Proof of Theorem~\ref{theo:probcondition}]
The first part follows from Proposition~\ref{prop:padicprobability}, since the map
\[f\mapsto (f(x),f'(x))\in\Zp^2\]
is an orthogonal projection, since it can be extended to the $\mathrm{GL}_{d+1}$ map $f\mapsto f(x+T)$, obtained by performing a translation of the variable by $x\in\Zp$.

For the second part, if $\kappa(f)\geq p^s$, then for some $x_*\in\Zp$, $\kappa(f,x_*)\geq p^s$. Therefore, by 2nd Lipschitz property, for all $y\in \overline{B}(x,p^{-s})$, $\kappa(f,y)\geq p^s$. Hence $\kappa(f)\geq p^s$ implies
\[
\bbP_\fkx(\kappa(f,\fkx)\geq p^{s})\geq p^{-s},
\]
where $\fkx\in\Zp$ is a random $p$-adic. In this way,
\begin{align*}
    \bbP_\fkf(\kappa(\fkf)\geq p^s)&\leq \bbP_\fkf(\bbP_\fkx(\kappa(\fkf,\fkx)\geq p^{-s})\geq p^s)&\text{(Above discussion)}\\
    &\leq p^s\bbE_\fkf\bbP_\fkx(\kappa(\fkf,\fkx)\geq p^{-s})&\text{(Markov's inequality)}\\
    &=p^s\bbE_\fkx\bbP_\fkf(\kappa(\fkf,\fkx)\geq p^{-s})&\text{(Tonelli's theorem)}\\
    &\leq p^{-s}&\text{(First part)}
\end{align*}
Note that we can apply Tonelli's theorem, because the Haar measure of a product is the product of the Haar measures, $\bbP_\fkx(\kappa(\fkf,\fkx)\geq p^{-s})=\bbE_\fkx\chi_{\{x\mid\kappa(\fkf,x)\geq p^{-s}\}}$---$\chi$ is the indicator function---and $\bbP_\fkf(\kappa(\fkf,\fkx)\geq p^{-s})=\bbE_\fkf\chi_{\{f\mid\kappa(f,\fkx)\geq p^{-s}\}}$.
\end{proof}
\begin{proof}[Proof of Corollary~\ref{cor:probcondition}]
Since $\kappa(\fkf)\geq 1$, by Theorem~\ref{theo:conditionproperties} (0), we have that
\[
\bbE\ln^k\kappa(\fkf)=\int_1^\infty ku^{k-1}\bbP(\ln\kappa(\fkf)\geq u)\,\mathrm{d}u.
\]
Now, 
\begin{align*}
  \bbP(\ln\kappa(\fkf)\geq u)&=\bbP\left(\kappa(\fkf)\geq \enumber^u\right)\\
  &=\bbP\left(\kappa(\fkf)\geq p^{\frac{u}{\ln p}}\right)\\
  &=\bbP\left(\kappa(\fkf)\geq p^{\left\lceil\frac{u}{\ln p}\right\rceil}\right)&(\kappa(\fkf)\in p^{\bbN})\\
  &\leq p^{-\left\lceil\frac{u}{\ln p}\right\rceil}&\text{(Theorem~\ref{theo:probcondition})}\\
  &\leq p^{-\frac{u}{\ln p}}&\left(\left\lceil\frac{u}{\ln p}\right\rceil\geq \frac{u}{\ln p}\right)\\
  &\leq \enumber^{-u}.&
\end{align*}
Hence
\[
\bbE\ln^k\kappa(\fkf)\leq \Gamma(k+1)\leq k^k,
\]
as we wanted to show.
\end{proof}

\begin{proof}[Proof of Proposition~\ref{prop:padicprobability}]
By the Smith Normal Form and the fact that random $p$-adic vectors are $\mathrm{GL}_N$-invariant, we can assume, without loss of generality, that $A=\begin{pmatrix}\bbI_r|\mathbb{O}\end{pmatrix}$.

Now, write $\fkx=(\fky,\fkz)$. We have that $\fky\in\Zp^r$ and $\fkz\in\Zp^{N-r}$ are inpendent random $p$-adic vectors. Therefore
\begin{align*}
    \bbP\left(\frac{\|x\|}{\|Ax\|}\geq p^{s}\right)&=\bbP(\max\{\|\fky\|,\|\fkz\|\}\geq p^s\|\fky\|)&\\
    &=\sum_{k=0}^{\infty}\bbP(\max\{\|\fky\|,\|\fkz\|\}\geq p^s\|\fky\|,\,\|\fky\|=p^{-k})&\text{(Decomposition in cases)}\\
    &=\sum_{k=0}^{\infty}\bbP(\max\{p^{-k},\|\fkz\|\}\geq p^{s-k},\,\|\fky\|=p^{-k})&\\
    &=\sum_{k=s}^{\infty}\bbP(\|\fkz\|\geq p^{s-k},\,\|\fky\|=p^{-k})&(p^{s-k}>p^{-k})\\
    &=\sum_{k=s}^{\infty}\bbP(\|\fkz\|\geq p^{s-k})\bbP\|\fky\|=p^{-k})&\text{(Independece)}\\
    &=\sum_{k=s}^{\infty}\left(1-\bbP(\|\fkz\|\leq p^{s-k-1})\right)\bbP\|\fky\|=p^{-k})&\\
    &=\sum_{k=s}^{\infty}\left(1-p^{(N-r)(s-k-1)}\right)\left(1-p^{-r}\right)p^{-rk}&\text{(Proposition~\ref{prop:padicprobability})}
\end{align*}
Finally, the proof ends after summing some geometric series and an elementary computation.
\end{proof}

\subsection{Probabilistic analysis of the Strassman count}

We provide probabilistic bounds for the Strassman count at a point. We divide our analysis depending on whether we are counting over the full $\Zp$ or over an smaller closed ball $\overline{B}(x,p^{-s})$. As we will see, the behaviour is very different in these two cases.

\begin{theo}\label{theo:strassmanprob1}
Let $\fkf\in\Zp[T]$ a random $p$-adic polynomial of degree $d$ and $x\in \Zp$. Then: 
\begin{equation}
    \bbP(\St(\fkf;x,1)=\ell)=\frac{\left(1-p^{-1}\right)}{\left(1-p^{-(d+1)}\right)}p^{\ell-d}
\end{equation}
In particular,
\[\bbE\St(\fkf;x,1)=d+\frac{d+1}{p^{d+1}-1}-\frac{1}{p-1}.\]
\end{theo}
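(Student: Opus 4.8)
The plan is to reduce to $x=0$ and then turn the statement into a short combinatorial computation on the valuations of the coefficients. By Proposition~\ref{prop:randompadicpolynomial}(a), $\fkf(x+T)$ is again a uniformly random $p$-adic polynomial of degree $d$, and $\St(\fkf;x,1)=\St(\fkf(x+T))$; hence it suffices to compute $\bbP(\St(\fkf)=\ell)$ for $\fkf=\sum_{k=0}^d\fkf_kT^k$ with i.i.d.\ uniform coefficients.

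The key step is a reformulation of the event. Writing $v_k$ for the $p$-adic valuation of $\fkf_k$, so that $|\fkf_k|=p^{-v_k}$ and $\|\fkf\|=\max_k|\fkf_k|$, I would first show, directly from $\St(f)=\max\{k\mid \forall l<k,\ |f_l|\le |f_k|\}$, that $\St(\fkf)=\ell$ holds if and only if $|\fkf_\ell|=\|\fkf\|$ and $|\fkf_k|<\|\fkf\|$ for every $k>\ell$; that is, $\ell$ is the \emph{last} index at which the maximal coefficient norm is attained. In valuation language this reads: $v_\ell\le v_k$ for all $k<\ell$ and $v_\ell<v_k$ for all $k>\ell$. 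Verifying this equivalence is elementary but must be done carefully because of the asymmetry between the two conditions (ties with $v_\ell$ are allowed among the lower-index coefficients but forbidden among the higher-index ones), and I expect this bookkeeping to be the only genuinely delicate point.

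Next I would use that the $v_k$ are i.i.d.\ with $\bbP(v_k\ge j)=p^{-j}$ for $j\in\bbN$ (Proposition~\ref{prop:randompadicvectors}(b) with $N=1$), so $\bbP(v_k=j)=(1-p^{-1})p^{-j}$. Condition on $v_\ell=j$: by independence, each of the $\ell$ coefficients of index $<\ell$ contributes a factor $\bbP(v_k\ge j)=p^{-j}$, and each of the $d-\ell$ coefficients of index $>\ell$ contributes a factor $\bbP(v_k\ge j+1)=p^{-(j+1)}$. Multiplying and summing the geometric series over $j\ge0$,
\[
\bbP(\St(\fkf)=\ell)=\sum_{j\ge0}(1-p^{-1})p^{-j}\cdot p^{-j\ell}\cdot p^{-(j+1)(d-\ell)}=(1-p^{-1})\,p^{\ell-d}\sum_{j\ge0}p^{-j(d+1)}=\frac{1-p^{-1}}{1-p^{-(d+1)}}\,p^{\ell-d},
\]
which is the claimed formula.

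Finally, for the expectation I would write $\bbE\,\St(\fkf)=\frac{1-p^{-1}}{1-p^{-(d+1)}}\,p^{-d}\sum_{\ell=0}^{d}\ell\,p^{\ell}$, insert the standard closed form $\sum_{\ell=0}^{d}\ell\,x^{\ell}=\frac{x-(d+1)x^{d+1}+d\,x^{d+2}}{(1-x)^2}$ at $x=p$, factor the common $(p-1)$ out of numerator and denominator, and simplify; a routine manipulation brings the result to $d+\frac{d+1}{p^{d+1}-1}-\frac{1}{p-1}$. None of this last part is conceptually difficult, so the heart of the argument remains the "last maximal index" characterization and the geometric summation.
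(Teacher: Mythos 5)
Your proof is correct and essentially identical to the paper's. The only cosmetic difference is the reduction to $x=0$: you invoke the distributional invariance of Proposition~\ref{prop:randompadicpolynomial}(a), while the paper observes that $\St(f;x,1)=\St(f;0,1)$ holds deterministically for every $x\in\Zp$ (via Theorem~\ref{theo:strassmannexactforcomplex}, since $\overline{B}(x,1)=\Zp$); beyond that, your characterization of $\{\St(\fkf)=\ell\}$ as ``$\ell$ is the last index attaining $\|\fkf\|$,'' the conditioning on the valuation of $\fkf_\ell$, and the geometric summation are exactly the paper's computation.
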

\begin{theo}\label{theo:strassmanprob2}
Let $\fkf\in\Zp[T]$ a random $p$-adic polynomial of degree $d$, $x\in \Zp$ and $s\geq 1$. Then: 
\begin{equation}
    \bbP(\St(\fkf;x,p^{-s})\geq\ell)\leq \frac{4}{3}p^{-s\binom{\ell+1}{2}}.
\end{equation}
Moreover, for $k\geq 1$,
\begin{equation}
\bbE\St(\fkf;x,p^{-s})^k\leq 2p^{-s}\left(1+\left(\frac{k}{s\ln p}\right)^{\frac{k}{2}}\right).
\end{equation}
\end{theo}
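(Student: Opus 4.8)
My plan is to reduce both statements to the coefficients of the translated polynomial and then run an essentially routine Chernoff/moment computation for independent $p$-adic variables. Write $g:=\fkf(x+p^sT)$, so $\St(\fkf;x,p^{-s})=\St(g)$; the coefficient of $T^k$ in $g$ is $a_k=\mathfrak{b}_kp^{ks}$ where $\mathfrak{b}_k:=\fkf^{(k)}(x)/k!$. By Proposition~\ref{prop:randompadicpolynomial}(b) the vector $(\mathfrak{b}_k)_{k=0}^d$ is a random $p$-adic vector in $\Zp^{d+1}$, hence $\mathfrak{b}_0,\dots,\mathfrak{b}_d$ are independent and uniform in $\Zp$ (Proposition~\ref{prop:randompadicvectors}) and $|a_k|=|\mathfrak{b}_k|p^{-ks}$. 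The load-bearing elementary observation is that $\St(\sum a_kT^k)$ equals the \emph{largest} index at which $|a_k|$ attains $\max_j|a_j|$. Consequently, if $\St(\fkf;x,p^{-s})\ge\ell$ then some $k\ge\ell$ has $|a_k|=\max_j|a_j|$; since $|a_k|=|\mathfrak{b}_k|p^{-ks}\le p^{-ks}\le p^{-\ell s}$ (here $s\ge1$), this forces $|a_l|\le p^{-\ell s}$, i.e.\ $|\mathfrak{b}_l|\le p^{-(\ell-l)s}$, for \emph{every} $l\in\{0,\dots,\ell-1\}$.

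For the tail bound, these $\ell$ events depend only on the independent variables $\mathfrak{b}_0,\dots,\mathfrak{b}_{\ell-1}$, and $\bbP(|\mathfrak{b}_l|\le p^{-m})=p^{-m}$ by Proposition~\ref{prop:randompadicvectors}(b); multiplying,
\[
\bbP\bigl(\St(\fkf;x,p^{-s})\ge\ell\bigr)\le\prod_{l=0}^{\ell-1}p^{-(\ell-l)s}=p^{-s\sum_{m=1}^{\ell}m}=p^{-s\binom{\ell+1}{2}},
\]
which is in fact slightly sharper than the stated inequality (the constant $\tfrac43$ is not needed).

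For the moments I would start from $\bbE\,\St^k=\sum_{\ell\ge1}(\ell^k-(\ell-1)^k)\,\bbP(\St\ge\ell)$, bound the $\ell=1$ term by $p^{-s}$, and for $\ell\ge2$ use $\binom{\ell+1}{2}\ge 1+\tfrac{\ell^2}{2}$ (equivalently $(\ell+2)(\ell-1)\ge\ell^2$) to get $\bbP(\St\ge\ell)\le p^{-s}p^{-s\ell^2/2}$. Writing $\ell^k-(\ell-1)^k=\int_{\ell-1}^{\ell}kt^{k-1}\,\diffd t$ and using $p^{-s\ell^2/2}\le p^{-st^2/2}$ on $[\ell-1,\ell]$, the tail of the sum telescopes into $\int_1^\infty kt^{k-1}p^{-st^2/2}\,\diffd t$, so
\[
\bbE\,\St^k\le p^{-s}\left(1+\int_0^\infty kt^{k-1}e^{-\frac{s\ln p}{2}t^2}\,\diffd t\right)=p^{-s}\left(1+\Gamma\!\Bigl(\tfrac k2+1\Bigr)\Bigl(\tfrac{2}{s\ln p}\Bigr)^{k/2}\right).
\]
The elementary bound $\Gamma(\tfrac k2+1)\,2^{k/2}\le 2k^{k/2}$ (check $k=1$ by hand; for $k\ge2$ use $\Gamma(x+1)\le x^x$ with $x=k/2$) then gives exactly $\bbE\,\St^k\le 2p^{-s}\bigl(1+(k/(s\ln p))^{k/2}\bigr)$.

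The genuinely nontrivial step is the combinatorial description of $\St$, together with the remark that a coefficient of maximal absolute value sitting at an index $\ge\ell$ is automatically $p^{-\ell s}$-small; once these are in place the tail bound is immediate. In the moment estimate the only point that needs care is organizing the split so that the \emph{full} factor $p^{-s}$ survives — a crude bound on $\binom{\ell+1}{2}$ only yields $p^{-s/2}$ — and tracking the constant accurately through the Gaussian integral and Stirling's inequality to land precisely on $2p^{-s}(1+(k/(s\ln p))^{k/2})$.
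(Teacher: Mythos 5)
Your proof is correct and reaches the tail bound by a genuinely cleaner route than the paper's. The paper starts from the weaker implication that $\St(\fkf;x,p^{-s})\geq\ell$ means some coefficient $|a_i|$ of $g:=\fkf(x+p^sT)$ with $i\geq\ell$ dominates $|a_j|$ for every $j<\ell$; it then takes a union bound over $i=\ell,\dots,d$, conditions on the value of $|\fkf_i|$, and sums several geometric series, and the union bound together with those geometric tails is precisely where the factor $\tfrac43$ arises. You instead notice the sharper structural fact that $\St(g)$ equals the largest index at which $|a_k|$ attains $\max_j|a_j|$, so $\St\geq\ell$ already forces $\max_j|a_j|\leq p^{-\ell s}$ deterministically, since $|a_k|=|\fkf^{(k)}(x)/k!|\,p^{-ks}\leq p^{-ks}$ for all $k$; this pins down the $\ell$ independent uniform variables $\fkf^{(l)}(x)/l!$ with $l<\ell$ directly, with no union bound and no conditioning, and yields $p^{-s\binom{\ell+1}{2}}$ exactly, so the $\tfrac43$ in the statement is genuinely superfluous in your version. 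For the moments you use the discrete layer-cake identity $\bbE\St^k=\sum_{\ell\geq1}(\ell^k-(\ell-1)^k)\bbP(\St\geq\ell)$ where the paper uses its continuous analogue $\int_0^\infty ku^{k-1}\bbP(\St\geq u)\,\mathrm{d}u$, but the subsequent ingredients, namely the split via $\binom{\ell+1}{2}\geq1+\tfrac{\ell^2}{2}$ for $\ell\geq2$, the Gaussian integral, and the Stirling-type estimate on $\Gamma(\tfrac k2+1)$, are essentially those of the paper, and you land on the same bound.
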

\begin{cor}\label{cor:strassmanprob3}
Let $\fkf\in\Zp[T]$ a random $p$-adic polynomial of degree $d$ and $s\geq 1$. Then, for $k\geq 1$,
\begin{equation}
\bbE\max_{n=0}^{p^s-1}\St(\fkf;n,p^{-s})^k\leq \bbE\sum_{n=0}^{p^s-1}\St(\fkf;n,p^{-s})^k\leq 2\left(1+\left(\frac{k}{s\ln p}\right)^{\frac{k}{2}}\right)
\end{equation}
\end{cor}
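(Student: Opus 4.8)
The plan is to derive this from Theorem~\ref{theo:strassmanprob2} by a linearity-of-expectation argument over the $p^s$ closed balls $\overline{B}(n,p^{-s})$, $n=0,\ldots,p^s-1$, that partition $\Zp$, combined with the elementary bound $\max_n a_n \le \sum_n a_n$ for nonnegative reals. First I would note that for each fixed $n\in\{0,\ldots,p^s-1\}$, Theorem~\ref{theo:strassmanprob2} applies with $x=n$, giving
\[
\bbE\,\St(\fkf;n,p^{-s})^k\leq 2p^{-s}\left(1+\left(\tfrac{k}{s\ln p}\right)^{k/2}\right).
\]
Then, since $\St(\fkf;n,p^{-s})^k\ge 0$ for all $n$, we have the pointwise (in $\fkf$) inequality $\max_{n=0}^{p^s-1}\St(\fkf;n,p^{-s})^k\le \sum_{n=0}^{p^s-1}\St(\fkf;n,p^{-s})^k$, and taking expectations preserves it.

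Next I would apply linearity of expectation to the sum:
\[
\bbE\sum_{n=0}^{p^s-1}\St(\fkf;n,p^{-s})^k=\sum_{n=0}^{p^s-1}\bbE\,\St(\fkf;n,p^{-s})^k\leq \sum_{n=0}^{p^s-1}2p^{-s}\left(1+\left(\tfrac{k}{s\ln p}\right)^{k/2}\right).
\]
There are exactly $p^s$ terms in the sum, each equal to the same bound, so the factor $p^{-s}$ is cancelled by the count $p^s$, leaving $2\left(1+\left(\tfrac{k}{s\ln p}\right)^{k/2}\right)$, which is precisely the claimed bound. Chaining the two displayed inequalities (max $\le$ sum, then sum's expectation bounded) yields the statement.

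Honestly, there is no real obstacle here: the corollary is a bookkeeping consequence of the per-ball estimate, and the only things to be careful about are (i) that the $p^s$ balls $\overline{B}(n,p^{-s})$ for $n\in[0,p^s-1]\cap\bbZ$ genuinely partition $\Zp$ (so that the count of terms is exactly $p^s$), and (ii) that Theorem~\ref{theo:strassmanprob2} requires $s\ge 1$, which is exactly the hypothesis of the corollary. So the "hard part" is essentially nonexistent — the substance was already done in Theorem~\ref{theo:strassmanprob2}; this corollary just repackages it into a bound on the maximum over all residues, which is the form actually needed for bounding the width of the Strassman tree.
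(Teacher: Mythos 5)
Your proposal is correct and matches the paper's own proof: linearity of expectation over the $p^s$ balls plus the per-ball bound from Theorem~\ref{theo:strassmanprob2}, with the factor $p^{-s}$ cancelling against the $p^s$ terms, and the pointwise inequality $\max \le \sum$ for nonnegative terms handling the first inequality. Nothing to add.
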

\begin{remark}
Using Theorem~\ref{theo:strassmannexactforcomplex}, we can interpret $\bbE\sum_{n=0}^{p^s-1}\St(\fkf;n,p^{-s})$ as
\begin{equation}
    \#\{\zeta\in\Cp\mid \fkf(\zeta)=0,\,\dist(\zeta,\Zp)\leq p^{-s}\},
\end{equation}
with the roots counted with multiplicity. In this way, we have just shown that for a random $p$-adic polynomial $\fkf\in\Zp[T]$ of degree $d$,
\[
\bbE\#\{\zeta\in\Cp\mid \fkf(\zeta)=0,\,\dist(\zeta,\Zp)\leq p^{-s}\}\leq 2\left(1+\sqrt{\frac{1}{s\ln p}}\right),
\]
if $s\geq 1$. In this way, we have that $\fkf$ has very few roots nearby $\Zp$.
\end{remark}
\begin{remark}
Note that this shows that the Strassman tree's width is very small with very high probability, even though the initial count $\St(\fkf,0,1)$ is as big as it can be---almost $d$.
\end{remark}
\begin{proof}[Proof of Theorem~\ref{theo:strassmanprob1}]
Without loss of generality, we can assume that $x=0$, since by the ultrametric inequality and Theorem~\ref{theo:strassmannexactforcomplex}, $\St(f;0,1)=\St(f;x,1)$ for every $x\in Zp$.

Let $\fkf=\sum_{k=0}^d\fkf_kT^K$. Note that $\St(\fkf;0,p^{-s})=\ell$ means that for $k<\ell$, $|f_k|\leq |f_\ell|$; and for $k\geq \ell+1$, $|f_k|<|f_\ell|$. By conditioning on $|f_\ell|=p^{-a}$, we have that
\begin{multline}\label{eq:argumentstrassmanprob}
\bbP(\St(\fkf;0,1)=\ell)\\=\sum_{a=0}^\infty \bbP\left(|\fkf_0|\leq p^{-\ell},\ldots,|\fkf_{\ell-1}|\leq p^{-\ell},|f_\ell|=p^{-a},|\fkf_{\ell+1}|< p^{-\ell},\ldots,|\fkf_{d}|< p^{-\ell}\right)\\
=\sum_{a=0}^\infty \left(\prod_{k=0}^{\ell-1}\bbP(|\fkf_k|\leq p^{-a})\right)\bbP(|\fkf_\ell|=p^{-a})\left(\prod_{k=\ell+1}^{d}\bbP(|\fkf_k|< p^{-a})\right),   
\end{multline}
since the $\fkf_i$ are independent. Hence
\[
\bbP(\St(\fkf;0,1)=\ell)=\left(1-p^{-1}\right)\sum_{a=0}^\infty p^{\ell-d-a(d+1)}=\frac{\left(1-p^{-1}\right)}{\left(1-p^{-(d+1)}\right)}p^{\ell-d}.
\]
This proves the first equation.

For the final statement, we have that
\[
\bbE\St(\fkf;x,1)=\frac{\left(1-p^{-1}\right)}{\left(1-p^{-(d+1)}\right)}\sum_{\ell=0}^d \ell p^{-\ell-d},
\]
by the equality just proven. Here, an elementary computation gives the desired result.
\end{proof}
\begin{proof}[Proof of Theorem~\ref{theo:strassmanprob2}]
Since translating the variable by $x$ induces a $\mathrm{GL}$-transformation in the space of $p$-adic polynomials, $\fkf$ and $\fkf(x+T)$ have the same random structure. Thus we can assume, without loss of generality, that $x=0$.

If $\St(\fkf,x;p^{-s})\geq \ell$, then we have that for some $i\geq \ell$, we have that for all $j<\ell$, $|\fkf_j|p^{-sj}\leq |\fkf_i|p^{-si}$. Therefore
\begin{equation}
\bbP(\St(\fkf;0,p^{-s})=\ell)=\bbP(\exists i\geq \ell,\,\forall j<\ell,\, |\fkf_j|\leq |\fkf_i|p^{-s(i-j)})\leq \sum_{i=\ell}^d\bbP(\forall j<\ell,\, |\fkf_j|\leq |\fkf_i|p^{-s(i-j)})
\end{equation}
where the last inequality follows from the union bound. Now, conditioning on $|\fkf_i|=p^{-a}$, we have that
\begin{equation}
  \bbP(\forall j<\ell,\, |\fkf_j|\leq |\fkf_i|p^{-s(i-j)})=\sum_{a=0}^\infty  \bbP(\forall j<\ell,\, |\fkf_j|\leq p^{-a-s(i-j)},\,|\fkf_j|=p^{-a}) 
\end{equation}
where, by independence of the $\fkf_k$ and Proposition~\ref{prop:padicprobability},
\begin{multline}
    \bbP(\forall j<\ell,\, |\fkf_j|\leq p^{-a-s(i-j)},\,|\fkf_i|=p^{-a})=\bbP(|\fkf_i|=p^{-a})\prod_{j=0}^{\ell-1}\bbP(|\fkf_j|\leq p^{-a-s(i-j)})\\=\left(1-p^{-1}\right)p^{-(\ell+1)a-s\frac{\ell(2i-\ell+1)}{2}}.
\end{multline}
Hence
\begin{multline*}
\bbP(\St(\fkf;0,p^{-s})=\ell)\leq \left(1-p^{-1}\right)\sum_{i=\ell}^d\sum_{a=0}^\infty p^{-(\ell+1)a-s\frac{\ell(2i-\ell+1)}{2}}\\
=\frac{1-p^{-1}}{1-p^{-(\ell+1)}}\sum_{i=\ell}^d p^{-s\frac{\ell(2i-\ell+1)}{2}}\\
=\frac{\left(1-p^{-1}\right)\left(1-p^{-\frac{s(d-\ell)\ell}{2}}\right)}{\left(1-p^{-(\ell+1)}\right)\left(1-p^{-s\ell}\right)}p^{-s\ell^2+s\ell\frac{\ell-1}{2}}\\=\frac{\left(1-p^{-1}\right)\left(1-p^{-\frac{s(d-\ell)\ell}{2}}\right)}{\left(1-p^{-(\ell+1)}\right)\left(1-p^{-s\ell}\right)}p^{-\frac{s\ell^2+s\ell}{2}}
\end{multline*}
where the equalities are obtained doing geometric sums. Finally, we have that $p\geq 2$, $s\geq 1$ and $\ell\geq 1$, so
\[
\frac{\left(1-p^{-1}\right)\left(1-p^{-\frac{s(d-\ell)\ell}{2}}\right)}{\left(1-p^{-(\ell+1)}\right)\left(1-p^{-s\ell}\right)}\leq \frac{1-1/2}{(1-(1/2)^2)(1-1/2)}=\frac{4}{3}
\]
and the bound on the probability follows.

For the second part, we have that
\begin{equation}\label{eq:expaux0}
  \bbE\St(\fkf;0,p^{-s})^k=\int_{0}^\infty ku^{k-1}\bbP(\St(\fkf;0,p^{-s})\geq u)\,\mathrm{d}u.  
\end{equation}
since $\St(\fkf;0,p^{-s})$ is a positive random variable. 

Now,
\begin{multline}\label{eq:expaux1}
    \int_{0}^1 ku^{k-1}\bbP(\St(\fkf;0,p^{-s})\geq u)=\int_{0}^1 ku^{k-1}\bbP(\St(\fkf;0,p^{-s})\geq 1)\,\mathrm{d}u\\\leq \frac{4}{3}p^{-s}\int_0^1ku^{k-1}\,\mathrm{d}u=\frac{4}{3}p^{-s} 
\end{multline}
and we have that for all $u> 1$,
\[
\bbP(\St(\fkf;0,p^{-s})\geq u)\leq \frac{4}{3}p^{-\frac{su^2}{2}-s},
\]
since $\bbP(\St(\fkf;0,p^{-s})\geq u)=\bbP(\St(\fkf;0,p^{-s})\geq \lceil u\rceil)$. Thus we only have to bound 
\[
\int_{1}^\infty ku^{k-1}\bbP(\St(\fkf;0,p^{-s})\geq u)\,\mathrm{d}u\leq \frac{4}{3}\int_{1}^\infty ku^{k-1}p^{-\frac{su^2}{2}-\frac{\lceil u\rceil}{2}s}\,\mathrm{d}u\leq \frac{4}{3}p^{-s}\int_{1}^\infty ku^{k-1}p^{-\frac{su^2}{2}}\,\mathrm{d}u.
\]
Doing the change of variables $u=\sqrt{\frac{2v}{s\ln p}}$,
\[
\int_{1}^\infty ku^{k-1}p^{-\frac{su^2}{2}}\,\mathrm{d}u\leq \int_{0}^\infty ku^{k-1}p^{-\frac{su^2}{2}}\,\mathrm{d}u=\left(\frac{2}{s\ln p}\right)^{\frac{k}{2}}\Gamma\left(\frac{k+1}{2}\right),
\]
where, by Stirling's estimation~\cite[Eq.~(2.14)]{conditionbook},
\[
\Gamma\left(\frac{k+1}{2}\right)=\frac{k}{2}\Gamma\left(\frac{k}{2}\right)\leq \sqrt{2\pi}\left(\frac{k}{2}\right)^{\frac{k}{2}}\sqrt{\frac{k}{2}}\enumber^{-\frac{k}{2}+\frac{1}{6k}}\leq \frac{3}{2}\left(\frac{k}{2}\right)^{\frac{k}{2}}.
\]
Thus 
\begin{equation}\label{eq:expaux2}
\int_{1}^\infty ku^{k-1}\bbP(\St(\fkf;0,p^{-s})\geq u)\,\mathrm{d}u\leq 2p^{-s}\left(\frac{k}{s\ln p}\right)^{\frac{k}{2}}.
\end{equation}

Putting \eqref{eq:expaux1} and \eqref{eq:expaux2} back in \eqref{eq:expaux0}, we get the desired bound for the expectation.
\end{proof}
\begin{proof}[Proof of Corollary~\ref{cor:strassmanprob3}]
We have that $\bbE\sum_{n=0}^{p^s-1}\St(\fkf;n,p^{-s})^k\leq \sum_{n=0}^{p^s-1}\bbE\St(\fkf;n,p^{-s})^k$, so Theorem~\ref{theo:strassmanprob2} finishes the proof.
\end{proof}

\section{Complexity and precision analysis of \nameref{alg:strassman}}\label{sec:complexity}

We analyze \nameref{alg:strassman}. First, we analyze the algorithm assuming exact arithmetic operation, i.e., working in the BSS~\cite{BCSSbook}. Second, we provide a finite precision analysis in the flat model of \nameref{alg:strassman}. 

\subsection{Correctness}

We show that the algorithm terminates and it is correct as long as the condition number is finite.

\begin{theo}\label{theo:correctness}
Let $f\in\Qp[T]$ be a $p$-adic polynomial of degree $d$. If $\kappa(f)<\infty$, then \nameref{alg:strassman} terminates and it is correct.
\end{theo}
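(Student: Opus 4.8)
The plan is to establish termination and correctness separately, treating the algorithm as a traversal of the Strassman tree $\mcT_p(f)$. The key structural fact is that at each node $(g; x, p^{-s}; \ell)$ the local data $g$ is a normalization and truncation of $f(x + p^s T)$, so that $\St(g) = \St(f; x, p^{-s})$ by Theorem~\ref{theo:strassmannexactforcomplex} (the truncation at degree $\ell$ does not change the Strassman count because, by Proposition~\ref{prop:strassmansubadditive}, later counts never exceed earlier ones, and the higher-degree coefficients are irrelevant to the count). First I would verify the invariant that is maintained along any path: the balls $\overline{B}(x, p^{-s})$ appearing in $\mcS$ are always among those that contain at least one root of $f$ in $\Cp$ of norm $\le 1$, and every root of $f$ in $\Zp$ lies in exactly one ball being processed or already placed in $\mcZ$. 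This follows by induction using subadditivity (Proposition~\ref{prop:strassmansubadditive}) and the fact that subdividing $\overline{B}(x, p^{-s})$ into the $p$ sub-balls of radius $p^{-(s+1)}$ only keeps those with $\St \ge 1$, i.e. those the subdivision step detects via $|g(a_i)| < 1$.

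For correctness, once termination is granted, I would argue: (i) a ball is added to $\mcZ$ only when $\ell' = \St(f; x', p^{-(s+1)}) = 1$, and then by Theorem~\ref{theo:strassmannexactforcomplex} it contains exactly one root of $f$ in $\Cp$ with norm $\le 1$, hence at most one in $\Zp$; (ii) by Proposition~\ref{prop:strassmantoalpha}, $\St(f; x', p^{-(s+1)}) = 1$ implies $\alpha(f, x') < 1$, so Newton's iteration from $x'$ converges quadratically to the unique root $\zeta$ in $\overline{B}(x', p^{-(s+1)})$ — which is in $\Zp$ since $x' \in \Zp$ and $\Zp$ is complete — giving the last postcondition; (iii) no root of $f$ in $\Zp$ is ever discarded, because a sub-ball is discarded only when $\St = 0$, which by Theorem~\ref{theo:strassmannexactforcomplex} means no root of $f$ in $\Cp$ (a fortiori in $\Zp$) lies in it; and the disjointness of the output balls is immediate from the disjointness of the $p$ sub-balls at each subdivision. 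Combining (i)--(iii) with the path invariant gives $\mcZ(f, \Zp) \subset \bigcup_i \overline{B}(x_i, p^{-s_i})$ and the single-root property.

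The main obstacle is termination, and this is where the hypothesis $\kappa(f) < \infty$ enters. The idea is that along any root-path in $\mcT_p(f)$ the radius $p^{-s}$ strictly decreases at each step, so I must show the depth is bounded. Since $f$ has no singular roots in $\Zp$, for any ball $\overline{B}(x, p^{-s})$ in $\mcS$ that still has $\St \ge 2$, pick the corresponding center $x$; by Proposition~\ref{prop:conditionminimum}, as soon as $s \ge 1 + \log_p \kappa(f, x) $, and a fortiori once $s \ge 1 + \log_p \kappa(f)$, we have $\St(f; x, p^{-s}) \le 1$, so the node is a leaf. Hence every path has length at most $2 + \lceil \log_p \kappa(f) \rceil < \infty$, which is finite precisely because $\kappa(f) < \infty$ (equivalently, by Theorem~\ref{theo:conditionproperties}(4), because $f \notin \Sigma$). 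The tree is finitely branching (at most $p$ children per node) and of finite depth, hence finite by König's lemma, so the \textbf{Repeat} loop processes finitely many nodes and terminates. The one subtlety to check carefully is that the center $x$ of a node in the \emph{tree} is the same point at which we must apply Proposition~\ref{prop:conditionminimum}: the normalization step rescales $g$ by a unit times a power of $p$, which does not affect $\St$, and the translation bookkeeping $x' \leftarrow x + a_i p^s$ keeps $x'$ as the genuine center of the corresponding ball, so the condition-number argument applies verbatim at each node.
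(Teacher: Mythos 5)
Your argument is correct and follows essentially the same route as the paper: termination from Proposition~\ref{prop:conditionminimum} via $\kappa(f)<\infty$, correctness from the trichotomy $\St\in\{0,1,\geq 2\}$ at each subdivision with Proposition~\ref{prop:strassmansubadditive} ensuring the truncation is harmless, and Proposition~\ref{prop:strassmantoalpha} for the Newton postcondition. The only difference is that you spell out the path invariant and the tree-finiteness argument in detail where the paper states them tersely.
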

\begin{proof}
Since $\kappa(f)$ is finite, the algorithm must terminate by Proposition~\ref{prop:conditionminimum}. The algorithm is correct, because we select precisely the $B(x,p^{-s})$ for which $\St(f;x,p^{-s})=1$, we discard those $B(x,p^{-s})$ for which $\St(f;x,p^{-s})=0$ and subdivide the rest. This is not affected by the truncation done at each step, due to Proposition~\ref{prop:strassmansubadditive} which guarantees that Strassman count will only go down. Finally, Proposition~\ref{prop:strassmantoalpha} shows that the obtained approximations satisfy the desired properties.
\end{proof}

\subsection{Arithmetic complexity analysis}

The following theorem provides an arithmetic complexity analysis of the algorithm that is input-dependent.

\begin{theo}\label{theo:arithmeticcomplexity}
Let $f\in\Qp[T]$ be a $p$-adic polynomial of degree $d$. Then:
\begin{enumerate}
\item[(d)] The depth of Strassman tree $\mcT_p(f)$ is bounded by
\[
\frac{\log\kappa(f)}{\log p}+1.
\]
\item[(w)] The width of Strassman tree $\mcT_p(f)$ is bounded by
\[
\sum_{n=0}^{p-1}\St(f;n,p^{-1}).
\]
\item[(c1)] The number of arithmetic operations of lines 1--4 is at most
$
\Oh(d).
$
\item[(c2)] The first iteration of the subdivision loop (lines 5--18) of \nameref{alg:strassman} has a deterministic cost of
$\Oh(dp+\sum_{n=0}^{p-1}\St(f;x,p^{-1})^2)$
or an average cost of
$\Oh(d^2\log^3 d\log p)$.
\item[(c3)] All the iterations after the first of the subdivision loop (lines 5--18) of \nameref{alg:strassman} have a deterministic cost of 
$\Oh(\max_{0\leq n\leq p}\St(f;n,p^{-1})^3p)$
and an average cost of
$\Oh(\max_{0\leq n\leq p}\St(f;n,p^{-1})^3\log p)$.
\end{enumerate}
\end{theo}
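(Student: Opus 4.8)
The plan is to treat the five items essentially independently, since each one isolates a structural feature of the Strassman tree $\mcT_p(f)$ or of a block of lines in \nameref{alg:strassman}. For (d), the depth bound, I would argue that once we are at a ball $\overline{B}(x,p^{-s})$ with $s \geq 1 + \log\kappa(f)/\log p$, Proposition~\ref{prop:conditionminimum} guarantees $\St(f;x,p^{-s}) \leq 1$, so such a ball is never added to $\mcS$ for further subdivision; hence the deepest vertex in $\mcT_p(f)$ has $s$ at most $\lceil \log\kappa(f)/\log p\rceil + 1$, and one checks the off-by-one bookkeeping against how $s$ increments (by $1$ each level, starting from $s=0$) to get the stated $\log\kappa(f)/\log p + 1$. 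For (w), the width bound, the key observation is the subadditivity of the Strassman count (Proposition~\ref{prop:strassmansubadditive}): every ball at depth one is one of the $p$ balls $\overline{B}(n,p^{-1})$ for $n \in \{0,\dots,p-1\}$, and by subadditivity the total number of leaves (isolating balls) of $\mcT_p(f)$ — equivalently the total number of roots isolated, which upper-bounds the width — is at most $\sum_{n=0}^{p-1}\St(f;n,p^{-1})$; one must also note that the width at any deeper level cannot exceed the number of roots surviving, which is again controlled by this sum via subadditivity down the tree.

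For the cost items I would set up a small cost model first: a single truncated Taylor shift $f \mapsto f(x' + p^{s+1}T)$ truncated to degree $\ell$ costs $\Oh(\ell^2)$ arithmetic operations in $\Zp$ (or $\Oh(\ell \log^2 \ell)$ with fast arithmetic, but the crude bound suffices here), computing $\St(h)$ from the coefficient valuations costs $\Oh(\ell)$, and line~7 — finding the $a_i \in \{0,\dots,p-1\}$ with $|g(a_i)| < 1$ — costs $\Oh(dp)$ deterministically by brute force over $\bbF_p$ or $\Oh(d^2\log^3 d\log p)$ on average via $\gcd(g, T^p - T) \bmod p$ followed by Cantor--Zassenhaus, exactly as spelled out in Remark~\ref{rem:line7}. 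With this in hand, (c1) is immediate: lines 1--4 do one Strassman count, one normalization/truncation, and some list initialization, all $\Oh(d)$. For (c2), the first iteration operates on the root node with $\ell = \ell_{\mathrm{in}} \leq d$: line~7 contributes the $\Oh(dp)$ or $\Oh(d^2\log^3 d\log p)$ term, and the \textbf{for} loop runs over $l \leq \sum_{n=0}^{p-1}\St(f;n,p^{-1})$ children, each child costing a truncated shift of cost $\Oh(\ell_{\mathrm{in}}^2)$; bounding $\ell_{\mathrm{in}}$ by $\St(f;n,p^{-1})$-type quantities (or just by $d$) and summing gives the two stated bounds — the average one absorbing the per-child cost into the dominant Cantor--Zassenhaus term. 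For (c3), every iteration after the first works at depth $\geq 1$, where the working polynomial has degree $\ell \leq \max_{0\leq n \leq p}\St(f;n,p^{-1})$; by (d) and (w) the number of such iterations is $\Oh\big(\frac{\log\kappa(f)}{\log p}\cdot \max_n \St(f;n,p^{-1})\big)$, but more simply the total work across all deeper iterations, summed via subadditivity, is a constant number of truncated shifts per node each of cost $\Oh((\max_n \St)^2)$ plus a line~7 cost of $\Oh((\max_n \St)p)$ deterministically or $\Oh((\max_n \St)^2\log^3(\max_n\St)\log p)$ on average, and collecting these over all $\Oh(\max_n \St)$-many deeper nodes yields the $\Oh((\max_n \St)^3 p)$ and $\Oh((\max_n \St)^3 \log p)$ bounds.

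The main obstacle I anticipate is the careful bookkeeping in (c3): separating cleanly the ``first iteration'' (which touches degree-$d$ data and the large $\ell_{\mathrm{in}}$) from ``all later iterations'' (which only touch the much smaller degree $\max_n \St(f;n,p^{-1})$), and then correctly charging the line~7 cost at each deeper node. One has to be careful that at depth $\geq 1$ the polynomial fed to line~7 has degree at most $\max_n \St(f;n,p^{-1})$ rather than $d$ — this is precisely what makes the $p$-dependence downgrade from $dp$ to $(\max_n \St)\cdot p$ and is what the truncation steps in the algorithm buy us — and that the number of deeper nodes is itself controlled by the same quantity via the width bound (w), so that the product comes out as the cube rather than something worse. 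The depth bound (d) is needed to guarantee finitely many iterations, but the width bound is what actually controls the count of nodes; reconciling these two so that no factor of $\log\kappa(f)/\log p$ leaks into the final (c3) estimate (it gets absorbed because at each fixed depth the total Strassman count across siblings is $\leq \max_n \St$, and summing a geometrically-controlled tree of bounded total leaf-count gives only a constant blow-up) is the delicate point.
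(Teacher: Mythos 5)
Your treatment of (d), (w), (c1) and (c2) matches the paper's own (very terse) proof: (d) is Proposition~\ref{prop:conditionminimum}, (w) is the subadditivity of Proposition~\ref{prop:strassmansubadditive} applied at each depth, (c1) is immediate, and (c2) is Remark~\ref{rem:line7} applied to a degree-$d$ polynomial plus the cost of the truncated shifts for the children. The per-node cost model you set up is also the right one.

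Where you go wrong is in (c3), and it is a misreading of what the item asserts rather than a flaw in the cost accounting. In the paper, (c3) is a \emph{per-iteration} bound: a single pass through lines 5--18 for a node $(g;x,p^{-s};\ell)$ with $s\ge 1$, whose polynomial has degree $\ell\le m:=\max_{0\le n<p}\St(f;n,p^{-1})$ by Proposition~\ref{prop:strassmansubadditive}, costs $\Oh(mp)$ (det) or $\Oh(m^2\log^3 m\log p)$ (avg) for line~7 plus at most $m$ truncated shifts at $\Oh(m^2)$ each, which is $\Oh(mp+m^3)\le\Oh(m^3p)$ det and $\Oh(m^3\log p)$ avg. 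You instead read (c3) as a \emph{total} over all deeper iterations and then multiply a per-node cost by ``$\Oh(\max_n\St)$-many deeper nodes.'' This node count is not what (d) and (w) give you --- the number of deeper nodes is bounded by depth times width, i.e.\ roughly $\bigl(\tfrac{\log\kappa(f)}{\log p}+1\bigr)\sum_n\St(f;n,p^{-1})$, which is in general much larger than $\max_n\St$. Your closing worry about ``reconciling (d) and (w) so that no $\log\kappa(f)/\log p$ factor leaks into (c3)'' therefore has a simple answer that your argument misses: the depth factor is \emph{supposed} to appear, but only in Corollary~\ref{cor:arithmeticcomplexity}, where the per-iteration bound (c3) is multiplied by the node count from (d) and (w); (c3) itself carries no depth dependence because it speaks of one iteration. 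Once you read (c3) per-iteration, your cost analysis already proves it, and the aggregation step belongs to the corollary, not the theorem.
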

\begin{cor}\label{cor:arithmeticcomplexity}
Let $f\in\Qp[T]$ be a $p$-adic polynomial of degree $d$. Then \nameref{alg:strassman} takes
\[
\Oh\left(d^2\log^3 d\log p+\log\kappa(f)\left(\sum_{n=0}^{p-1}\St(f;n,p^{-1})\right)\max_{0\leq n\leq p}\St(f;n,p^{-1})^3\right)
\]
arithmetic operation on $f$ on average, if $p> d$; and
\[
\Oh\left(dp+\sum_{n=0}^{p-1}\St(f;x,p^{-1})^2+\log\kappa(f)\left(\sum_{n=0}^{p-1}\St(f;n,p^{-1})\right)\max_{0\leq n\leq p}\St(f;n,p^{-1})^3p\right)
\]
arithmetic operations on $f$, if $p<d$.
\end{cor}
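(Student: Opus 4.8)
The plan is to read off the corollary from Theorem~\ref{theo:arithmeticcomplexity} by a short accounting argument, with no new ideas needed. Write the total number of arithmetic operations performed by \nameref{alg:strassman}$(f)$ as the cost of lines~1--4, plus the cost of the first pass through the subdivision loop, plus the cost of all later passes. By item~(c1) of Theorem~\ref{theo:arithmeticcomplexity} the first summand is $\Oh(d)$, which is absorbed both by $\Oh(d^2\log^3 d\log p)$ and by $\Oh(dp)$; by item~(c2) the second summand is $\Oh(dp+\sum_{n}\St(f;n,p^{-1})^2)$ in the deterministic regime and $\Oh(d^2\log^3 d\log p)$ on the average.

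For the later passes, I would observe that each of them removes from $\mcS$ a triple $(g;x,p^{-s};\ell)$ attached to a vertex of the Strassman tree $\mcT_p(f)$ other than its root, so the number of later passes is at most $(\text{depth of }\mcT_p(f))\cdot(\text{width of }\mcT_p(f))$, hence at most $\big(\tfrac{\log\kappa(f)}{\log p}+1\big)\big(\sum_{n=0}^{p-1}\St(f;n,p^{-1})\big)$ by items~(d) and~(w). Each later pass costs $\Oh(\max_{n}\St(f;n,p^{-1})^3\,p)$ deterministically and $\Oh(\max_{n}\St(f;n,p^{-1})^3\log p)$ on the average by item~(c3) (the level-$1$ quantity $\max_{n}\St(f;n,p^{-1})$ appears here because, by the subadditivity of the Strassman count in Proposition~\ref{prop:strassmansubadditive}, it dominates every Strassman count occurring at a deeper level of $\mcT_p(f)$). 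Multiplying the bound on the number of later passes by the per-pass cost bounds their total cost.

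It then only remains to absorb the factor $\tfrac{\log\kappa(f)}{\log p}+1$, and the surviving $\log p$ in the average regime, into $\log\kappa(f)$. For this I would use that $\kappa(f)$ is a power of $p$ (as already invoked in the proof of Corollary~\ref{cor:probcondition}). If $\kappa(f)=1$, then by Proposition~\ref{prop:conditionminimum} we have $\St(f;x,p^{-1})\leq1$ for every $x$, so $\mcT_p(f)$ consists of its root alone, there are no later passes, and the corollary already follows from (c1) and (c2). Otherwise $\kappa(f)\geq p$, so $\tfrac{\log\kappa(f)}{\log p}+1\leq\tfrac{2\log\kappa(f)}{\log p}$ and $\log p\leq\log\kappa(f)$, whence $\big(\tfrac{\log\kappa(f)}{\log p}+1\big)\log p\leq2\log\kappa(f)$; thus the later passes cost $\Oh\big(\log\kappa(f)\,(\sum_{n=0}^{p-1}\St(f;n,p^{-1}))\,\max_{n}\St(f;n,p^{-1})^3\big)$ on the average and $p$ times that deterministically. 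Summing the three contributions and keeping the dominant first-pass term ($\Oh(d^2\log^3 d\log p)$ when $p>d$, and $\Oh(dp+\sum_{n}\St(f;n,p^{-1})^2)$ when $p<d$) yields the two displayed bounds. I do not expect a genuine difficulty here; the only point requiring care is this last absorption, which is precisely where one uses that $\kappa(f)\in p^{\bbN}$, so that the existence of even one later pass already forces $\kappa(f)\geq p$.
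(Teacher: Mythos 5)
Your proposal is correct and follows essentially the same route as the paper's proof: the paper also decomposes the cost into lines~1--4, the first pass, and the later passes, bounds the number of later passes by (depth)$\cdot$(width) via items~(d) and~(w), and applies (c1)--(c3). You are somewhat more careful than the paper's terse proof on two points that it glosses over — you explicitly absorb the extra $\log p$ from the average-case per-pass cost into $\log\kappa(f)$, and you isolate the boundary case $\kappa(f)=1$ (where $\log\kappa(f)=0$ but the depth bound from~(d) is still $1$) by noting that Proposition~\ref{prop:conditionminimum} then forces $\St(f;x,p^{-1})\leq1$ for all $x$, so there are in fact no later passes. Both refinements are sound and use exactly the fact you invoke, namely $\kappa(f)\in p^{\bbN}$.
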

\begin{proof}[Proof of Theorem~\ref{theo:arithmeticcomplexity}]
(d) This follows from Proposition~\ref{prop:conditionminimum}.

(w) At height $s\geq 1$ of $\mcT_p(f)$, we have that the width is bounded by
\[
\#\{n\in\{0,\ldots,p^s\}\mid \St(f;n,p^{-s})>0\}\leq \sum_{n=0}^{p^s-1}\St(f;n,p^{-s})\leq \sum_{n=0}^{p-1}\St(f;n,p^{-1})
\]
where the last inequality follows from Proposition~\ref{prop:strassmansubadditive}.

(c1) This is immediate.

(c2) This follows from Remark~\ref{rem:line7}.

(c3) This follows from Remark~\ref{rem:line7} and the observation that after the first iteration of the subdivision loop, the degree of the polynomials is bounded by $\max_{0\leq i\leq p-1}\St(f;n,p^{-1})$ due to Proposition~\ref{prop:strassmansubadditive}.
\end{proof}
\begin{proof}[Proof of Corollary~\ref{theo:arithmeticcomplexity}]
Note that the first iteration of the loop comes from (c1) and (c2). Once this is done, there are $\frac{\log\kappa(f)}{\log p}\sum_{n=0}^{p-1}\St(f;n,p^{-1})$ node left in the Strassman tree, by (d) and (W), whose cost is given by $(c3)$.
\end{proof}

\subsection{Precision analysis}

For our precision analysis, we will be using a \emph{flat precision model}: we will write all the $p$-adic numbers with $b$ digits of absolute precision, i.e., in the form
\[
a_0+a_1p+a_2p^2+\ldots+a_{b-1}p^{b-1}+O(p^b).
\]
Note that this is the same as projecting $\Zp$ onto $\bbZ/p^b\bbZ$. Thus we can guarantee that the $b$ digits of precision are preserve for addition, multiplication and division by units of $\Zp$. However, precision might lost when we divide by elements of the form
\[
p^ku+O(p^b),
\]
with $u\in\Zp$ a unit. More precisely, we will loss $k$ digits of precision. The following theorem estimates the precision need to guarantee that \nameref{alg:strassman} runs properly.

\begin{theo}\label{theo:bestprecision}
Let $f\in\Zp[T]$ be a $p$-adic integer polynomial of degree $d$. Then we need $1-\frac{\log\|f\|}{\log p}$ $p$-adic digits of precision at the beginning, and $1-\frac{\log\|f\|}{\log p}+\ell s$ digits of $p$-adic precision for computing the descendants $(h;x',p^{-(s+1)};\ell')$ of each appearing $(g;x,p^{-s};\ell)$ (line 10) to guarantee that the output of \nameref{alg:strassman} at $f$ is correct.\eproof
\end{theo}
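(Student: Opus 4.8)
The plan is to track, step by step, the $p$-adic valuations of all quantities that get divided during the execution of \nameref{alg:strassman}, since by the description of the flat precision model the only source of precision loss is division by a non-unit, and dividing by $p^k u + O(p^b)$ costs exactly $k$ digits. First I would isolate the divisions that actually occur: they are (i) the normalization steps, where we divide by $f_{\ell_{\mathrm{in}}}$ (line~2) and by $h_{\ell'}$ (line~10 of the inner block), and (ii) the Newton iterations used afterwards to refine the approximations, but by Proposition~\ref{prop:strassmantoalpha} those are governed separately and produce no net loss beyond what is already recorded. So the precision bookkeeping reduces to controlling $|f_{\ell_{\mathrm{in}}}|$ and, for each node $(g;x,p^{-s};\ell)$ in the Strassman tree, the valuation of the leading-after-truncation coefficient $h_{\ell'}$ of $h = f(x'+p^{s+1}T)$ truncated at degree $\ell$.

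The key observation is that, by definition of $\St$, the coefficient $h_{\ell'}$ has the \emph{largest} absolute value among the first $\ell'+1$ coefficients of $h$; in particular $|h_{\ell'}| \geq |h_0| = |f(x')|$, and more usefully $|h_{\ell'}| \geq |h_j|$ for all $j \le \ell'$. Writing $h_j = f^{(j)}(x')/j! \cdot p^{(s+1)j}$ — the coefficients of $f(x'+p^{s+1}T)$ — and using Proposition~\ref{prop:norminequalities}(i) (translation-invariance of $\|~\|$, so after a translation by $x'\in\Zp$ the norm is still $\|f\|$) together with part~(e), one gets $|h_j| \le \|f\| \, p^{-(s+1)j}$. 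Hence $|h_{\ell'}|$ could in principle be as small as $p^{-(s+1)\ell'}\|f\|$, but this crude bound is too weak; instead I would use subadditivity (Proposition~\ref{prop:strassmansubadditive}) which forces $\ell' \le \ell$, and the structure of the Newton polygon (Theorem~\ref{theo:strassmannexactforcomplex}) to see that the valuation drop accumulated from the root node down to depth $s$ is controlled by $\ell s$ rather than $d s$. Concretely, at a node at depth $s$ with current count $\ell$, the leading coefficient satisfies $|h_{\ell'}| \ge p^{-\ell s}\|f\|$ (up to the shift by the global normalization $\|f\|$), so dividing by it costs at most $\ell s - \log_p\|f\|$ digits; adding the $1$ that comes from the fact that a non-unit in $\Zp$ already has valuation $\ge 1$ gives the stated $1 - \frac{\log\|f\|}{\log p} + \ell s$.

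The main obstacle I anticipate is making the bound $|h_{\ell'}| \ge p^{-\ell s}\|f\|$ rigorous and tight: one has to argue that the Strassman count $\ell$ of the \emph{parent} node genuinely bounds the number of slopes of the Newton polygon that can be ``active'' inside the ball $\overline{B}(x,p^{-s})$, and therefore bounds how far the valuation of the relevant leading coefficient can have descended relative to $\|f\|$ after $s$ nested subdivisions. This is where Theorem~\ref{theo:strassmannexactforcomplex} (the Newton-polygon interpretation of $\St$) does the real work: the total horizontal length of slopes $\le -s$ of $f$ restricted to the ball is exactly $\St(f;x,p^{-s})=\ell$, and the valuation of the leading truncated coefficient is the corresponding vertical drop, which is at most $\ell s$. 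A secondary, more routine obstacle is checking that the truncation performed at each step (discarding coefficients of degree $> \ell$) does not interfere: since those discarded coefficients have absolute value strictly smaller than $|h_{\ell'}|$ by definition of $\St$ and the guarantee of Proposition~\ref{prop:strassmansubadditive}, they contribute nothing, and the claimed precision is preserved. The ``$p$-adic digits of precision at the beginning'' figure, $1 - \frac{\log\|f\|}{\log p}$, is then just the $s=0$ case: normalizing by $f_{\ell_{\mathrm{in}}}$ costs $-\log_p|f_{\ell_{\mathrm{in}}}| \le -\log_p\|f\|$ digits, plus the extra $1$ for the non-unit bound.
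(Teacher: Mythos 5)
You follow the same strategy as the paper: in the flat precision model the only source of loss is division, the only divisions in \nameref{alg:strassman} are the normalizations (and the initial read-off of $\|f\|$), and so the analysis reduces to lower-bounding $|h_{\ell'}|$ where $h = f(x'+p^{s+1}T)$ is truncated at degree~$\ell$. The paper's own proof is a single sentence asserting that the $p$-power factors account for at most $s\ell$ lost digits; you make the same quantitative claim, $|h_{\ell'}| \ge p^{-\ell s}\|f\|$, correctly invoke subadditivity to get $\ell' \le \ell$, and then try to back the claim with a Newton-polygon argument. So the route is essentially the paper's, just more spelled out.

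The Newton-polygon step, however, has the key inequality reversed. You argue that since the horizontal length of the slopes $\le -s$ is $\ell = \St(f;x,p^{-s})$, the corresponding ``vertical drop'' is at most $\ell s$. But slopes bounded above by $-s$ over a horizontal stretch of length $\ell$ force a vertical drop of \emph{at least} $\ell s$; such slopes can be arbitrarily steep, so $\ell$ and $s$ alone give no upper bound on the drop, hence no lower bound on $|h_{\ell'}|$. Indeed the bound fails concretely: take $p=5$ and $f = 25 + T^2\in\Zp[T]$, so $\|f\|=1$. After one subdivision the algorithm reaches the node $(1+T^2;\,0,\,5^{-1};\,2)$, i.e.\ $s=1$, $\ell=2$; choosing $x'=10$ gives $h = 125 + 500T + 625T^2$, so $\ell'=1$ and $|h_{\ell'}| = 5^{-3}$, which is strictly smaller than $p^{-\ell s}\|f\| = 5^{-2}$. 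So the lower bound your plan hinges on is simply not true, and it cannot be obtained from the Newton polygon in the direction you indicate. (To your credit, you flagged this as the main obstacle, and the paper's one-sentence proof does not close the gap either --- the coefficients of $f(x'+p^{s+1}T)$ carry $p^{(s+1)k}$-factors, not $p^{sk}$-factors --- so you have put your finger on a genuinely soft spot; but the proposed repair does not work.)
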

\begin{proof}
To compute $\|f\|$ we need at least $1-\frac{\log\|f\|}{\log p}$ digits of precision. Now, at each step, we need to compute $\|f(x'+p^{s+1}T)\|$ where $f(x'+p^{s+1}T)$ is truncated to degree $\ell$, where the $p^s$ make us loss at most $s\ell$ digits of precision. Because of this, we need $s\ell$ extra digits of $p$-adic precision.
\end{proof}

\begin{cor}\label{cor:estimateprecision}
Let $f\in\Zp[T]$ be a $p$-adic integer polynomial of degree $d$. Then \nameref{alg:strassman} needs at most
\[1-\frac{\log\|f\|}{\log p}+\max\left\{d,\left(1+\frac{\log\kappa(f)}{\log p}\right)\max_{1\leq i\leq p-1}\St(f;n,p^{-1})\right\}\]
$p$-adic digits to guarentee correctness for $f$.
\end{cor}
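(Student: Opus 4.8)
The plan is to feed the per-step precision estimate of Theorem~\ref{theo:bestprecision} into the structure of the Strassman tree $\mcT_p(f)$, combining it with the depth bound behind Proposition~\ref{prop:conditionminimum} and with the subadditivity of the Strassman count (Proposition~\ref{prop:strassmansubadditive}). Theorem~\ref{theo:bestprecision} tells us that $1-\frac{\log\|f\|}{\log p}$ digits suffice for the initial preparation, and that the subdivision step carried out at a triple $(g;x,p^{-s};\ell)$ that enters $\mcS$ (line~10) costs $1-\frac{\log\|f\|}{\log p}$ digits plus an overhead bounded by the product of the count $\ell$ and the depth of the radius-$p^{-(s+1)}$ balls it produces, i.e., by essentially $\ell(s+1)$. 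Hence \nameref{alg:strassman} is correct on $f$ as soon as we carry $1-\frac{\log\|f\|}{\log p}$ digits plus the maximum of that overhead over all nodes of $\mcT_p(f)$, and the whole problem reduces to bounding this maximum.

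I would bound it by splitting the nodes of $\mcT_p(f)$ into the root $(f_{\mathrm{in}};0,1;\ell_{\mathrm{in}})$ and the rest. The root has count $\ell_{\mathrm{in}}=\St(f;0,1)=\St(f)\le d$ (immediate from the definition of $\St$ in Theorem~\ref{theo:strassmanbound}, since $f$ has degree $d$) and only spawns radius-$p^{-1}$ balls, so its overhead is at most $d$. Now take any other node $(g;x,p^{-s};\ell)$; it was put into $\mcS$ as a triple with count exceeding $1$, so $\ell=\St(f;x,p^{-s})\ge 2$ and in particular $s\ge 1$. By the contrapositive of Proposition~\ref{prop:conditionminimum}, $\ell\ge 2$ forces $s<1+\frac{\log\kappa(f,x)}{\log p}\le 1+\frac{\log\kappa(f)}{\log p}$; and since $s$ is an integer and $\kappa(f)\in p^{\bbN}$, this sharpens to $s+1\le 1+\frac{\log\kappa(f)}{\log p}$. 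On the other hand $\overline{B}(x,p^{-s})$ is contained in exactly one ball $\overline{B}(n,p^{-1})$ with $n\in\{0,\dots,p-1\}$, so Proposition~\ref{prop:strassmansubadditive} gives $\ell=\St(f;x,p^{-s})\le\St(f;n,p^{-1})\le\max_{0\le n\le p-1}\St(f;n,p^{-1})$. Multiplying the two bounds, the overhead of every non-root node is at most $\bigl(1+\tfrac{\log\kappa(f)}{\log p}\bigr)\max_{0\le n\le p-1}\St(f;n,p^{-1})$.

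Taking the maximum of the root bound $d$ and the non-root bound, and adding back $1-\frac{\log\|f\|}{\log p}$, yields exactly the asserted estimate. I expect the only genuinely delicate point to be the depth bound for the non-root nodes: Proposition~\ref{prop:conditionminimum} is only a \emph{sufficient} condition for $\St\le 1$, so one has to be sure it is invoked precisely at the triples that \nameref{alg:strassman} subdivides (those carry $\St\ge 2$), and then exploit integrality of $\tfrac{\log\kappa(f)}{\log p}$ so that the passage from $s<1+\tfrac{\log\kappa(f)}{\log p}$ to $s+1\le 1+\tfrac{\log\kappa(f)}{\log p}$ introduces no spurious additive constant. Everything else is routine bookkeeping with the ultranorm $\|\cdot\|$ and the flat precision model.
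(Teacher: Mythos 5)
Your proof is correct and follows essentially the same route as the paper: feed the per-node precision bound of Theorem~\ref{theo:bestprecision} into the Strassman tree, bound the root's overhead by $d$ via $\ell_{\mathrm{in}}\le d$, and bound the non-root overhead via the contrapositive of Proposition~\ref{prop:conditionminimum} together with the subadditivity of Proposition~\ref{prop:strassmansubadditive}. You are in fact somewhat more careful than the paper's terse two-line proof, making explicit the integrality step that turns $s<1+\tfrac{\log\kappa(f)}{\log p}$ into $s+1\le 1+\tfrac{\log\kappa(f)}{\log p}$ and consistently using the overhead $\ell(s+1)$ (which is what the proof of Theorem~\ref{theo:bestprecision} actually supports, despite its statement reading $\ell s$), so that your argument lands exactly on the asserted $\bigl(1+\tfrac{\log\kappa(f)}{\log p}\bigr)\max_{n}\St(f;n,p^{-1})$ factor.
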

\begin{proof}
By Theorem~\ref{theo:bestprecision}, the first step, requires
\[d+1-\frac{\log\|f\|}{\log p}\]
$p$-adic digits, and the $s$th successive subdivision step requires
\[1-\frac{\log\|f\|}{\log p}+s\max_{1\leq i\leq p-1}\St(f;n,p^{-1})\]
$p$-adic digits, since after the first steps all polynomials involve have degree bounded by $\max_{1\leq i\leq p-1}\St(f;n,p^{-1})$, by Proposition~\ref{prop:strassmansubadditive}. Since $s\leq \frac{\log\kappa(f)}{\log p}$, by Proposition~\ref{prop:conditionminimum}, the proof concludes.
\end{proof}

\subsection{Probabilistic complexity}

We can now prove Theorem~\ref{theo:maintheorem}.

\begin{proof}[Proof of Theorem~\ref{theo:maintheorem}]
We only need to combine Theorem~\ref{theo:correctness} with Corollaries~\ref{cor:arithmeticcomplexity} and~\ref{cor:estimateprecision}, and apply to them the probabilistic results of Section~\ref{sec:random}: Proposition~\ref{prop:randompadicpolynomial}, and Corollaries~\ref{cor:probcondition} and~\ref{cor:strassmanprob3}. In order to bound expression of the form
\[
\bbE\kappa(\fkf)^a\left(\sum_{n=0}^{p^s}\St(\fkf;x,p^{-s})^b\right),
\]
we use Cauchy-Schwarz and Jensen inequalities to obtain:
\[
\bbE\ln^a\kappa(\fkf)\left(\sum_{n=0}^{p^s}\St(\fkf;x,p^{-s})^b\right)\leq \sqrt{\bbE\ln^{2a}\kappa(\fkf)}\sqrt{\bbE\sum_{n=0}^{p^s}\St(\fkf;x,p^{-s})^{2b}}.
\]
\end{proof}

\bibliographystyle{plain}
{\small 
\bibliography{biblio}
}

\appendix

\section[Smale's alpha-theory in the ultrametric setting]{Smale's $\alpha$-theory in the ultrametric setting}\label{sec:smalealphatheory}

Smale's $\alpha$-theory guarantees quadratic convergence from an initial point. In this appendix, we develop Smale's $\alpha$-theory in the ultrametric setting. The results here extends the results of Breiding~\cite{breiding2013}, where Smale's $\gamma$-theorem was extend to the ultrametric setting. We follow the development of Dedieu~\cite{dedieubook}, but taking advantage of the ultranorms.

In what follows, $\bbF$ is a non-archimedian complete field of characteristic zero\footnote{Everything here holds if the characteristic is larger than all the degree involved, but we do not aim for general statements.} with (ultrametric) absolute value $|~|$ and $\Pd[n]$ the set of polynomial maps 
\[f:\bbF^n\rightarrow \bbF^n\]
where $f_i$ is of degree $d_i$. In this setting, we will consider on $\bbF^n$ the ultranorm given by
\[
\|x\|:=\max\{x_1,\ldots,x_n\},
\]
its associated distance
\[
\dist(x,y):=\|x-y\|,
\]
and on $k$-multilinear maps $A:(\bbF^n)^k\rightarrow \bbF^q$ the induced ultranorm, which is given by
\begin{equation}
    \|A\|:=\sup_{v_1,\ldots,v_k\neq 0}\frac{\|A(v_1,\ldots,v_k)}{\|v_1\|\cdots\|v_k\|}.
\end{equation}
With these definitions, we can define Smale's parameters.

\begin{defi}[Smale's parameters]
Let $f\in\Pd[n]$ and $x\in\bbF^n$. We define the following:
\begin{enumerate}[(a)]
    \item \emph{Smale's $\alpha$}: $\alpha(f,x):=\beta(f,x)\gamma(f,x)$, if $\diff_x f$ is non-singular, and $\alpha(f,x):=\infty$, otherwise.
    \item \emph{Smale's $\beta$}: $\beta(f,x):=\|\diff_xf^{-1}f(x)\|$, if $\diff_x f$ is non-singular, and $\alpha(f,x):=\infty$, otherwise.
    \item \emph{Smale's $\gamma$}: $\gamma(f,x):=\sup_{k\geq 2}\left\|\diff_xf^{-1}\frac{\diff_x^kf}{k!}\right\|^{\frac{1}{k-1}}$, if $\diff_x f$ is non-singular, and $\gamma(f,x):=\infty$, otherwise.
\end{enumerate}
\end{defi}

Recall that if $\diff_xf$ is non-singular, then the \emph{Newton operator}
\[\newton_f:x\mapsto x-\diff_xf^{-1}f(x)\]
is well-defined. For a point $x$, the \emph{Newton sequence} is the sequence $\{\newton_f^k(x)\}$. Note that this sequence is well-defined (i.e., $\newton_f^k(x)$ makes sense for all $k$) if and only if $\diff_{\newton_x^k(x)}f$ is non-singular, because otherwise $\newton_f^k(x)$ will not be defined for some $k$. Also note that
\[\beta(f,x)=\|x-\newton_f(x)\|,\]
i.e., $\beta$ measures the length of a Newton step.

\begin{theo}[Ultrametric $\alpha$/$\gamma$-theorem]\label{thm:Smalealphatheory}
Let $f\in\Pd[n]$ and $x\in\bbF^n$. Then the following are equivalent:
\begin{enumerate}
    \item[($\alpha$)] $\alpha(f,x)<1$.
    \item[($\gamma$)] $\dist(x,f^{-1}(0))<1/\gamma(f,x)$.
\end{enumerate}
Moreover, if any of the above equivalent conditions holds, then the Newton sequence, $\{\newton_f^k(x)\}$, is well-defined and it converges quadratically to a non-singular zero $\zeta$ of $f$. More specifically, for all $k$, the following holds:
\begin{enumerate}
    \item[(a)] $\alpha(f,\newton_f^k(x))\leq \alpha(f,x)^{2^{k}}$.
    \item[(b)] $\beta(f,\newton_f^k(x))\leq \beta(f,x)\alpha(f,x)^{2^{k}}$.
    \item[(c)] $\gamma(f,\newton_f^k(x))\leq \gamma(f,x)$.
    \item[(Q)] $\|\newton_f^k(x)-\zeta\|=\beta(f,\newton_f^k(x))\leq \alpha(f,x)^{2^{k}}\beta(f,x)<\alpha(f,x)^{2^{k}}/\gamma(f,x)$.
\end{enumerate}
\end{theo}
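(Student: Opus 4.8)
The plan is to isolate a single ``derivative transfer'' lemma powered by the ultrametric inequality, after which the theorem is a short induction. I would start from the multilinear Taylor identities for a polynomial map, which for $x,y\in\bbF^n$ and $m\geq 0$ read (all sums finite)
\[
f(y)=\sum_{k\geq 0}\tfrac{\diff_x^kf}{k!}(y-x)^k,\qquad \tfrac{\diff_y^mf}{m!}=\sum_{k\geq m}\binom{k}{m}\tfrac{\diff_x^kf}{k!}(y-x)^{k-m}.
\]
Writing $A_k:=\diff_xf^{-1}\tfrac{\diff_x^kf}{k!}$, so that $\|A_k\|\leq\gamma(f,x)^{k-1}$ by the definition of $\gamma$, the decisive point is that every integer factor in sight ($k$, $\binom{k}{m}$) has absolute value at most $1$: hence for $u:=\gamma(f,x)\|y-x\|<1$ one gets $\|\diff_xf^{-1}\diff_yf-\bbI\|\leq\max_{k\geq 2}u^{k-1}=u<1$ and, for $m\geq 2$, $\|\diff_xf^{-1}\tfrac{\diff_y^mf}{m!}\|\leq\gamma(f,x)^{m-1}$. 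The ultrametric Neumann series then gives that $\diff_yf$ is non-singular with $\|\diff_yf^{-1}\diff_xf\|\leq 1$, and composing yields the transfer lemma: \textbf{if $\gamma(f,x)\|y-x\|<1$ then $\diff_yf$ is non-singular and $\gamma(f,y)\leq\gamma(f,x)$.} (Also $\|\diff_yf\|\leq\|\diff_xf\|$, which I will need later.)

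Next I would prove the one-step Newton estimate. Assume $\alpha(f,x)<1$ and put $x':=\newton_f(x)$, so $\|x'-x\|=\beta(f,x)$ and $\gamma(f,x)\|x'-x\|=\alpha(f,x)<1$; the lemma applies with $y=x'$, giving $\gamma(f,x')\leq\gamma(f,x)$ and $\|\diff_{x'}f^{-1}\diff_xf\|\leq 1$. Since $\diff_xf(x'-x)=-f(x)$, the zeroth and first Taylor terms of $f$ at $x$ cancel, so $f(x')=\sum_{k\geq 2}\tfrac{\diff_x^kf}{k!}(x'-x)^k$, whence $\|\diff_xf^{-1}f(x')\|\leq\max_{k\geq 2}\gamma(f,x)^{k-1}\beta(f,x)^k=\alpha(f,x)\beta(f,x)$ and therefore $\beta(f,x')=\|\diff_{x'}f^{-1}\diff_xf\cdot\diff_xf^{-1}f(x')\|\leq\alpha(f,x)\beta(f,x)$; multiplying by $\gamma(f,x')\leq\gamma(f,x)$ gives $\alpha(f,x')\leq\alpha(f,x)^2$. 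Iterating, $\alpha(f,\newton_f^k(x))\leq\alpha(f,x)^{2^k}<1$ for every $k$, so the Newton sequence is well-defined at every step, and the estimates (a), (b), (c) follow ((b) from the telescoping product $\prod_j\alpha(f,\newton_f^j(x))$).

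For convergence I would observe that $\|\newton_f^{k+1}(x)-\newton_f^k(x)\|=\beta(f,\newton_f^k(x))$ is strictly decreasing and tends to $0$; ultrametricity makes the sequence Cauchy, and completeness of $\bbF$ gives a limit $\zeta$ with $\|\newton_f^k(x)-\zeta\|=\beta(f,\newton_f^k(x))$ (isosceles-triangle principle), which is exactly (Q). From $\|f(\newton_f^k(x))\|\leq\|\diff_{\newton_f^k(x)}f\|\,\beta(f,\newton_f^k(x))\to 0$ (operator norms stay bounded by $\|\diff_xf\|$ via the lemma) we get $f(\zeta)=0$, and the lemma with $y=\zeta$ shows $\zeta$ is non-singular with $\gamma(f,\zeta)\leq\gamma(f,x)$; then $\|\newton_f^{k+1}(x)-\zeta\|=\beta(f,\newton_f^{k+1}(x))\leq\alpha(f,\newton_f^k(x))\beta(f,\newton_f^k(x))\leq\gamma(f,x)\|\newton_f^k(x)-\zeta\|^2$ is quadratic convergence. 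For the equivalence, ($\alpha$)$\Rightarrow$($\gamma$) is immediate, since $\dist(x,f^{-1}(0))\leq\|x-\zeta\|=\beta(f,x)=\alpha(f,x)/\gamma(f,x)<1/\gamma(f,x)$; conversely, ($\gamma$) forces $\diff_xf$ non-singular (otherwise $1/\gamma(f,x)=0$), and if $\zeta$ is a zero with $u:=\gamma(f,x)\|\zeta-x\|<1$, expanding $0=f(\zeta)$ gives $\diff_xf^{-1}f(x)=-(\zeta-x)-\sum_{k\geq 2}A_k(\zeta-x)^k$ with $\|\sum_{k\geq 2}A_k(\zeta-x)^k\|\leq u\|\zeta-x\|<\|\zeta-x\|$, so by the equality case of the ultrametric inequality $\beta(f,x)=\|\zeta-x\|$ and hence $\alpha(f,x)=u<1$.

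I expect the main obstacle to be the bookkeeping in the transfer lemma: pinning down the multilinear Taylor expansion of $\diff_y^mf$ around $x$ and verifying that the only scalars introduced are integers (hence of absolute value $\leq 1$), together with handling the degenerate cases where $\diff_xf$ or some $\diff_{\newton_f^k(x)}f$ is a priori singular. Everything downstream is a clean induction precisely because, in contrast to the archimedean setting, the ultrametric inequality removes the bad constants — Smale's classical $\psi$-function collapses to the constant $1$.
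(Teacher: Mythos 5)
Your proof is correct and follows essentially the same route as the paper: an ultrametric derivative-transfer lemma controlling $\gamma$ along a ball of radius $1/\gamma(f,x)$ (this is the paper's Lemma~\ref{lem:inversegamma} together with Proposition~\ref{prop:variationSmaleparameters}(c)), a one-step Newton contraction $\alpha(f,\newton_f(x))\leq\alpha(f,x)^2$ (Proposition~\ref{prop:variationNewton}), and then induction plus the ultrametric Cauchy/isosceles argument for convergence and (Q), with the reverse direction ($\gamma$)$\Rightarrow$($\alpha$) obtained exactly as in the paper by expanding $0=f(\zeta)$ and invoking the equality case of the strong triangle inequality. Your write-up is in fact slightly more careful on two points the paper leaves implicit: you explicitly verify that the Newton limit $\zeta$ is a non-singular zero of $f$ (via $\|f(\newton_f^k(x))\|\leq\|\diff_{\newton_f^k(x)}f\|\,\beta(f,\newton_f^k(x))\to 0$ and one more application of the transfer lemma), and you note that the telescoping product in (b) actually delivers the exponent $2^k-1$ rather than $2^k$, which is the exponent one can truthfully claim there (the statement's $2^k$ is an off-by-one slip, as the $k=0$ case already shows).
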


The proof of the above theorem, relies in the following lemmas and propositions.

\begin{lem}\label{lem:inversegamma}
Let $f\in\Pd[n]$ and $x,y\in\bbF^n$. If $\gamma(f,x)\|x-y\|<1$, then $\diff_y f$ is non-singular and
\[\|\diff_y f^{-1}\diff_xf\|=1.\]
\end{lem}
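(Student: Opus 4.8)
The plan is to Taylor-expand $\diff_y f$ about the point $x$, left-multiply by $\diff_x f^{-1}$, and show that the result is the identity plus an operator of norm strictly less than $1$; the ultrametric inequality then delivers both conclusions simultaneously. If $x=y$ the claim is trivial, so I assume $x\neq y$; the hypothesis $\gamma(f,x)\|x-y\|<1$ then forces $\gamma(f,x)<\infty$, i.e.\ $\diff_x f$ is non-singular, so the Newton operator and the quantities below all make sense. Since $f$ is polynomial, Taylor's formula at $x$ is a finite sum, and differentiating it in $y$ gives
\[
\diff_y f \;=\; \diff_x f \;+\; \sum_{k\ge 2} k\,\frac{\diff_x^k f}{k!}\,\big((y-x)^{k-1},\,\cdot\,\big),
\]
where $\frac{\diff_x^k f}{k!}\big((y-x)^{k-1},\,\cdot\,\big)$ is the linear map $v\mapsto \frac1{k!}\diff_x^k f(y-x,\dots,y-x,v)$. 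Composing with $\diff_x f^{-1}$ yields $\diff_x f^{-1}\diff_y f=\mathrm{Id}+R$, where $R:=\sum_{k\ge 2} k\,\diff_x f^{-1}\frac{\diff_x^k f}{k!}\big((y-x)^{k-1},\,\cdot\,\big)$.

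Next I would bound $\|R\|$. Because the absolute value on $\bbF$ is non-archimedian, $|k|\le 1$ for every integer $k$; and the partial evaluation of a multilinear map at fixed vectors has operator norm at most the multilinear norm times the product of the norms of those vectors. Hence each summand of $R$ has operator norm at most $\big\|\diff_x f^{-1}\frac{\diff_x^k f}{k!}\big\|\,\|y-x\|^{k-1}\le \big(\gamma(f,x)\|y-x\|\big)^{k-1}$, the last inequality being exactly the definition of $\gamma(f,x)$. By the strong triangle inequality,
\[
\|R\|\;\le\;\max_{k\ge 2}\big(\gamma(f,x)\|y-x\|\big)^{k-1}\;=\;\gamma(f,x)\|y-x\|\;<\;1,
\]
the maximum being attained at $k=2$ precisely because $\gamma(f,x)\|y-x\|<1$.

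Finally I would harvest the consequences. Since $\|R\|<1$ and the space of linear endomorphisms of $\bbF^n$, being finite-dimensional over the complete field $\bbF$, is complete, the Neumann series $\sum_{j\ge 0}(-R)^j$ converges and inverts $\mathrm{Id}+R$; hence $\diff_y f=\diff_x f\,(\mathrm{Id}+R)$ is non-singular. Moreover $\|\mathrm{Id}+R\|=1$ because $\|\mathrm{Id}\|=1>\|R\|$, and likewise $\big\|(\mathrm{Id}+R)^{-1}\big\|=\big\|\mathrm{Id}+\sum_{j\ge 1}(-R)^j\big\|=1$; since $\diff_y f^{-1}\diff_x f=(\diff_x f^{-1}\diff_y f)^{-1}=(\mathrm{Id}+R)^{-1}$, this is exactly the asserted identity $\|\diff_y f^{-1}\diff_x f\|=1$.

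The main obstacle is bookkeeping rather than estimation: tracking the combinatorial factor $k$ produced when differentiating the degree-$k$ Taylor term (and recording that it is harmless since $|k|\le 1$), and verifying that the induced norm on $k$-linear maps satisfies $\|A((y-x)^{k-1},\,\cdot\,)\|\le\|A\|\,\|y-x\|^{k-1}$, which is immediate from its definition. No genuine analytic estimate is required---here the ultrametric inequality plays the role that a convergent geometric series would play over $\bbR$ or $\bbC$.
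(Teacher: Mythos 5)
Your proof is correct and follows essentially the same route as the paper's: Taylor-expand $\diff_y f$ about $x$, left-compose with $\diff_x f^{-1}$ to write $\diff_x f^{-1}\diff_y f = \mathbb{I}+R$ with $\|R\|\le \gamma(f,x)\|y-x\|<1$, invert via the Neumann series, and read off $\|\diff_y f^{-1}\diff_x f\|=1$ from the equality case of the ultrametric inequality. The only difference is cosmetic: you track the combinatorial factor $k$ explicitly (noting $|k|\le 1$), while the paper absorbs it by writing the $k$-th term directly as $\diff_x f^{-1}\diff_x^{k+1}f(y-x,\ldots,y-x)/k!$.
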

\begin{prop}[Variations of Smale's parameters]\label{prop:variationSmaleparameters}
Let $f\in\Pd[n]$ and $x,y\in\bbF^n$. If $\gamma(f,x)\|x-y\|<1$, then:
\begin{enumerate}[(a)]
    \item $\alpha(f,y)\leq \max\{\alpha(f,x),\gamma(f,x)\|y-x\|\}$. Moreover, if $\|y-x\|<\beta(f,x)$, $\alpha(f,y)=\alpha(f,x)$.
    \item $\beta(f,y)\leq \max\{\beta(f,x),\|y-x\|\}$. Moreover, if $\|y-x\|<\beta(f,x)$, $\beta(f,y)=\beta(f,x)$.
    \item $\gamma(f,y)=\gamma(f,x)$.
\end{enumerate}
\end{prop}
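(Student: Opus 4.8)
The plan is to reduce everything to the Taylor expansions of $f$, $\diff f$ and the higher derivatives around $x$, evaluated at $y$, and then to read off the three claims from the non-archimedean triangle inequality together with its equality case. Throughout I would write $\gamma:=\gamma(f,x)$ and $u:=\gamma\|x-y\|<1$ (the case $\gamma=0$, i.e.\ $f$ affine, being trivial), and I would use that $f$ is polynomial, so all the sums below are finite and the supremum defining $\gamma(f,x)$ is attained at some index $j_0\ge 2$.

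The first --- and I expect the only delicate --- step is to upgrade Lemma~\ref{lem:inversegamma} to the statement that conjugation by $\diff_yf^{-1}\diff_xf$ preserves ultranorms. From the expansion $\diff_yf=\sum_{k\ge 1}\binom{k}{1}\tfrac{\diff_x^kf}{k!}(y-x)^{\otimes(k-1)}$, the bounds $\bigl|\binom{k}{1}\bigr|\le 1$ and $\|\diff_xf^{-1}\tfrac{\diff_x^kf}{k!}\|\le\gamma^{k-1}$ give $\diff_xf^{-1}\diff_yf=I+N$ with $\|N\|\le u<1$; hence in the ultrametric setting both $\diff_xf^{-1}\diff_yf=I+N$ and its inverse $\diff_yf^{-1}\diff_xf=\sum_{m\ge 0}(-N)^m$ have ultranorm exactly $1$. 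A two-line argument ($\|Av\|\le\|v\|$ and $\|v\|=\|A^{-1}Av\|\le\|Av\|$ applied to $A=\diff_yf^{-1}\diff_xf$) then shows $\|(\diff_yf^{-1}\diff_xf)\,A\|=\|A\|$ for every multilinear form or vector $A$. In particular $\beta(f,y)=\|\diff_xf^{-1}f(y)\|$ and $\gamma(f,y)=\sup_{j\ge 2}\|\diff_xf^{-1}\tfrac{\diff_y^jf}{j!}\|^{1/(j-1)}$, i.e.\ I may compute both Smale parameters at $y$ using $\diff_xf^{-1}$ in place of $\diff_yf^{-1}$.

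For (c) I would expand $\tfrac{\diff_y^jf}{j!}=\sum_{k\ge j}\binom{k}{j}\tfrac{\diff_x^kf}{k!}(y-x)^{\otimes(k-j)}$; composing with $\diff_xf^{-1}$, the $k=j$ term is $\diff_xf^{-1}\tfrac{\diff_x^jf}{j!}$ (norm $\le\gamma^{j-1}$) and each $k>j$ term has norm $\le\gamma^{k-1}\|y-x\|^{k-j}=\gamma^{j-1}u^{k-j}\le\gamma^{j-1}u<\gamma^{j-1}$. Thus $\gamma(f,y)\le\gamma$; for the reverse inequality I use the index $j_0$ attaining the maximum in $\gamma(f,x)$, so that its $k=j_0$ term has norm $\gamma^{j_0-1}$, strictly larger than every tail term, and the equality case of the ultrametric inequality forces $\|\diff_xf^{-1}\tfrac{\diff_y^{j_0}f}{j_0!}\|=\gamma^{j_0-1}$, whence $\gamma(f,y)=\gamma(f,x)$. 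For (b) I would do the same with $f(y)=f(x)+\diff_xf(y-x)+\sum_{k\ge 2}\tfrac{\diff_x^kf}{k!}(y-x)^{\otimes k}$: applying $\diff_xf^{-1}$, the three blocks of terms have norms $\beta(f,x)$, $\|y-x\|$, and $\le\|y-x\|u^{k-1}<\|y-x\|$, so the ultrametric inequality gives $\beta(f,y)\le\max\{\beta(f,x),\|y-x\|\}$, with equality $\beta(f,y)=\beta(f,x)$ as soon as $\|y-x\|<\beta(f,x)$ because then the $f(x)$-block strictly dominates.

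Finally (a) is immediate from the previous two: $\alpha(f,y)=\beta(f,y)\gamma(f,y)=\beta(f,y)\gamma$ by (c), so (b) yields $\alpha(f,y)\le\gamma\max\{\beta(f,x),\|y-x\|\}=\max\{\alpha(f,x),\gamma(f,x)\|y-x\|\}$, and the equality case of (b) propagates verbatim to the equality case of (a). The only place where real care is needed is the ultranorm-isometry claim of the second paragraph; everything after that is bookkeeping with finite Taylor sums and the strong triangle inequality.
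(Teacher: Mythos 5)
Your proof is correct and follows essentially the same path as the paper: Taylor expansion of $f$, $\diff f$ and the higher derivatives around $x$, the strong triangle inequality for the upper bounds, and Lemma~\ref{lem:inversegamma} to pass between $\diff_xf^{-1}$ and $\diff_yf^{-1}$. The one noteworthy organizational difference is in how the equality cases are handled. You first strengthen Lemma~\ref{lem:inversegamma} from $\|\diff_yf^{-1}\diff_xf\|=1$ to the statement that left-composition with $\diff_yf^{-1}\diff_xf$ is an ultranorm isometry (this needs $\|\diff_xf^{-1}\diff_yf\|=1$ too, which is implicit in the lemma's proof via $\diff_xf^{-1}\diff_yf=I+N$, $\|N\|<1$); you can then compute $\beta(f,y)$ and $\gamma(f,y)$ on the nose as $\|\diff_xf^{-1}f(y)\|$ and $\sup_j\|\diff_xf^{-1}\tfrac{\diff_y^jf}{j!}\|^{1/(j-1)}$, and read the equality cases of (b) and (c) directly from the equality case of the strong triangle inequality in a single expansion (one dominant term, all others strictly smaller). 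The paper instead proves only the inequalities $\gamma(f,y)\le\gamma(f,x)$ and $\beta(f,y)\le\max\{\beta(f,x),\|y-x\|\}$, observes that once $\gamma(f,y)\le\gamma(f,x)$ the hypothesis is symmetric under swapping $x$ and $y$, and then derives the equalities by a back-and-forth argument ($\le$ in both directions). Both routes are valid and use the same ingredients; yours is arguably a touch cleaner at the cost of the easy lemma upgrade, while the paper's avoids that upgrade at the cost of the symmetry step. For (c) the paper also does not need to single out the extremal index $j_0$ as you do, since the swap argument gives the reverse inequality for free.
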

\begin{prop}[Variations along Newton step]\label{prop:variationNewton}
Let $f\in\Pd[n]$ and $x\in\bbF^n$. If $\alpha(f,x)<1$, then:
\begin{enumerate}[(a)]
    \item $\alpha(f,\newton_f(x))\leq \alpha(f,x)^2$.
    \item $\beta(f,\newton_f(x))\leq \alpha(f,x)\beta(f,x)$.
    \item $\gamma(f,\newton_f(x))=\gamma(f,x)$.
\end{enumerate}
In particular, $\newton_f(\newton_f(x))$ is well-defined.
\end{prop}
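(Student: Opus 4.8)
The plan is to run the classical Smale argument, which in the ultrametric world actually becomes cleaner. First I would set $x_1:=\newton_f(x)=x-\diff_x f^{-1}f(x)$, so that $\|x_1-x\|=\beta(f,x)$ and therefore $\gamma(f,x)\|x_1-x\|=\beta(f,x)\gamma(f,x)=\alpha(f,x)<1$ by hypothesis. This inequality is exactly the input for Lemma~\ref{lem:inversegamma}, which gives that $\diff_{x_1}f$ is non-singular --- so $\beta(f,x_1),\gamma(f,x_1),\alpha(f,x_1)$ and $\newton_f(x_1)$ are all defined, which already disposes of the final ``in particular'' clause --- and moreover that $\|\diff_{x_1}f^{-1}\diff_x f\|=1$. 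Part (c) is then immediate: since $\gamma(f,x)\|x_1-x\|<1$, Proposition~\ref{prop:variationSmaleparameters}(c) yields $\gamma(f,\newton_f(x))=\gamma(f,x)$.

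The crux is (b), since the bound $\beta(f,x_1)\le\beta(f,x)$ coming straight from Proposition~\ref{prop:variationSmaleparameters}(b) is too weak; I need the refined estimate that uses the defining equation of a Newton step. The key step: expand $f$ about $x$ by its Taylor formula (finite, hence exact, as $f$ is polynomial) and evaluate at $x_1$,
\[
f(x_1)=f(x)+\diff_x f(x_1-x)+\sum_{k\ge 2}\frac{\diff_x^k f}{k!}(x_1-x)^k .
\]
By definition of $x_1$ the first two terms cancel, so after applying $\diff_x f^{-1}$ and using the ultrametric inequality together with the $k$-linear operator norm,
\[
\big\|\diff_x f^{-1}f(x_1)\big\|\le\max_{k\ge 2}\Big\|\diff_x f^{-1}\tfrac{\diff_x^k f}{k!}\Big\|\,\|x_1-x\|^k\le\|x_1-x\|\max_{k\ge 2}\big(\gamma(f,x)\|x_1-x\|\big)^{k-1}.
\]
Here the ultrametric setting pays off: because $\gamma(f,x)\|x_1-x\|=\alpha(f,x)<1$, the maximum over $k\ge 2$ is attained at $k=2$ and equals $\alpha(f,x)$, so $\|\diff_x f^{-1}f(x_1)\|\le\alpha(f,x)\beta(f,x)$. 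Then, writing $\diff_{x_1}f^{-1}f(x_1)=(\diff_{x_1}f^{-1}\diff_x f)(\diff_x f^{-1}f(x_1))$ and using $\|\diff_{x_1}f^{-1}\diff_x f\|=1$ from Lemma~\ref{lem:inversegamma}, I get $\beta(f,x_1)\le\alpha(f,x)\beta(f,x)$, which is (b).

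Finally, (a) follows by combining (b) and (c): $\alpha(f,\newton_f(x))=\beta(f,x_1)\gamma(f,x_1)=\beta(f,x_1)\gamma(f,x)\le\alpha(f,x)\beta(f,x)\gamma(f,x)=\alpha(f,x)^2$. I expect the only genuinely delicate point to be the bookkeeping with operator and multilinear ultranorms --- in particular making sure that the ultrametric inequality is applied correctly to the (finite) Taylor sum, and that the factor $\|\diff_{x_1}f^{-1}\diff_x f\|=1$ really does let one transfer a bound on $\diff_x f^{-1}f(x_1)$ to a bound on $\diff_{x_1}f^{-1}f(x_1)=\beta(f,x_1)$; everything else is routine. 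One should also note in passing that $\alpha(f,\newton_f(x))\le\alpha(f,x)^2<1$, so $\newton_f(x)$ again lies in the region where the proposition applies --- this is what makes its iterated use in the proof of Theorem~\ref{thm:Smalealphatheory} legitimate.
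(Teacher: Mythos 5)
Your argument is correct and follows the same route as the paper: part~(c) by invoking Proposition~\ref{prop:variationSmaleparameters}(c), part~(b) by the Taylor expansion of $f$ about $x$ with cancellation of the linear term after a Newton step, transported through $\|\diff_{\newton_f(x)}f^{-1}\diff_x f\|=1$ from Lemma~\ref{lem:inversegamma}, and part~(a) by combining (b) and (c). The only cosmetic difference is that you re-derive the Taylor estimate inline, whereas the paper cites the displayed inequality~\eqref{eq:ineqmixedbeta1} already established in the proof of Proposition~\ref{prop:variationSmaleparameters} and then specializes it to $y=\newton_f(x)$ (where the first term of the maximum vanishes); the computation is identical, and your final bound $\alpha(f,x)\beta(f,x)=\gamma(f,x)\beta(f,x)^2$ agrees with the paper's.
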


\begin{proof}[Proof of Theorem~\ref{thm:Smalealphatheory}]
If $\alpha(f,x)<1$, then, using induction and Proposition~\ref{prop:variationNewton}, we obtain that (a), (b) and (c) hold. But then the sequence $\{\newton_f^k(x)\}$ converges since
\[\lim_{k\to\infty}\|\newton_f^{k+1}(x)-\newton_f^k(x)\|=0\]
and so it is a Cauchy sequence. Finally, (Q) follows from noting that for $l\geq k$
\[
\|\newton_f^l(x)-\newton_f^k(x)\|\leq \alpha(f,x)^{2^{l-k}}\beta(f,\newton_f^k(x))
\]
and taking infinite sum together with the equality case of the ultrametric inequality. In particular, we have $\dist(x,f^{-1}(0))=\|x-\zeta\|=\beta(f,x)<1/\gamma(f,x)$.

For the other direction, if $\gamma(f,x)=\infty$, then $\alpha(f,x)=\infty$ and $\dist(x,f^{-1}(0))<0$ cannot hold. So we focus in the case when $\gamma(f,x)<\infty$. Let $\zeta\in\bbF^n$ be a zero of $f$ such that $\dist(x,\zeta)<1/\gamma(f,x)$. Then
\[
0=f(\zeta)=f(x)+\sum_{k=1}^{\infty}\frac{\diff_x^kf}{k!}(\zeta-x,\ldots,\zeta-x).
\]
Hence
\[
-\diff_xf^{-1}f(x)=\zeta-x+\sum_{k=2}^{\infty}\diff_xf^{-1}\frac{\diff_x^kf}{k!}(\zeta-x,\ldots,\zeta-x).
\]
Now, the higher order terms satisfy that
\[
\left\|\diff_xf^{-1}\frac{\diff_x^kf}{k!}(\zeta-x,\ldots,\zeta-x)\right\|\leq \left(\gamma(f,x)\|\zeta-x\|\right)^{k-1}\|\zeta-x\|<\|\zeta-z\|
\]
and so, by the equality case of the ultrametric inequality,
\[
\beta(f,x)=\|\zeta-x\|<1/\gamma(f,x),
\]
as desired.
\end{proof}
\begin{proof}[Proof of Lemma~\ref{lem:inversegamma}]
We have that
\[\diff_xf^{-1}\diff_yf=\bbI+\sum_{k=1}^{\infty}\diff_xf^{-1}\frac{\diff_x^{k+1}f(y-x,\ldots,y-x)}{k!}.\]
Now, under the given assumption,
\[\left\|\diff_xf^{-1}\frac{\diff_x^{k+1}f(y-x,\ldots,y-x)}{k!}\right\|\leq \left(\gamma(f,x)\|y-x\|\right)^{k-1}<1\]
for $k\geq 2$, and so, by the the ultrametric inequality, $\|\diff_xf^{-1}\diff_yf-\bbI\|<1$. Therefore
\[
\sum_{k=0}^\infty (\bbI-\diff_xf^{-1}\diff_yf)^k
\]
converges, and it does so to the inverse of $\diff_xf^{-1}\diff_yf$. Since, by assumption $\diff_xf$ is invertible, so it is $\diff_yf$. 

Finally, by the invertibility of $\diff_yf$, we have that
\[
\diff_yf^{-1}\diff_xf=\sum_{k=0}^\infty (\bbI-\diff_xf^{-1}\diff_yf)^k,
\]
and so, by the equality case of the ultrametric inequality, $\|\diff_yf^{-1}\diff_xf\|=1$, as desired.
\end{proof}

\begin{proof}[Proof of Proposition~\ref{prop:variationSmaleparameters}]
We first prove (c) and then (b). (a) follows from (b) and (c) immediately.

(c) We note that under the given assumption, for $k\geq 2$,
\begin{equation}\label{eq:ineqmixedgamma1}
    \left\|\diff_xf^{-1}\frac{\diff_y^kf}{k!}\right\|\leq\gamma(f,x)^{k-1}.
\end{equation}
For this, we expand the Taylor series of $\frac{\diff_y^kf}{k!}$ (with respect $y$) and note that its $l$th term is dominated by
\[
\gamma(f,x)^{k+l-1}\|y-x\|^l,
\]
which, by the ultrametric inequality, gives the above inequality. In this way, for $k\geq 2,$
\[
\left\|\diff_yf^{-1}\frac{\diff_y^kf}{k!}\right\|\leq \left\|\diff_yf^{-1}\diff_xf\right\|\left\|\diff_xf^{-1}\frac{\diff_y^kf}{k!}\right\|\leq \gamma(f,x)^{k-1}
\]
by Lemma~\ref{lem:inversegamma} and~\eqref{eq:ineqmixedgamma1}. Thus $\gamma(f,y)\leq \gamma(f,x)$. Now, due to this, the hypothesis $\gamma(f,y)\|x-y\|<1$ holds, and so, by the same argument, $\gamma(f,x)\leq\gamma(f,y)$, which is the desired equality.

(b) Arguing as in (c), we can show that
\begin{equation}\label{eq:ineqmixedbeta1}
    \left\|\diff_xf^{-1}f(y)\right\|\leq \max\{\|\diff_xf^{-1}f(x)+y-x\|,\gamma(f,x)\|y-x\|^2\}
\end{equation}
by noting that the general term (of the Taylor series of $\diff_xf^{-1}f(y)$ with respect $y$) is dominated by $\gamma(f,x)^{k-1}\|y-x\|^k<\gamma(f,x)\|y-x\|^2$. Now, by Lemma~\ref{lem:inversegamma} and~\eqref{eq:ineqmixedbeta1},
\begin{multline*}
    \beta(f,y)\leq \left\|\diff_yf^{-1}\diff_xf\right\|\left\|\diff_xf^{-1}f(y)\right\|\\
    \leq \max\{\|\diff_xf^{-1}f(x)+y-x\|,\gamma(f,x)\|y-x\|^2\}\leq \max\{\beta(f,x),\|y-x\|\}.
\end{multline*}
For the equality case, note that, by the same argument, we have
\[
\beta(f,x)\leq\max\{\beta(f,y),\|y-x\|\}=\beta(f,y)
\]
where the equality on the right-hand side follows from $\beta(f,x)>\|y-x\|$.
\end{proof}
\begin{proof}[Proof of Proposition~\ref{prop:variationNewton}]
(c) follows from Proposition~\ref{prop:variationSmaleparameters} (c).

(b). We use~\eqref{eq:ineqmixedbeta1} in the Proof of Proposition~\ref{prop:variationSmaleparameters}. By~\eqref{eq:ineqmixedbeta1} and Lemma~\ref{lem:inversegamma},
\[
\beta(f,\newton_f(x))\leq \max\{\|D_xf^{-1}f(x)+N_f(x)-x\|,\gamma(f,x)\|\newton_f(x)-x\|\}.
\]
Now, $\newton_f(x)-x=-\diff_xf^{-1}f(x)$, so the above becomes
\[
\beta(f,\newton_f(x))\leq \max\{0,\gamma(f,x)\beta(f,x)^2\},
\]
which gives the desired claim.

(a) follows from combining (b) and (c).
\end{proof}

From the proof of Theorem~\ref{thm:Smalealphatheory}, we can get the following proposition that will be useful later.

\begin{prop}\label{prop:reversealphatheory}
Let $f\in\Pd[n]$ and $x\in\bbF^n$. If for some $r\in (0,1/\gamma(f,x)]$,
\[B(x,r)\cap f^{-1}(0)\neq\varnothing,\]
then
\[\beta(f,x)=\dist(x,f^{-1}(x))< r.\]
\end{prop}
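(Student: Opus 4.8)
The starting observation is that the hypotheses already pin down a lot. If $r$ can be chosen in $(0,1/\gamma(f,x)]$, then in particular $\gamma(f,x)<\infty$, so $\diff_xf$ is non-singular and $\beta(f,x)$ is a genuine finite quantity. Picking any $\zeta\in B(x,r)\cap f^{-1}(0)$, we get $\|\zeta-x\|<r\leq 1/\gamma(f,x)$, hence $\gamma(f,x)\|\zeta-x\|<1$. The plan is to prove $\beta(f,x)=\|\zeta-x\|$ by reusing the Taylor-expansion computation that already appears in the $(\gamma)\Rightarrow(\alpha)$ half of the proof of Theorem~\ref{thm:Smalealphatheory}.

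Concretely, I would expand $0=f(\zeta)$ in a Taylor series about $x$ and apply $\diff_xf^{-1}$ to obtain
\[
-\diff_xf^{-1}f(x)=(\zeta-x)+\sum_{k\geq 2}\diff_xf^{-1}\frac{\diff_x^kf}{k!}(\zeta-x,\ldots,\zeta-x).
\]
By the definition of $\gamma(f,x)$, the $k$-th summand on the right-hand side has norm at most $\bigl(\gamma(f,x)\|\zeta-x\|\bigr)^{k-1}\|\zeta-x\|<\|\zeta-x\|$, so the term $\zeta-x$ strictly dominates the tail. The equality case of the ultrametric inequality then gives $\beta(f,x)=\|\diff_xf^{-1}f(x)\|=\|\zeta-x\|$, and combining this with $\|\zeta-x\|<r$ yields $\beta(f,x)<r$.

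It then remains to identify $\beta(f,x)$ with $\dist(x,f^{-1}(0))$. One inequality is immediate: $\dist(x,f^{-1}(0))\leq\|\zeta-x\|=\beta(f,x)$. For the reverse I would observe that the displayed identity applies verbatim to \emph{any} zero $\eta$ of $f$ with $\gamma(f,x)\|\eta-x\|<1$, forcing $\|\eta-x\|=\beta(f,x)$; and any remaining zero $\eta$ satisfies $\|\eta-x\|\geq 1/\gamma(f,x)>\beta(f,x)$, the last inequality because $\beta(f,x)=\|\zeta-x\|<1/\gamma(f,x)$. Hence every zero of $f$ is at distance at least $\beta(f,x)$ from $x$, so $\dist(x,f^{-1}(0))\geq\beta(f,x)$, completing the equality.

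I do not expect a genuine obstacle: the entire content is already contained inside the proof of Theorem~\ref{thm:Smalealphatheory}, and one could legitimately phrase the proposition as a one-line corollary of that argument. The only point requiring a moment's care is the ball convention — the computation needs the \emph{strict} bound $\gamma(f,x)\|\zeta-x\|<1$, which is precisely why $B(x,r)$ should be read as the open ball, and why it is harmless that the admissible interval for $r$ is closed at its right endpoint $1/\gamma(f,x)$.
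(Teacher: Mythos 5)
Your proof is correct and follows the same route as the paper's: extract $\beta(f,x)=\|\zeta-x\|$ from the Taylor-expansion identity already established in the $(\gamma)\Rightarrow(\alpha)$ direction of Theorem~\ref{thm:Smalealphatheory}, then use $\|\zeta-x\|<r$. In fact you are slightly more complete than the paper — the printed proof stops after citing $\beta(f,x)=\|\zeta-x\|$, whereas you also justify why this equals $\dist(x,f^{-1}(0))$ by showing every zero within $1/\gamma(f,x)$ of $x$ lies at exactly that distance and every other zero lies farther away.
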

\begin{proof}
Let $\zeta\in B(x,r)\cap f^{-1}(0)$. Under the given hypothesis, we proved that
\[\beta(f,x)=\|\zeta-x\|\]
in the proof of Theorem~\ref{thm:Smalealphatheory}. This is the desired claim.
\end{proof}

\end{document}